
\documentclass{amsart}
\usepackage[all]{xy}
\SelectTips{cm}{}
\usepackage{amsmath}
\usepackage{amssymb}
\usepackage{amscd}
\usepackage{amsthm}
\usepackage{amsfonts}
\usepackage{amsxtra}


\theoremstyle{plain}

\newtheorem{Thm}{Theorem}[section]
\newtheorem{Lem}[Thm]{Lemma}
\newtheorem{Pro}[Thm]{Proposition}
\newtheorem{Cor}[Thm]{Corollary}

\newtheorem{bigthm}{Theorem}

\newtheorem{bigadd}[bigthm]{Addendum}

\theoremstyle{definition}
\newtheorem{Def}[Thm]{Definition}
\newtheorem{Exa}[Thm]{Example}

\newtheorem{Rem}[Thm]{Remark}

\newtheorem*{Rem-intro}{Remark}
\DeclareMathOperator{\Ad}{Ad}
\DeclareMathOperator{\Pad}{Pad}


\newcommand{\ad}{{\mathrm{ad}}}

\newcommand{\Dirlim}{\varinjlim}
\newcommand{\Invlim}{\varprojlim}

\newcommand{\ZZ}{{\mathbb{Z}}}
\newcommand{\QQ}{{\mathbb{Q}}}

\newcommand{\CC}{{\mathbb{C}}}
\newcommand{\GG}{{\mathcal{G}}}

\newcommand{\idc}{_{\circ}}

\def\:{\colon\!}

\begin{document}

\title[Continuous trace $C^*$-algebras and gauge groups]
{ Continuous trace $C^\ast$-algebras, gauge groups and rationalization }
\author[Klein]{John R. Klein}
\address{Department of Mathematics,
     Wayne State University,
     Detroit MI 48202}
\email{klein@math.wayne.edu}
\author[Schochet]{Claude L.~Schochet}
\address{Department of Mathematics,
     Wayne State University,
     Detroit MI 48202}
\email{claude@math.wayne.edu}
\author[Smith]{Samuel~B.~Smith}
\address{Department of Mathematics,
     Saint Joseph's University,
     Philadelphia PA 19131}
\email{smith@sju.edu}
\thanks{The first   author is partially supported by the National Science Foundation}
\keywords{continuous trace $C^*$-algebra, section space,
gauge group, projective gauge group,
 rational $H$-space, topological group, localization}
\subjclass[2000]{46J05, 46L85, 55P62, 54C35, 55P15, 55P45}
\begin{abstract}
Let $\zeta$   be  an   $n$-dimensional complex  matrix bundle   over a
compact metric space
$X$ and let $A_\zeta$ denote the $C^*$-algebra of sections of this
bundle.
We determine the rational homotopy type as an $H$-space
of $UA_\zeta$, the group of unitaries  of $A_\zeta$.
The answer turns out to be
independent of the bundle $\zeta $ and
depends only upon $n$ and the rational cohomology of $X$.
We prove analogous
results for the gauge group and the projective gauge group of a
  principal bundle over a compact metric space $X$.

\end{abstract}
\maketitle
\setcounter{tocdepth}{1}
\tableofcontents

 \section{Introduction}

We analyze the rational homotopy theory
of certain topological
groups arising  from bundles   over a compact metric space $X$.  Our results are  motivated by the following situation. Let 
$U_n$ be the unitary group of $n\times n$ matrices, and let $PU_n$ be
the group given by taking the quotient of $U_n$ with its center.
Let $\zeta \: T \to X$ be a principal $PU_n$-bundle over $X$, let $PU_n$ act on
 $M_n = M_n(\CC)$ by conjugation and let
$$
 T \times_{PU_n} M_n  \to X
$$
be the associated $n$-dimensional complex matrix bundle. Define
 $A_\zeta$ to be the set of continuous sections of the latter.
These sections
have natural pointwise addition, multiplication, and $*$-operations
and give $A_\zeta $ the structure of a  unital $C^*$-algebra. The algebra
$A_\zeta$ is called an {\emph{$n$-homogeneous $C^*$-algebra}}
and  is the most general
 unital  {\emph{continuous trace}} $C^*$-algebra as studied, for
instance, in  the book of Raeburn and Williams \cite{RW}.
Let $UA_\zeta $ denote the topological group of unitaries of $A_\zeta $.
Our first main result describes the rational homotopy type of $UA_\zeta.$

Recall
that, from the point of view of homotopy theory, the simplest
groups
are  the Eilenberg-Mac\,Lane spaces $K(\pi , n)$  with multiplication
given by
$$
\begin{CD}
K(\pi , n) \times K(\pi , n) \simeq K(\pi \times \pi , n)
@> K(\text{\rm multiply}) >>  K(\pi , n).
\end{CD}
$$
Here $\pi$ is an abelian group and the space $K(\pi, n)$ satisfies $\pi_i(K(\pi, n)) = \pi$ for $i =n$ and $\pi_i(K(\pi, n)) = 0$ for 
$i \neq n.$   Only some of the constructions of a $K(\pi, n)$
yield a bona fide topological group, but all yield an $H$-space; that is,
a space with continuous binary operation and two sided unit. However, this
discrepancy is not hard to rectify: up to homotopy
all of these $H$-space structures on Eilenberg-Mac\,Lane spaces
lift to topological group structures in the
sense that there is a topological group $G$ and a homotopy equivalence to the given
$K(\pi, n)$ which preserves the multiplication up to homotopy.

In fact, the $H$-space structure on a given
Eilenberg-Mac\,Lane
space is unique up to multiplicative equivalence and  is
{\em homotopy commutative}.
A  product $\prod_{j \geq 1} K(\pi_j, j)$ of Eilenberg-Mac\,Lane spaces
also has a  preferred  $H$-space structure
given by the product of the structures on the factors.  This structure, which we refer to as the
{\em standard multiplication,} is also homotopy commutative.  However, in  this case
this structure may not be unique (See \cite{Cur}).

Given a simply connected CW space $X$,
Sullivan constructed a rationalization
map $X \to X_{\QQ}$ which has the property that the associated homomorphism on
the higher homotopy groups is given by tensoring with the rational numbers
(\cite{Su}; rationalization is a special case of a more general construction, localization,
that can be made for any set of primes).
Later, this theory was extended to include nilpotent spaces, i.e., spaces with non-trivial nilpotent fundamental group $\pi$ 
having the property that the higher homotopy groups are nilpotent modules
over $\pi$ (\cite{HMR}, \cite{BK}).

It is well-known that topological groups are nilpotent spaces, so one can
consider the rationalization map $G \to G_{\QQ}$ for connected topological groups $G$ (whose
underlying space is a CW complex). Since localization commutes with finite products up to homotopy,
it follows that $G_{\QQ}$ has the structure of an $H$-space, and furthermore, the rationalization
map is an $H$-map, i.e., it preserves multiplications up to homotopy.
This motivates the following: let us
call two $H$-spaces $X$ and $Y$
{\it rationally $H$-equivalent}
if there is a homotopy equivalence $X_\QQ \to Y_\QQ$ which is a
map of $H$-spaces.

To state our calculation of the rational homotopy groups of $UA_\zeta$,
we introduce  some notation.   Given   $\mathbb Z$-graded vector spaces $V$ and
$W$, we grade the tensor product $V\otimes W$  by declaring that $v\otimes w$ has
 degree $|v| + |w|$. Here $|v|$ denotes the degree of the element $v \in V.$ Let $V\widetilde{\otimes} W$ be the effect of considering
only tensors with non-negative grading.

Given   elements $x_1, \ldots, x_n$ each of homogeneous degree,
write $\QQ(x_1, \ldots,  x_n )$ for the graded vector space  with basis $x_1, \ldots, x_n$.
Given a topological group $G$,
write $G\idc$ for the path component of the identity in $G$.
Let  $\Check{H}^*(X; \QQ)$ denote  the \v{C}ech cohomology of a space $X$
with rational coefficients {\emph{graded nonpositively}} so that $x \in \Check{H}^n(X; \QQ)$ has degree $-n$. 

\begin{bigthm} \label{bigthm:mainA}
Let $\zeta$   be  a principal $PU_n$
 bundle over  a compact metric space $X$.  Let
$A_\zeta$ be the associated $C^*$-algebra, and let $UA_\zeta$ be its group of unitaries.
Then the rationalization of $(UA_\zeta)\idc$ is rationally $H$-equivalent to a product of
rational Eilenberg-Mac\,Lane spaces  with the standard multiplication,
with degrees and dimensions corresponding to an isomorphism of graded vector spaces
$$
\pi_*\left((UA_\zeta)\idc \right) \otimes \QQ \,\, \cong \,\, \Check{H}^*(X; \QQ) \,
\widetilde{\otimes}  \,  \QQ(s_1, \ldots, s_n) \, ,
$$
where the basis element
$s_i$ has degree $2i-1$.

\end{bigthm}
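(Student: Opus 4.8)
The plan is to recognize $(UA_\zeta)\idc$ as the identity component of a space of sections and then to compute its rational homotopy by the standard machinery for mapping and section spaces. First I would observe that a unitary section of $\Ad\,\zeta = T\times_{PU_n}M_n$ is exactly a section of the associated fibre bundle $T\times_{PU_n}U_n \to X$ with fibre $U_n$, the structure group $PU_n$ acting by conjugation; thus $UA_\zeta = \Gamma(X;\, T\times_{PU_n}U_n)$ is a gauge-type group. The rational homotopy of the fibre is classical: as an $H$-space, $U_n$ rationalizes to the product $\prod_{i=1}^n K(\QQ, 2i-1)$ with the standard multiplication, since $H^*(U_n;\QQ)$ is exterior on primitive generators in degrees $1,3,\dots,2n-1$. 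Because $\pi_k\,\mathrm{Map}(X, K(\QQ,m)) \cong \Check H^{m-k}(X;\QQ)$, assembling over the $n$ factors reproduces exactly the claimed graded vector space $\Check H^*(X;\QQ)\,\widetilde\otimes\,\QQ(s_1,\dots,s_n)$ in each positive degree. Hence the real content is that, rationally and as an $H$-space, the section space behaves like the full mapping space $\mathrm{Map}(X,U_n)$ --- equivalently, that the bundle $\zeta$ contributes nothing rationally.

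Because $X$ is only a compact metric space, the second step is a continuity reduction. I would write $X = \Invlim X_\alpha$ as an inverse limit of compact polyhedra (for instance nerves of a cofinal system of finite open covers), so that $\Check H^*(X;\QQ) = \Dirlim H^*(X_\alpha;\QQ)$, pull $\zeta$ back along the projections, and invoke the continuity of the rationalized homotopy groups of the unitary group with respect to this limit (established earlier). This lets me replace $X$ by a finite CW complex throughout the homotopy-theoretic computation and recover the \v{C}ech statement at the end by passage to the colimit.

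For $X$ a finite complex the computation is the standard rational-homotopy analysis of section spaces whose fibre is a product of Eilenberg-Mac\,Lane spaces: the homotopy groups split as a sum of terms $\Check H^{\,2i-1-k}(X;\QQ)$, one for each rational sphere factor $S^{2i-1}$ of $U_n$, and the pointwise group multiplication rationalizes to the standard multiplication on a product of $K(\QQ,k)$'s, because it is induced factorwise from the standard $H$-structure on $(U_n)_\QQ$ and mapping spaces into a product of rational Eilenberg-Mac\,Lane spaces are again such products. Matching degrees gives the stated isomorphism for $*\ge 1$; the identity-component hypothesis disposes of $\pi_0$.

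The hard part will be justifying that $\zeta$ drops out, i.e.\ that the section space is rationally $H$-equivalent to the mapping space. The structure group $PU_n$ is connected, so its conjugation action on $H^*(U_n;\QQ)$ and on $\pi_*(U_n)\otimes\QQ$ is trivial and the fibration is rationally simple; but triviality of the monodromy alone does not make a fibration fibrewise rationally trivial. I would therefore analyze the conjugation map $PU_n \to \aut_1\bigl((U_n)_\QQ\bigr)$ directly and show that the composite classifying map $X \to BPU_n \to B\aut_1\bigl((U_n)_\QQ\bigr)$ is rationally null, so that the fibrewise rationalization of $\Ad\,\zeta$ is a product. The independence-of-$\zeta$ phenomenon asserted in the abstract lives entirely in this step, and it is where I expect the genuine work --- and the most delicate use of the hypothesis that the fibre is a product of odd rational spheres --- to be concentrated.
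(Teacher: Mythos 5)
Your reduction steps coincide with the paper's: the identification of $UA_\zeta$ with the sections of $T\times_{PU_n}U_n \to X$ is exactly the paper's Example \ref{thm:UA proj gauge}, and the inverse-limit passage from finite complexes to compact metric $X$ (Eilenberg--Steenrod approximation, continuity of \v{C}ech cohomology, continuity of homotopy groups of section spaces) is what Sections \ref{sec:functions}--\ref{sec:sections} carry out. The divergence is at the step where, as you yourself say, the entire content lives --- making the bundle drop out rationally --- and there your proposal stops at a statement of intent: you assert one should prove that $X \to BPU_n \to B\aut_1\bigl((U_n)_\QQ\bigr)$ is null, but give no argument, and that assertion is essentially the theorem in disguise. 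The statement is in fact true and can be proved using an ingredient the paper cites only as ``related'': Crabb--Sutherland \cite[prop.\ 2.2]{CS} give nullity of $BU_n \to B\aut_1\bigl((U_n)_\QQ\bigr)$ (fibrewise rational triviality of the universal adjoint bundle); the conjugation action of $PU_n$ on $U_n$, pulled back along the finite covering homomorphism $SU_n \to PU_n$, is the restriction of the conjugation action of $U_n$ on itself, so the composite $BSU_n \to BPU_n \to B\aut_1\bigl((U_n)_\QQ\bigr)$ factors through the null map above; and since $BSU_n \to BPU_n$ is a rational equivalence of simply connected spaces while $B\aut_1\bigl((U_n)_\QQ\bigr)$ is a simply connected rational space, nullity descends to $BPU_n$. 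The paper takes a genuinely different route that avoids fibrewise triviality altogether: Gottlieb's identity $\GG(\zeta)\simeq \Omega_{h_\zeta}F(X,BG)$ converts gauge groups into loop spaces of mapping-space components, where the $H$-space structure on $(BG)_\QQ$ forces all components of $F(X,(BG)_\QQ)$ to agree (Theorem \ref{thm:mainC}), and the projective case is reduced to the gauge case via the Cartan covering $S^1\times SU_n \to U_n$ (Propositions \ref{Pro:G_0} and \ref{U_0->U}). You would additionally need, for each finite stage $X_j$, that sections of the fibrewise rationalization compute the rationalization of the section space --- a section-space analogue of Proposition \ref{Pro:HMR} that the paper never needs, precisely because Gottlieb lets it work with mapping spaces throughout.

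The second gap concerns the $H$-space clause of the theorem. A fibrewise homotopy trivialization of $T\times_{PU_n}(U_n)_\QQ$ is merely a fibre homotopy equivalence; nothing forces it to respect the fibrewise group structures. So your claim that the section-space multiplication ``is induced factorwise from the standard $H$-structure on $(U_n)_\QQ$'' does not follow: under a non-multiplicative trivialization, pointwise multiplication of sections transports to a possibly twisted multiplication on $F\bigl(X,(U_n)_\QQ\bigr)$, and the ``standard multiplication'' conclusion is exactly what remains to be shown. The paper is careful about this point: its final $H$-equivalence is not induced by any geometric map but is produced abstractly by Scheerer's theorem (Proposition \ref{Pro:Scheerer}), after verifying that the rational Samelson Lie algebra of $\mathcal{P}(\zeta)\idc$ is abelian --- which it does by observing that all finite-stage comparisons (Gottlieb's identity, Hilton--Mislin--Roitberg localization) are $H$-maps and that a direct limit of abelian Samelson algebras is abelian (Proposition \ref{Cor:limGG}). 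To close your argument you must either upgrade your trivialization to a trivialization of fibrewise $H$-spaces (nullity of a classifying map into the classifying space for fibrewise $H$-group structures, a strictly stronger statement than the one you pose), or else follow the paper: prove abelianness of the Samelson algebra by some multiplicative comparison and then invoke Scheerer.
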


\medskip

Theorem \ref{bigthm:mainA} is a special case of more general
calculations of the rational homotopy  theory  of gauge groups
which we now describe.
Write $F(X, Y)$ for the (function) space of all continuous maps from
$X$ to $Y$.
When $G$ is a topological group, the space $F(X, G)$ is one also with
multiplication of functions taken pointwise.
In this case, the identity component  $F(X,G)_\circ $ is the space of freely nullhomotopic maps.

\begin{bigthm}\label{bigthm:mainB}
Let $X$ be a compact metric space and
let  $G$ be a connected topological group having the homotopy type of a finite CW complex. Then
$$
\pi_*(F(X, G)\idc) \otimes \QQ \,\, \cong\,\,  \Check{H}^*(X; \QQ)
\, \widetilde{\otimes} \, \left( \pi_*(G) \otimes \QQ \right)\,  .
$$
 Furthermore, $F(X, G)\idc$ is rationally $H$-equivalent
 to a product of Eilenberg-Mac\,Lane spaces with
the standard multiplication, with degrees and dimensions corresponding
to the displayed isomorphism.
\end{bigthm}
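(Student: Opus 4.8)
The plan is to reduce both the target $G$ and the source $X$ to cases in which the function space is visibly a product of Eilenberg-Mac\,Lane spaces, and then to transport the conclusion back. Since $G$ is a connected topological group with the homotopy type of a finite complex, it is a finite-type $H$-space, so by Hopf's theorem $\Check{H}^*(G;\QQ)$ is an exterior algebra on odd-degree generators; equivalently the rationalization $G_\QQ$ is $H$-equivalent, with the standard multiplication, to a product $\prod_j K(\pi_j(G)\otimes\QQ, j)$. The rationalization $G \to G_\QQ$ is an $H$-map, and postcomposition gives an $H$-map of topological groups $F(X,G) \to F(X, G_\QQ)$.

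First I would establish the theorem for $X$ a finite CW complex. Here the localization theory of \cite{HMR} applies: for finite $X$ and nilpotent finite-type $Y$ one has $\left(F(X,Y)\idc\right)_\QQ \simeq F(X, Y_\QQ)\idc$, so with $Y = G$ and the splitting above the problem reduces to a single factor $F(X, K(\QQ,m))\idc$. As $K(\QQ,m)$ is an infinite loop space, this space is itself a product of Eilenberg-Mac\,Lane spaces, with $\pi_k\left(F(X,K(\QQ,m))\idc\right) \cong \Check{H}^{m-k}(X;\QQ)$ for $k \geq 1$. Unwinding the grading conventions---a generator $s$ of degree $m$ paired with a class in $\Check{H}^{m-k}$ of degree $-(m-k)$ produces degree $k$---this is exactly $\left[\Check{H}^*(X;\QQ)\,\widetilde{\otimes}\,\QQ(s)\right]_k$. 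Taking the product over the factors of $G_\QQ$ yields both the displayed isomorphism and the $H$-space splitting in the finite case.

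To treat an arbitrary compact metric space I would write $X = \varprojlim_\alpha X_\alpha$ as an inverse limit of compact polyhedra. Because $G$ is an ANR and $X$ is compact, every map and homotopy of maps out of $S^k \wedge X_+$ factors through a finite stage, so $\pi_k\left(F(X,G)\idc\right) \cong \varinjlim_\alpha \pi_k\left(F(X_\alpha,G)\idc\right)$. Tensoring with $\QQ$ (which commutes with filtered colimits) and invoking the finite case, the right-hand side becomes $\varinjlim_\alpha\big[\Check{H}^*(X_\alpha;\QQ)\,\widetilde{\otimes}\,(\pi_*(G)\otimes\QQ)\big]$; since $\Check{H}^*(X;\QQ) = \varinjlim_\alpha \Check{H}^*(X_\alpha;\QQ)$ by the continuity of \v{C}ech cohomology and $\widetilde{\otimes}$ commutes with such colimits, the asserted isomorphism follows. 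This is the step that forces \v{C}ech rather than singular cohomology.

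It remains to upgrade this to a rational $H$-equivalence with a product of Eilenberg-Mac\,Lane spaces. I would first note that $F(X,G)\idc$ is a connected topological group whose underlying space is a metrizable ANR, hence a nilpotent space of CW homotopy type, so it admits a rationalization which is again an $H$-space. The cleanest finish is to apply the structure theorem for rational $H$-spaces: the rational homology of a connected $H$-space is a graded-commutative, cocommutative Hopf algebra over $\QQ$, hence free, so $\left(F(X,G)\idc\right)_\QQ$ is $H$-equivalent to the product of Eilenberg-Mac\,Lane spaces on its rational homotopy, with the standard multiplication. I expect the main obstacle to lie precisely here in the non-finite-complex setting: $F(X,G)\idc$ need not be of finite type, so one must either check that the Hopf-algebra structure theorem still applies to its possibly infinitely generated rational homology, or else build the $H$-equivalence directly as a colimit of the finite-stage $H$-equivalences of the second paragraph---verifying that the precomposition homomorphisms $F(X_\alpha,G)\idc \to F(X_{\alpha'},G)\idc$ and their splittings are compatible $H$-maps, so that the resulting limit equivalence is again an $H$-map. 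Reconciling the inverse-limit geometry of $X$ with the homotopy-theoretic rationalization while retaining multiplicative control is where the real work resides.
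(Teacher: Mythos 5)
Your first two steps coincide with the paper's own argument: the finite-complex case is Theorem \ref{thm:mainB} (HMR localization plus the splitting of $G_\QQ$ and Thom's formula), and the passage to compact metric $X$ at the level of homotopy groups is Theorem \ref{thm:Flim} together with continuity of \v{C}ech cohomology. (The paper proves the limit isomorphism via Eilenberg--Steenrod and Spanier in a relative form rather than by an ANR factorization; note also that $G$ is only assumed to have the \emph{homotopy type} of a finite complex, so it need not literally be an ANR, and likewise $F(X,G)$ need not be one --- the paper instead gets CW type from Milnor's theorem and nilpotence from the topological group structure.) The genuine gap is in your final step. The ``structure theorem'' of your Option A is false as stated: the rational homology of a connected group-like $H$-space is cocommutative as a coalgebra, but the Pontryagin product is \emph{not} graded-commutative in general. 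For example $H_*(\Omega S^2;\QQ)$ is the tensor algebra on a degree-one class, and $\Omega S^2$ is a loop space whose underlying rational space \emph{is} $K(\QQ,1)\times K(\QQ,2)$, yet it is not rationally $H$-equivalent to that product with the standard multiplication, because its Samelson bracket (the Whitehead product $[\iota_2,\iota_2]$) is rationally nonzero. By Milnor--Moore, commutativity of the Pontryagin ring is \emph{equivalent} to vanishing of the rational Samelson brackets, which by Proposition \ref{Pro:H-abelian} is equivalent to the very conclusion you want. So Option A assumes what must be proved; the finite-type issue you flag is not the real obstruction, and splitting the underlying space into Eilenberg-Mac\,Lane factors says nothing about which multiplication the splitting carries. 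Option B, assembling non-canonical finite-stage $H$-equivalences over the inverse system, is left unexecuted and is exactly the kind of compatibility problem that does not resolve naively.

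The way the paper closes this gap --- and you already have every ingredient for it --- is to rationalize the \emph{target} $G$ rather than the function space. Postcomposition with $\ell_G\:G\to G_\QQ$ gives an $H$-map $F(X,G)\idc\to F(X,G_\QQ)\idc$ for the pointwise multiplications. Applying the limit isomorphism of Theorem \ref{thm:Flim} to both $G$ and $G_\QQ$, and Proposition \ref{Pro:HMR} at each finite stage, shows that this map induces rationalization on homotopy groups; since $F(X,G)\idc$ is a connected topological group of CW homotopy type, hence nilpotent, the map is therefore an honest rationalization map. This is the paper's Theorem \ref{thm:HMR extended}, and it is where the nilpotence hypothesis earns its keep. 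Homotopy commutativity then comes for free: $G_\QQ$ is homotopy commutative because $G$ is homotopy-finite (Corollary \ref{Cor:G properties}), hence $F(X,G_\QQ)\idc$ is homotopy commutative under pointwise multiplication, hence so is the rationalization of $F(X,G)\idc$; Proposition \ref{Pro:H-abelian} then yields the $H$-equivalence to the product of Eilenberg-Mac\,Lane spaces with the standard multiplication. No colimit of $H$-equivalences, and no Hopf-algebra structure theory beyond what is packaged in Proposition \ref{Pro:H-abelian}, is needed.
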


 When $X$ is a finite complex, Theorem \ref{bigthm:mainB} is a consequence
of  results of Thom \cite{Thom}
and a basic localization result for components of $F(X,Y)$ due to Hilton, Mislin and Roitberg
\cite{HMR}.   The result for $X$ finite in this case is described  in  \cite[\S4]{LPSS}.                   Our advance here is the 
extension
of this result to the case when $X$  is compact metric.  We   deduce  Theorem \ref{bigthm:mainB} from an  extension of the  
Hilton-Mislin-Roitberg
  result to the case $X$ compact metric (Theorem \ref{thm:HMR extended}).

\begin{bigadd} \label{addB} In Theorem \ref{bigthm:mainB},
the calculation of  rational homotopy groups holds for any connected, group-like H-space $G$.
Furthermore, if $G$ is rationally homotopy commutative, then
$F(X,G)$ is rationally $H$-equivalent to a product of Eilenberg-Mac\,Lane spaces with
the standard multiplication.
\end{bigadd}

The main results of this paper concern extending Theorem \ref{bigthm:mainB}  to  spaces of sections of certain bundles. Let 
$G$ be a topological group and let
$$
 \zeta\: T\to  X
$$
be a principal $G$-bundle.
Following \cite[\S2]{AB}, we form the associated {\it adjoint bundle}
$$
\Ad(\zeta) \:   T \times_G G^{\ad} \to X
$$
where $G$ acts on $G^{\ad} = G$ by conjugation.
The {\em gauge group}  $\GG(\zeta)$ of $\zeta$ is the space of sections
of $\Ad(\zeta)$, with group structure defined by pointwise multiplication
of sections. Alternatively, $\GG(\zeta)$ is
the group of $G$-equivariant bundle automorphisms of $\zeta$ that cover the
identity map of $X$.

\begin{bigthm}
\label{bigthm:mainC} Let $G$ be a connected topological group having the homotopy
type of a finite CW complex. Let
 $\zeta$ be a principal $G$-bundle over a compact
metric space $X$.  Then there is a
  rational $H$-equivalence
$$\GG(\zeta)\idc \,\, \simeq_\QQ \,\, F(X, G)\idc\, .$$ Consequently, $\GG(\zeta)\idc$ is rationally
homotopy commutative with rational homotopy groups given by
the isomorphism appearing in Theorem \ref{bigthm:mainB}.
\end{bigthm}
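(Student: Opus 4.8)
The plan is to realize both groups as section spaces of fiber bundles over $X$ with fiber $G$ and to compare their fiberwise rationalizations. Indeed $\GG(\zeta)$ is the section space $\Gamma(\Ad(\zeta))$ of the adjoint bundle, while $F(X,G)=\Gamma(X\times G)$ is the section space of the trivial bundle. The first step is to invoke the machinery behind Theorem \ref{bigthm:mainB}, in the form of the extended Hilton--Mislin--Roitberg theorem (Theorem \ref{thm:HMR extended}): for a fiber bundle $E\to X$ over a compact metric base with fiber of the homotopy type of a finite complex, the rationalization of the identity component of $\Gamma(E)$ is computed from the fiberwise rationalization $E_\QQ\to X$, naturally in $E$. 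Thus it suffices to produce a fiber homotopy equivalence $\Ad(\zeta)_\QQ\simeq X\times G_\QQ$ over $X$, compatibly with the pointwise $H$-space structures on sections, and then read off the rational homotopy groups from Theorem \ref{bigthm:mainB}.

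The heart of the argument is that the adjoint bundle becomes fiberwise rationally trivial. Here I would use that $\Ad(\zeta)$ is not merely a bundle with fiber $G$ but a bundle of topological groups: its structure maps are the conjugations $c_g\colon G\to G$, which are group homomorphisms and hence $H$-maps. Because $G$ is connected, each $c_g$ is homotopic to $\mathrm{id}_G$ and fixes the neutral element, so the adjoint action induces the identity on $\pi_*(G)$. Passing to the fiber, $G_\QQ\simeq\prod_i K(\pi_i(G)\otimes\QQ,\,i)$ is a homotopy-commutative rational $H$-space, and the conjugations determine a classifying map of $\zeta$ into $B\aut_H(G_\QQ)$, the classifying space for bundles of rational $H$-spaces. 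The key point I must establish is a rigidity statement: an $H$-self-equivalence of $G_\QQ$ that induces the identity on rational homotopy is homotopic, through $H$-maps, to the identity. This holds because an $H$-map $G_\QQ\to K(V,n)$ is exactly a primitive rational cohomology class, so the space of $H$-self-maps of $G_\QQ$ is determined by, and rigid in, its effect on $\pi_*(G)\otimes\QQ$; consequently the relevant identity component of $\aut_H(G_\QQ)$ is rationally contractible. Since the classifying map for $\Ad(\zeta)_\QQ$ lands in this component, the bundle $\Ad(\zeta)_\QQ$ is $H$-fiber homotopy equivalent to $X\times G_\QQ$.

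Combining the two steps finishes the proof: the fiber homotopy equivalence of bundles of rational $H$-spaces induces an $H$-equivalence $\Gamma(\Ad(\zeta)_\QQ)\idc\simeq F(X,G_\QQ)\idc$, which under the comparison of the first paragraph yields the asserted rational $H$-equivalence $\GG(\zeta)\idc\simeq_\QQ F(X,G)\idc$; the statement about rational homotopy groups and homotopy commutativity is then immediate from Theorem \ref{bigthm:mainB}. I expect the main obstacle to be the second step, in two respects. First, the rigidity of $H$-self-equivalences of $G_\QQ$ must be proved carefully, controlling the possible decomposable contributions to self-maps and verifying that the $H$-structure (not merely the underlying homotopy type) removes them. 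Second, because $X$ is only compact metric, the fiberwise rationalization and the identification of $(\GG(\zeta)\idc)_\QQ$ with sections of $\Ad(\zeta)_\QQ$ cannot be taken for granted; this is precisely where one needs the \v{C}ech/inverse-limit input of the extended Hilton--Mislin--Roitberg theorem, and keeping the $H$-space structures coherent throughout the limiting process is the delicate part.
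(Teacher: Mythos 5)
Your overall strategy --- fibrewise rationalization of $\Ad(\zeta)$ followed by a trivialization of the resulting bundle of rational $H$-spaces --- is genuinely different from the paper's (which never fibrewise-localizes anything), and the key input you want, fibre homotopy triviality of the fibrewise rationalization of the adjoint bundle, is precisely the Crabb--Sutherland theorem \cite[prop.\ 2.2]{CS} that the paper cites only as related work. But your proposed proof of that step contains a genuine gap: the rigidity claim is false. What is true is only the $\pi_0$-statement: since an $H$-map $G_\QQ\to K(V,n)$ is a primitive class, and over $\QQ$ the primitives of a free graded-commutative Hopf algebra are spanned by the generators, an $H$-self-equivalence of $G_\QQ$ inducing the identity on $\pi_*\otimes\QQ$ is homotopic to the identity. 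This does \emph{not} make the identity component of $\aut_H(G_\QQ)$ contractible (rationally or otherwise), and contractibility of that component --- not $\pi_0$-rigidity --- is what your argument needs to conclude that the classifying map $X\to B\aut_H(G_\QQ)$ is null. Concretely, take $G=SU(3)$, so $G_\QQ\simeq K(\QQ,3)\times K(\QQ,5)$. Whether you model $\aut_H(G_\QQ)$ as the union of the $H$-components of the mapping space $F(G_\QQ,G_\QQ)$ or rectify all the way to additive self-maps of a simplicial $\QQ$-vector-space model, the identity component satisfies
$$
\pi_2\,\,\supseteq\,\,\mathrm{Hom}\bigl(\pi_3(G)\otimes\QQ,\ \pi_5(G)\otimes\QQ\bigr)\,\,\cong\,\,\QQ\,\ne\,0
$$
(in the mapping-space model this is the summand $\tilde H^{3}(G_\QQ;\pi_5(G)\otimes\QQ)$ of $\pi_2$ of the identity component). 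Since these spaces are already rational, ``rationally contractible'' buys nothing, so $B\aut_H(G_\QQ)$ has nonvanishing higher homotopy in the relevant component and the triviality of $\Ad(\zeta)_\QQ$ does not follow from your argument. The true statement is proved by a different mechanism: by Lemma \ref{lem:LG}, $\Ad(\zeta)$ is pulled back from the free loop fibration $L(BG)\to BG$, and the free loop fibration of a group-like $H$-space such as $(BG)_\QQ$ (Proposition \ref{Pro:G properties}(c)) is fibre homotopy trivial.

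There is a second gap in your first step: Theorem \ref{thm:HMR extended} is a statement about function spaces $F(X,Y)$ only; it does not say that the rationalization of $\Gamma(E)\idc$ is computed by the section space of a fibrewise rationalization $E_\QQ\to X$, let alone naturally in $E$ and compatibly with the pointwise $H$-structures. (Nilpotence is not the issue here --- $\GG(\zeta)\idc$ is a topological group of CW type by Proposition \ref{Pro:milnor} and its corollary, hence nilpotent --- but the fibrewise localization statement for section spaces over a compact metric base is not in the paper and would require its own inverse-limit argument via Theorem \ref{thm:lim-section}.) The paper sidesteps both of your problematic steps: for finite $X$ it uses Gottlieb's identity $\GG(\zeta)\simeq\Omega_{h_\zeta}F(X,BG)$ (Corollary \ref{cor:LG}, Theorem \ref{thm:mainC}) together with Proposition \ref{Pro:HMR} and the fact that all components of $F(X,(BG)_\QQ)$ are homotopy equivalent; for compact metric $X$ it works only at the level of homotopy groups (Theorem \ref{thm:lim-section}, Proposition \ref{Cor:limGG}), observes that the rational Samelson Lie algebra of $\GG(\zeta)\idc$ is a direct limit of abelian Lie algebras and hence abelian, and then invokes Scheerer's theorem (Proposition \ref{Pro:Scheerer}) to convert the Lie algebra isomorphism into the desired rational $H$-equivalence. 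If you wish to salvage your route, you need both the Crabb--Sutherland trivialization (proved as above, not via $\aut_H$) and a section-space analogue of Theorem \ref{thm:HMR extended} over compact metric bases; each is real work beyond what you have written.
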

Again, when $X$ is a finite CW complex this result admits a direct proof.
In this case,   a result of Gottlieb   gives  a multiplicative equivalence
\[
\GG(\zeta) \simeq \Omega_{h_\zeta} F(X, BG),
\]
 where the right side denotes the loop space
of $F(X,BG)$ based at $h_\zeta \: X \to BG$, the
classifying map for $\zeta$ (see Corollary \ref{cor:LG}, \cite[th.\ 1]{Got} and  \cite[prop.\ 2.4]{AB}).  The equivalence  
$\GG(\zeta) \simeq_\QQ F(X, G)$ then follows from the Hilton-Mislin-Roitberg localization result for function spaces  
mentioned above and basic rational homotopy theory. (See Theorem \ref{thm:mainC} below.)  The result in this case was  
recently, independently obtained  by F\'elix and Oprea at the level of rational homotopy groups \cite[th.\ 3.1]{FO}.   Another 
related result here  is due to Crabb and Sutherland, who prove the fibrewise rationalization of the  bundle $\Ad(\zeta_G)$  is 
fibre homotopically trivial, where $\zeta_G$ is the universal $G$-bundle \cite[prop.\ 2.2]{CS}.

The following shows that the homotopy finiteness assumption on $G$ in Theorem \ref{bigthm:mainC} can sometimes be  
dispensed with.

\begin{bigadd} \label{addC} Assume $G$ is a topological group
such that $BG$ has the rational homotopy type of a group-like $H$-space.
Then the conclusion of Theorem  \ref{bigthm:mainC} holds for such $G$.
\end{bigadd}

For example,  if $G$ is a connected topological group satisfying rational Bott periodicity, then
$BG$ has the rational homotopy type of a group-like $H$-space.
\medskip

Let  $$PG \,\, = \,\, G/Z(G)$$ denote the
projectivization of $G$; i.e., the quotient of $G$ by its center. As the center
acts trivially on $G^{\ad}$, one obtains an action of $PG$ on $G^{\ad}$.
Given a principal $PG$-bundle
\[
\zeta\:T \to  X,
\]
form the associated {\em projective adjoint} bundle
$$
\Pad(\zeta )\:  T\times_{PG} G^{\ad} \to X.
$$
Define $\mathcal{P}(\zeta)$ to be the topological group of sections of the bundle $\Pad(\zeta)$ with pointwise multiplication 
again induced by $G^{\ad}.$  We
call $\mathcal{P}(\zeta)$ the {\em projective gauge group} of $\zeta$.
In Example \ref{thm:UA proj gauge} below, we observe that
$UA_\zeta \cong \mathcal{P}(\zeta) $
corresponds to the projective adjoint bundle of a principal $PU_n$-bundle.  Theorem \ref{bigthm:mainA}  is thus a special 
case of the following result.

\begin{bigthm}
\label{bigthm:mainD}
Let $G$ be a compact connected Lie group. Let $\zeta$ be a principal $PG$-bundle over a compact
metric space $X$.  Then there is  a
  rational $H$-equivalence
$$\mathcal{P}(\zeta)\idc \,\, \simeq_\QQ \,\, F(X, G)\idc\, .$$
Thus $\mathcal{P}(\zeta)\idc$ is rationally
homotopy commutative with rational homotopy groups again given by  the
isomorphism appearing in Theorem \ref{bigthm:mainB}.
\end{bigthm}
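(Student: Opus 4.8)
The plan is to imitate the strategy behind Theorem \ref{bigthm:mainC}, replacing the ordinary adjoint bundle by the projective adjoint bundle $\Pad(\zeta)$ and reducing everything to the single assertion that $\Pad(\zeta)$ becomes fibre homotopically trivial after fibrewise rationalization. Granting this, the group of sections $\mathcal{P}(\zeta)\idc$ rationalizes to the group of sections of the trivial bundle with fibre $G_\QQ$, namely $F(X, G_\QQ)$, and the pointwise multiplication induced by $G^{\ad} = G$ is carried to the pointwise multiplication on $F(X, G_\QQ)$; combining this with the extended Hilton--Mislin--Roitberg result (Theorem \ref{thm:HMR extended}) identifying $F(X, G_\QQ)$ with $(F(X,G)\idc)_\QQ$ then yields the desired rational $H$-equivalence, while Theorem \ref{bigthm:mainB} supplies the rational homotopy groups and the homotopy commutativity. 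I note that in the special case where the principal $PG$-bundle $\zeta$ lifts to a principal $G$-bundle $\eta$, one has an honest equality of bundles $\Pad(\zeta) = \Ad(\eta)$ (since $Z(G)$ acts trivially on $G^{\ad}$, the $G$-conjugation action factoring through $PG$), hence $\mathcal{P}(\zeta) = \GG(\eta)$, and the result follows at once from Theorem \ref{bigthm:mainC}. In general, however, $\zeta$ need not admit such a lift, the obstruction being a Dixmier--Douady type class in $\Check{H}^3(X;\QQ)$, so one must argue rationally and directly.

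The heart of the matter is thus the fibrewise rational triviality of $\Pad(\zeta)$. This bundle has fibre $G^{\ad} = G$ and structure group $PG$ acting through inner automorphisms, so its fibrewise rationalization is classified by a composite $X \xrightarrow{h_\zeta} BPG \to B(\aut(G_\QQ)\idc)$, the second map being $Bc$ for the conjugation homomorphism $c\: PG \to \aut(G_\QQ)\idc$, which lands in the identity component because $G$ is connected (each inner automorphism is joined to the identity by a path of conjugations). It suffices to show that $Bc$ is rationally nullhomotopic. This is where the hypothesis that $G$ is a compact connected Lie group enters decisively: I would deduce the nullity from the Crabb--Sutherland theorem \cite{CS}, which asserts precisely that the universal adjoint bundle $\Ad(\zeta_G) \to BG$ is fibrewise rationally trivial. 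Since the conjugation action of $G$ on $G^{\ad}$ factors through $PG$, the bundle $\Ad(\zeta_G)$ is the pullback along $BG \to BPG$ of the universal projective adjoint bundle over $BPG$; and because $Z(G)$ is a torus up to a finite group, the induced map $H^*(BPG;\QQ) \to H^*(BG;\QQ)$ is injective. An obstruction-theoretic argument up the Postnikov tower of $B(\aut(G_\QQ)\idc)$, whose homotopy groups are rational vector spaces, then transfers the vanishing of the classifying class from $BG$ back to $BPG$, giving the required rational nullity over $BPG$ and hence, after pulling back along $h_\zeta$, over $X$.

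The step I expect to be the main obstacle is exactly this transfer from $BG$ to $BPG$: Crabb--Sutherland only provides triviality after restriction along $BG \to BPG$, and one must promote this to triviality over $BPG$ itself. The injectivity of $H^*(BPG;\QQ) \to H^*(BG;\QQ)$ controls the primary obstruction, but the higher obstructions and the non-abelian nature of $B(\aut(G_\QQ)\idc)$ require care, so one works inductively through a principal refinement of the Postnikov tower, at each stage verifying that the obstruction class, a priori living in $H^*(BPG;\QQ)$ with coefficients in the rational vector space $\pi_*(\aut(G_\QQ)\idc)$, is killed because its injective image in the corresponding group over $BG$ already vanishes by Crabb--Sutherland. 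The remaining ingredients---rationalization of section spaces, compatibility with pointwise multiplication, and the passage from $F(X,G_\QQ)$ to $(F(X,G)\idc)_\QQ$---are routine and run exactly parallel to the proof of Theorem \ref{bigthm:mainC}.
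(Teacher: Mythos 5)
Your strategy---prove that the fibrewise rationalization of $\Pad(\zeta)$ is fibre homotopically trivial and then identify section spaces---is genuinely different from the paper's proof, but it has a real gap at exactly the step you flag as the main obstacle: the transfer of the Crabb--Sutherland triviality from $BG$ to $BPG$. The inductive obstruction argument you propose is not valid. In a Postnikov induction the obstruction (difference) class attached to the classifying map $f\colon BPG \to B(\aut(G_\QQ)\idc)$ at a given stage is well defined only modulo an indeterminacy generated by the choices of partial null-homotopy at earlier stages, and the null-homotopy of $f\circ g$ supplied by Crabb--Sutherland (where $g\colon BG\to BPG$) need not be the pullback of any partial null-homotopy of $f$. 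Concretely: vanishing of $(f\circ g)$ at stage $n$ only says that $g^*\delta_f$ lies in the indeterminacy subgroup computed over $BG$, and that subgroup is in general strictly larger than $g^*$ of the indeterminacy over $BPG$, precisely because $H^*(BG;\QQ)$ is strictly larger than $H^*(BPG;\QQ)$ whenever $Z(G)$ has positive dimension (e.g.\ $G=U(n)$, where $g$ is far from a rational equivalence). So injectivity of $g^*$ on rational cohomology does not close the induction. The argument would be fine if $B(\aut(G_\QQ)\idc)$ were known to be a product of Eilenberg--Mac\,Lane spaces, so that $[-,B(\aut(G_\QQ)\idc)]$ were a product of cohomology groups, but you establish no such thing and it fails for general rational fibres. (Two smaller inaccuracies: the lifting obstruction for $PU_n$-bundles is a torsion class in $H^3(X;\ZZ)$, not a class in $\Check H^3(X;\QQ)$; and a fibre homotopy trivialization is not a fibrewise $H$-map, so even granted triviality you get an equivalence of section \emph{spaces}, not yet the asserted rational $H$-equivalence of groups.)

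The gap is repairable, and the repair uses the very same classical input as the paper (Proposition \ref{Pro:G_0}): Cartan's theorem gives a finite covering homomorphism $T^\ell\times G'\to G$ with $G'$ simply connected, and the induced homomorphism $G'\to PG$ is surjective with finite kernel, so $BG'\to BPG$ is a rational homotopy equivalence. Hence $g\colon BG\to BPG$ admits a section after rationalization, namely $s\colon (BPG)_\QQ\simeq (BG')_\QQ\to (BG)_\QQ$ with $g_\QQ\circ s\simeq \mathrm{id}$. Since $B(\aut(G_\QQ)\idc)$ is a simply connected space with rational homotopy groups, $f$ factors through $(BPG)_\QQ$, and then $f_\QQ\simeq f_\QQ\circ g_\QQ\circ s\simeq \ast$ by Crabb--Sutherland; this one line replaces the obstruction induction entirely. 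For comparison, the paper never fibrewise-rationalizes: for finite $X$ it uses the Cartan splitting to reduce to the case $G\cong Z(G)\times PG$, which is Theorem \ref{thm:mainC} in disguise, and compares section spaces along the rational equivalence of fibrewise groups $EPG\times_{PG}G_0^{\ad}\to EPG\times_{PG}G^{\ad}$ (Proposition \ref{U_0->U}); for compact metric $X$ it then combines the limit theorem for section-space homotopy groups (Theorem \ref{thm:lim-section}, Proposition \ref{Cor:limGG}) with Scheerer's theorem (Proposition \ref{Pro:Scheerer}) to produce the $H$-equivalence. You will need that last ingredient too: since your trivialization is not multiplicative, the cleanest way to convert your space-level identification into a rational $H$-equivalence is, as in the paper, to observe that the rational Samelson algebra of $\mathcal{P}(\zeta)\idc$ is abelian and invoke Scheerer; and your ``routine'' final step still requires both a fibrewise-localization theorem for section spaces over finite complexes (not proved in the paper) and the paper's limit machinery to reach compact metric $X$.
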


\begin{Rem} \label{about_C*}  Suppose that $C$ is a separable $C^\ast$-algebra. Then
its unitary group $UC$ (with the usual modification for non-unital $C$) has the homotopy type of a countable CW complex. 
Thus so too does $U_\infty C = \Dirlim U_nC$, and   the latter
 is an infinite loop space, by the Bott Periodicity Theorem of R.\ Wood \cite{Wood}. Thus $U_\infty C$ satisfies the conditions 
 on $G$ in Addenda \ref{addB} and \ref{addC}.
The same is  true for $UC$ itself if $C$ is stable.  So our
results also apply to $C^\ast $-algebras constructed similarly to $A_\zeta $ but where the initial fibre $M_n(\CC )$ is replaced 
by  an
appropriate $C^\ast$-algebra $C$.   We develop these ideas in a subsequent paper.
\end{Rem}

The paper is organized as follows.  In Section \ref{sec:conventions},  we establish our
basic conventions for  spaces, groups and bundles.
 In Section  \ref{section_spaces}, we prove various foundational properties of section spaces.
 In Section \ref{sec:groups}, we discuss  the rationalization of topological groups and
the obstruction to homotopy commutativity.
 In Section \ref{sec:prelims}, we prove preliminary versions
of the main theorems  for $X$ a finite CW complex, as mentioned above.

  By a  result of  Eilenberg and Steenrod  \cite{ES},    a compact metric space $X$ may be expressed as the  inverse limit 
  $\Invlim_{j} X_{j}$ of finite CW complexes.
In Section \ref{sec:functions}, we
use this result and the classical works of Dowker \cite{Dow} and Spanier \cite{Span}  to
identify the homotopy groups  of the
function space $F(X,Y)$ in terms of the homotopy groups of the 
approximating function spaces $F(X_j,Y)$.
This result is subsequently extended to section spaces.
As a consequence, in Section \ref{sec:HMR}, we extend the basic localization
result   of Hilton-Mislin-Roitberg \cite[th.\ II.3.11]{HMR}
for function spaces from the case $X$ finite CW to the case $X$ compact, metric
provided  the function space component is a nilpotent space (Theorem \ref{thm:HMR extended}).
In Section \ref{sec:sections} we deduce Theorems \ref{bigthm:mainA}-\ref{bigthm:mainD} by combining the finite complex 
case with the results of Section \ref{sec:functions}.
\medskip

{\flushleft \bf Acknowledgments.}
We thank  Daniel Isaksen,
Gregory Lupton, and J. Peter May  for many helpful discussions. We are especially grateful to  N.~Christopher Phillips
 for vital assistance given to us. This paper is based in many ways upon our joint work \cite{LPSS}.
\newline \ \ \newline


\section{Conventions} \label{sec:conventions}

 This paper brings together results from classical algebraic topology, which is most at home in the
 category of CW complexes, and functional analysis, which is most at home in the category of compact
 metric spaces. Many of our technical results deal with extending classical algebraic topology results from finite complexes to 
 compact metric spaces via limit arguments.

 We work in the category of compactly generated Hausdorff spaces.
 Whenever basepoints are required we assume that they are non-degenerate; that is, we assume
 that the inclusion of the basepoint into the space is a cofibration.
 If the space is a topological group then we take the identity of the group to be the basepoint.
Following the discussion in \cite[pp.\ 20-21]{Wh}, we give the function space  $F(X, Y)$   the topology obtained by first taking 
the compact-open topology and then replacing this with the induced compactly generated topology. In particular, because we 
are retopologizing products, by
a {\it topological group} we mean a topological group object in the category of compactly generated
Hausdorff spaces.

Suppose $A\subset X$ is a subspace. Fixing a map $g\: A \to Y$, we let
  $F(X, Y; g)$ denote  the subspace of those maps $f\: X \to Y$ such that $f$ coincides with
  $g$ on $A$. In particular, when $A$ is a point, $X$ and $Y$ obtain the structure of based
  spaces and $F(X,Y;g)$ in this case is just the space of based maps.
  If $f\in F(X,Y;g)$ is a choice of basepoint, we let $F(X,Y;g)_{(f)}$ denote
  the path component of $f$.

 An inclusion $A\subset X$ of spaces is a {\it cofibration} if   it satisfies
the homotopy extension property. A {\it Hurewicz fibration} is a map
$ p\: E \to B$
satisfying the homotopy lifting problem for all (compactly generated) spaces.
A {\it map of fibrations} $E\to E'$ over $B$ is a map of spaces which commutes with
projection to $B$.
One says that $p$ is {\it fibre homotopy trivial} if there is a space $F$
and a map $q\:E\to F$ such that $(p,q)\: E \to B \times F$ is a homotopy equivalence.
Given a map $f \: Y \to B$ we write $f^*(p) \: Y\times_B E \to Y$ for the pullback
fibration (i.e., the fibre product).

An {\it $H$-space structure} on a based space $X$ is a map
$m\:X\times X\to X$ whose restriction to $X\times \ast$ and $\ast \times X$
is homotopic to the identity as based maps, where $\ast\in X$ is the basepoint.
If an $H$-space structure on $X$ is understood, we call $X$ an {\it $H$-space.}
One says that $X$ is {\it homotopy associative} if the maps
$m \circ (m\times \text{id})$ and $m  \circ (\text{id}\times m)$ are homotopic.
A {\it homotopy inverse} for $X$ is a map $\iota\: X \to X$ such that the composites
$m \circ (\iota \times \text{id})$ and $m \circ (\text{id} \times \iota)$ are homotopic
to the identity. If $X$ comes equipped with a homotopy associative multiplication
and a homotopy inverse, then $X$ is said to be {\it group-like.} If $X$
is group-like then the set of path components $\pi_0(X)$ acquires a group structure.

\subsection*{Nilpotent Spaces}
If $(X,\ast)$ is a based space
then its higher homotopy groups $\pi_n(X;\ast)$ come equipped with
an action of the fundamental group $\pi = \pi_1(X,\ast)$.
If $X$ is also a connected CW complex, then we say that $X$ is
 {\it nilpotent} if $\pi$ is a nilpotent group and also the action
 of $\pi$ on the higher homotopy groups is nilpotent. The latter condition
 is equivalent to the statement that each $\pi_n(X;\ast)$ possesses
a finite filtration of
$\pi$-modules $M_n(i) \subset M_n(i+1)\subset \cdots$ such that the action on the associated graded
 $M_n(i+1)/M_n(i)$ is trivial.
 More generally, if $X$ is any based connected space, then we will call $X$  {\it nilpotent}
 if $X$ has the homotopy type of a nilpotent CW complex.
 Topological groups having the homotopy type of a connected CW complex are nilpotent,
 since the action of $\pi_1$ in this case is trivial.

\subsection*{Rationalization}

A finitely generated
nilpotent group $K$ admits a rationalization, which is
a natural homomorphism
$ K \longrightarrow K_\QQ$ (\cite[\S2]{HMR}).
The group $K_\QQ$ has the property that the self map
$x\mapsto x^n$ is a bijection for all integers $n \ge 1$ (i.e., $K_\QQ$ is
uniquely divisible).
Furthermore,  $K_\QQ$ is the smallest group having this property
in the sense that any homomorphism from $K$ to a group with this property
uniquely factors through $K_\QQ$. When
$K$ is abelian, there is a natural isomorphism $K_{\QQ} \cong K \otimes \QQ.$

A connected based nilpotent space $X$ admits a {\it rationalization.}  This is a
nilpotent space $X_\QQ$ with rational homotopy (and homology) groups, together with
a natural map $X \to X_\QQ$
 and a natural map ${\ell}_X \colon X \to X_\QQ$ inducing rationalization on homotopy groups
 \cite[thms.\ 3A, 3B]{HMR}.  Again, there is a universal property: if $Y$ is a rational
 space (i.e., a nilpotent space whose homotopy groups are rational), and $f\:X \to Y$ is a map, then
 one has a commutative diagram
 $$
\xymatrix{
 X \ar[r] \ar[d]_f & X_\QQ\ar[d]^{f_{\QQ}}   \\
 Y \ar[r]_\simeq & \, Y_\QQ \,  ,
}
 $$
 where the bottom map is a homotopy equivalence since $Y$ is rational. Consequently,
$f$ factors uniquely up to homotopy through $X_\QQ$, and
in particular  the rationalization of $X$ is uniquely determined up to homotopy
equivalence. More generally, we call a map $f\:X \to Y$ a {\it rationalization} if $Y$ is rational
and the induced map $f_\QQ\: X_\QQ \to Y_\QQ$ is a homotopy equivalence. This is equivalent
to demanding $f^*\:[Y,Z] \to [X,Z]$ be an isomorphism of sets for all
rational nilpotent spaces $Z$.

\section{Section spaces} \label{section_spaces}

Suppose one is given a lifting problem, i.e.,
 a diagram of spaces
$$
\xymatrix{
A \ar[r]^g \ar[d]_{\cap} & E \ar[d]^p \\
X \ar[r]_f \ar@{..>}[ur] & B
}
$$
such that $A \subset X$ is a cofibration and $E\to B$ is a fibration.
Let us denote this lifting problem by ${\mathcal D}$.

Let
$$
\Gamma(\mathcal D)
$$
be the space of solutions to the lifting problem, i.e.,
the space of maps $X \to E$ making the diagram commute. When
$f$ is the identity map and $A$ is trivial, then one obtains the
space of sections of $p$ (it will be denoted by $\Gamma(p)$ in this
instance).

\begin{Pro} \label{Pro:pullback section sequence}
Let $\mathcal D$ be the lifting problem above. Then one has a fibration
$$
\begin{CD}
F(X,E;g) @> p_* >> F(X,B;p\circ g)
\end{CD}
$$
whose fibre at $f$ is given by $\Gamma(\mathcal D)$.
\end{Pro}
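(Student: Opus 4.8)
The plan is to first dispose of the fibre identification, which is immediate, and then to concentrate all the work on showing that the postcomposition map $p_*$ is a Hurewicz fibration. For the fibre: the preimage $p_*^{-1}(f)$ consists of those maps $\tilde f \colon X \to E$ which restrict to $g$ on $A$ and satisfy $p \circ \tilde f = f$, and by the definitions this is exactly the solution space $\Gamma(\mathcal D)$. So the content of the proposition is entirely that $p_*$ is a fibration.

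To prove the latter I would verify the homotopy lifting property directly, against an arbitrary (compactly generated) test space $W$. So suppose given a map $h \colon W \to F(X, E; g)$ and a homotopy $H \colon W \times I \to F(X, B; p\circ g)$ with $H_0 = p_* \circ h$; the goal is to produce a lift $\tilde H \colon W \times I \to F(X, E; g)$ starting at $h$ and covering $H$. The key move is to pass to adjoints using the exponential law: $h$ and $H$ transpose to maps $\hat h \colon W \times X \to E$ and $\hat H \colon W \times I \times X \to B$, and the conditions defining the subspaces $F(X, E; g)$ and $F(X, B; p \circ g)$ become the boundary constraints $\hat h(w,a) = g(a)$ and $\hat H(w,t,a) = p(g(a))$ for $a \in A$, while the starting condition becomes $\hat H(w,0,x) = p(\hat h(w,x))$.

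These data assemble into a single relative lifting problem for $p$:
$$
\xymatrix{
(W \times X \times \{0\}) \cup (W \times A \times I) \ar[r] \ar[d] & E \ar[d]^p \\
W \times X \times I \ar[r]_{\hat H} \ar@{..>}[ur] & B,
}
$$
in which the top map is $\hat h$ on $W \times X \times \{0\}$ and $(w,a,t)\mapsto g(a)$ on $W \times A \times I$; these agree on the overlap by the starting condition, and the square commutes by the constraints above. Transposing a solution back through the exponential law yields exactly the desired $\tilde H$, so it suffices to see that the left-hand inclusion lifts against every fibration. Here I would use that the inclusion factors as $W \times \bigl[(X \times \{0\}) \cup (A \times I)\bigr] \hookrightarrow W \times X \times I$, and that, since $A \subset X$ is a cofibration, the inclusion $(X \times \{0\}) \cup (A \times I) \hookrightarrow X \times I$ is a cofibration and a deformation retract, hence a trivial cofibration; taking the product with $W$ preserves this in the compactly generated category. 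A Hurewicz fibration has the right lifting property against trivial cofibrations, so the dotted arrow exists.

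The hard part will not be the homotopical formalism, which is just the transpose of the relative homotopy lifting property for a (cofibration, fibration) pair, but rather the point-set bookkeeping: I must be sure that the exponential law holds on the nose and that products preserve cofibrations in the stated generality. This is precisely why one works in compactly generated Hausdorff spaces and retopologizes the function spaces $F(X, -)$ as in the Conventions, and those choices are what make the adjunction and the cofibration statement valid.
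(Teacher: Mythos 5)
Your proposal is correct and takes essentially the same approach as the paper: the paper's entire proof is that the fibre over $f$ is clearly $\Gamma(\mathcal D)$ and that $p_*$ is a fibration ``by the exponential law.'' Your argument simply fills in what that phrase means --- transposing the homotopy lifting problem for $p_*$ into a relative lifting problem for $p$ against the trivial cofibration $(X\times \{0\})\cup (A\times I)\hookrightarrow X\times I$, using the standing hypotheses that $A\subset X$ is a cofibration and $p$ is a Hurewicz fibration.
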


\begin{proof} Here $p_*$ is given by mapping a function $a\:X \to E$ to $p\circ a\: X \to B$.
The map $p_*$ is a fibration by the exponential law. The fibre over $f$ is clearly
$\Gamma(\mathcal D)$.
\end{proof}

\subsection*{CW structure}

\begin{Pro}\label{Pro:milnor}
With respect to the diagram ${\mathcal D}$ above, assume that
 $X$ is a compact metric space and suppose $E$ and $B$ have the homotopy type of CW complexes.
Then the section space $\Gamma(\mathcal D)$ has the homotopy type of a CW complex.
\end{Pro}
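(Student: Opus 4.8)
I would prove this by reducing to two standard inputs: (i) Milnor's theorem that $F(K,Y)$ has the homotopy type of a CW complex whenever $K$ is compact metric and $Y$ has the homotopy type of a CW complex, and (ii) the fact that the fibre of a Hurewicz fibration whose total space and base both have the homotopy type of a CW complex again has the homotopy type of a CW complex. As a preliminary remark, since $A\subset X$ is a cofibration it is a closed inclusion, so $A$ is a closed subspace of the compact metric space $X$ and hence is itself compact metric.

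The first step is to apply (i) to the four mapping spaces $F(X,E)$, $F(A,E)$, $F(X,B)$, $F(A,B)$, each of which then has CW homotopy type, since $X$ and $A$ are compact metric and $E$, $B$ have CW homotopy type; passing from the compact-open topology to its compactly generated refinement, as fixed in Section \ref{sec:conventions}, does not change the homotopy type, so Milnor's theorem applies verbatim. Because $A\subset X$ is a cofibration, the restriction maps $F(X,E)\to F(A,E)$ and $F(X,B)\to F(A,B)$ are Hurewicz fibrations whose fibres over $g$ and over $p\circ g$ are exactly $F(X,E;g)$ and $F(X,B;p\circ g)$. Applying (ii) to these two fibrations shows that $F(X,E;g)$ and $F(X,B;p\circ g)$ have CW homotopy type.

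With these in hand, Proposition \ref{Pro:pullback section sequence} exhibits $\Gamma(\mathcal D)$ as the fibre over $f$ of the Hurewicz fibration $p_*\colon F(X,E;g)\to F(X,B;p\circ g)$, whose total space and base have just been shown to have CW homotopy type. A final application of (ii) then gives that $\Gamma(\mathcal D)$ has the homotopy type of a CW complex. At each use of (ii) I would first restrict the fibration over the path component of the relevant basepoint, so that the base is path-connected; the total space stays of CW homotopy type, being a union of path components, and the fibre is unchanged.

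The crux is input (ii), which lies beyond Milnor's theorem itself. I would obtain it from the homotopy-fibre sequence $\Omega Z\to F\to Y$ associated to a map $Y\to Z$: here $\Omega Z$ has CW homotopy type because $Z$ does, so the fibration $F\to Y$ has base and fibre of CW homotopy type, and Sch\"on's theorem (a fibration over a base of CW homotopy type with fibre of CW homotopy type has total space of CW homotopy type) shows that $F$ has CW homotopy type; since the actual fibre of a Hurewicz fibration is homotopy equivalent to the homotopy fibre, the genuine fibre inherits CW homotopy type. The remaining points to verify are routine: that restriction along the cofibration $A\subset X$ really is a Hurewicz fibration in the compactly generated category, and that $p\circ g=f|_A$, so that $(g,f)$ is an admissible choice of basepoint.
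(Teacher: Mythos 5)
Your proposal is correct and follows the paper's own proof essentially verbatim: both arguments apply Milnor's theorem to the relevant function spaces, use the restriction fibrations induced by the cofibration $A\subset X$ together with the fibration $p_*$ of Proposition \ref{Pro:pullback section sequence}, and finish by invoking the fact that the fibre of a Hurewicz fibration whose total space and base have CW homotopy type again has CW homotopy type. The only divergence is in the provenance of that last fact, which the paper simply cites (Sch\"on's Proposition 3, or Kahn's Lemma 2.4), whereas you re-derive it from Sch\"on's base-and-fibre theorem via the homotopy fibre and Milnor's loop-space result --- a valid, slightly longer route to the same lemma.
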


\begin{proof} Restriction $f\mapsto f_{|A}$ defines a fibration
$$
F(X,E) \to F(A,E)
$$
in which both the domain and codomain have the homotopy type of a CW complex
(by \cite[{th.\ 1}]{Mil}). Apply \cite[{prop.\ 3}]{Sch"o} (or \cite[lem. 2.4]{PJK})
to deduce that the fibre $F(X,E;g)$ of this fibration has the
 homotopy type of a CW complex. Repeating this argument
 with the fibration of Proposition \ref{Pro:pullback section sequence} completes the proof.
\end{proof}

\begin{Cor} Let $X$ be a compact metric space and $G$ a topological group
having the homotopy type of a  CW complex.
Let $\zeta$ be a principal $G$-bundle (respectively, principal $PG$-bundle) over $X$.
Then $\GG(\zeta)$ (respectively, $\mathcal P(\zeta)$) has the homotopy type
of a CW complex.
\end{Cor}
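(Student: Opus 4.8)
The plan is to apply Proposition~\ref{Pro:milnor} directly. The statement to prove asserts that when $X$ is compact metric and $G$ has the homotopy type of a CW complex, the gauge group $\GG(\zeta)$ (respectively the projective gauge group $\mathcal P(\zeta)$) has the homotopy type of a CW complex. Since each of these groups is, by definition, a space of sections of an associated bundle---$\GG(\zeta) = \Gamma(\Ad(\zeta))$ and $\mathcal P(\zeta) = \Gamma(\Pad(\zeta))$---the task reduces to verifying that these section spaces fit the framework of Proposition~\ref{Pro:milnor}.

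First I would set up the appropriate lifting problem $\mathcal D$. For $\GG(\zeta)$, take $p \colon \Ad(\zeta) = T\times_G G^{\ad} \to X$ to be the adjoint bundle projection, let $f = \mathrm{id}_X$, and take $A = \emptyset$ (the trivial subspace), so that $\Gamma(\mathcal D) = \Gamma(p)$ is exactly the section space defining $\GG(\zeta)$. The bundle projection $p$ is a fibre bundle with fibre $G$, hence a Hurewicz fibration over the (compact metric, in particular paracompact) base $X$. The identical setup applies to $\Pad(\zeta)$ with fibre $G^{\ad} = G$ acted on by $PG$.

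The key step is to verify the hypotheses of Proposition~\ref{Pro:milnor}: that $X$ is compact metric (given) and that the total space $E = \Ad(\zeta)$ and the base $B = X$ have the homotopy type of CW complexes. The base $X$ is compact metric but need not itself be a CW complex; however, Proposition~\ref{Pro:milnor} as applied requires $E$ and $B$ to have CW homotopy type. Here I expect the main subtlety: one must arrange that the relevant spaces genuinely carry CW homotopy type. The total space $\Ad(\zeta)$ is a fibre bundle over $X$ with fibre $G$; since $G$ has CW homotopy type and $X$ is compact metric (hence an ANR-type base), $\Ad(\zeta)$ is an ANR and therefore has the homotopy type of a CW complex by the Milnor theorem \cite{Mil} combined with the ANR structure. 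Once the CW-homotopy-type hypotheses are checked, Proposition~\ref{Pro:milnor} immediately yields that $\Gamma(\mathcal D)$ has CW homotopy type.

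The hard part will be confirming the CW homotopy type of the total space $\Ad(\zeta)$ (and $\Pad(\zeta)$), since the base $X$ is only compact metric rather than a CW complex; one leverages that a fibre bundle with CW-type fibre over a compact metric base is an ANR, and finite-dimensional (or suitably nice) ANRs have CW homotopy type. With that in hand, applying Proposition~\ref{Pro:milnor} to each of the two section spaces completes the argument for both $\GG(\zeta)$ and $\mathcal P(\zeta)$ simultaneously, as the two cases differ only in the structure group and fibre action but share the same formal section-space description.
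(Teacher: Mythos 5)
Your reduction to Proposition \ref{Pro:milnor} is set up the wrong way around, and the ``hard part'' you flag is not fixable in your formulation. In Proposition \ref{Pro:milnor} the \emph{source} of the lifting problem is allowed to be an arbitrary compact metric space, but the fibration $p\colon E \to B$ must have both $E$ \emph{and} $B$ of CW homotopy type. By taking $B = X$ and $E = T\times_G G^{\ad}$ you have placed the compact metric space $X$ in the role of $B$, where CW homotopy type is required --- and a compact metric space need not have the homotopy type of a CW complex (solenoids, the Hawaiian earring, the Warsaw circle). Your proposed repair does not work either: compact metric does not imply ANR (the Cantor set and the Hawaiian earring are compact metric and not locally contractible, hence not ANRs), and a fibre bundle whose base fails to be locally contractible has total space failing local contractibility as well, so $T\times_G G^{\ad}$ need not be an ANR or have CW homotopy type. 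Note also that the hypothesis on $G$ is only that it has the \emph{homotopy type} of a CW complex, which by itself does not make $G$ an ANR. So in your setup neither of the two CW hypotheses on the fibration can be verified, and the application of Proposition \ref{Pro:milnor} collapses.

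The paper's proof sidesteps exactly this difficulty by changing the lifting problem. Since $X$ is paracompact, the principal bundle $\zeta$ is numerable and hence classified by a map $f\colon X \to BG$, so $\Ad(\zeta)$ is the pullback along $f$ of the universal adjoint bundle $EG\times_G G^{\ad} \to BG$. Sections of a pullback correspond naturally to lifts of the map being pulled back along, so $\GG(\zeta) = \Gamma(\Ad(\zeta))$ coincides with the space of solutions of the lifting problem whose source is $X$ and whose fibration is $EG\times_G G^{\ad} \to BG$. Now the CW hypotheses of Proposition \ref{Pro:milnor} fall on $EG\times_G G^{\ad}$ and $BG$, which do have CW homotopy type because $G$ does, while $X$ is only required to be compact metric --- precisely the division of labor the proposition was designed for. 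The same device, with $BPG$ and $EPG\times_{PG}G^{\ad}$, handles $\mathcal{P}(\zeta)$.
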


\begin{proof}  We only prove the case of the adjoint bundle as the other case
is proved similarly. The fibre bundle $\zeta$
is classified by a map $f\:X \to BG$ by pulling back the universal
principal $G$-bundle $EG \to BG$ along $f$. The space of solutions of the lifting problem
$$
\xymatrix{
 & EG\times_G G^{\ad} \ar[d] \\
X \ar[r]_f \ar@{..>}[ur] & BG
}
$$
coincides with the section space $\Gamma(\Ad(\zeta))$. Furthermore $EG\times_G G^{\ad}$
and $BG$ have the homotopy type of CW complexes because $G$ does. The proof is
completed by applying Proposition \ref{Pro:milnor}.
\end{proof}

\begin{Exa}\label{thm:UAisCW} Suppose that $A$ is a  (separable) unital Banach algebra.
Then the group of invertibles
$GL(A)$ has the homotopy type of a (countable)  CW complex.  If $A$ is a (separable) unital $C^*$-algebra then the group of 
unitaries $UA$ has the homotopy type of a (countable) CW complex.

Here is a proof.
As $UA$ is a deformation retraction of $GL(A)$, they both have the same homotopy type.
 The group $GL(A)$ is an open subset of a   Banach space, and any such open set
 has the homotopy type of a CW complex (cf.\ \cite[cor.\ IV.5.5]{LW}). If $A$ is separable then the  open
 covering involved in the proof of \cite[prop.\ IV.5.4]{LW}
  may be taken to be countable and then
 an obvious modification of \cite[IV.5.5]{LW}
 implies that the CW complex constructed is countable.
 \end{Exa}

\subsection*{Nilpotence}

\begin{Pro} \label{Gamma-nilpotent}
With respect to the hypotheses of Proposition \ref{Pro:milnor}, assume additionally that
$X$ has the homotopy type of a CW complex and that $E$ is a connected nilpotent space. Then each component of 
$\Gamma(\mathcal D)$ is nilpotent.
\end{Pro}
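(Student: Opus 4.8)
The plan is to exhibit each path component of $\Gamma(\mathcal D)$ as the fibre of a fibration whose total space and base are function spaces into nilpotent targets, and then invoke the standard fact that a connected component of a mapping space $F(X,E)$ with $X$ a CW complex and $E$ nilpotent is again nilpotent. Concretely, I would start from the fibration of Proposition \ref{Pro:pullback section sequence},
$$
F(X,E;g) \xrightarrow{\ p_*\ } F(X,B;p\circ g),
$$
whose fibre over $f$ is $\Gamma(\mathcal D)$. The strategy is to show that the relevant components of the two outer terms are nilpotent, and then deduce nilpotence of the fibre from nilpotence of total space and base.

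\emph{First} I would address the total space and base. Since $X$ has the homotopy type of a CW complex and $E$ is connected nilpotent, the Hilton--Mislin--Roitberg theory (as recalled in the excerpt) guarantees that every path component of the free mapping space $F(X,E)$ is a nilpotent space; the same argument applies to the constrained space $F(X,E;g)$, whose components are the fibres of the restriction fibration $F(X,E)\to F(A,E)$ and hence nilpotent as well. For the base $F(X,B;p\circ g)$ one must be slightly more careful because $B$ need only be assumed to have CW homotopy type, not to be nilpotent; here I would use that $E$ is nilpotent together with the fibration $E\to B$ to control $B$ rationally/nilpotently, or restrict attention to the relevant components directly via the long exact sequence.

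\emph{Second}, with the outer components nilpotent, I would run the long exact homotopy sequence of the fibration and apply the standard result that in a fibration $F\to E'\to B'$ of connected spaces with $E'$ and $B'$ nilpotent, the fibre $F$ is nilpotent provided the action of $\pi_1(B')$ is suitably controlled (this is the fibrewise nilpotence package in Hilton--Mislin--Roitberg, \cite[Ch.\ II]{HMR}). The key input is that $\pi_1$ of the base acts nilpotently on the homotopy of the fibre, which follows from $E$ being nilpotent because the $\pi_1(F(X,E;g))$-action and the $\pi_1(\text{base})$-action both ultimately come from the nilpotent $\pi_1(E)$-action on $\pi_*(E)$ transported through evaluation. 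Since $\Gamma(\mathcal D)$ is the fibre, each of its components inherits nilpotence.

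The hard part will be the bookkeeping in the middle step: one must verify that the $\pi_1$-actions on the homotopy groups of the section space are genuinely nilpotent, not merely that each individual homotopy group is a nilpotent group. I expect the cleanest route is to reduce everything to the free mapping space $F(X,E)$, whose components are nilpotent by the cited localization theory, and then to peel off the constraints ($g$ on $A$, and the projection $p$) one fibration at a time, checking at each stage that nilpotence of total space and base forces nilpotence of the fibre. The main obstacle is ensuring the base term $F(X,B;p\circ g)$ is handled correctly despite $B$ not being assumed nilpotent, which I would circumvent by working componentwise and exploiting that the only components of the base that arise are images under $p_*$ of components coming from the nilpotent $E$.
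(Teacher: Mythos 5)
Your skeleton is in fact the paper's: the fibration $F(X,E;g)\to F(X,B;p\circ g)$ of Proposition \ref{Pro:pullback section sequence} with fibre $\Gamma(\mathcal D)$, preceded by the restriction fibration $F(X,E)\to F(A,E)$ with fibre $F(X,E;g)$, and your treatment of the total space (components of $F(X,E)$ nilpotent by Hilton--Mislin--Roitberg, hence components of $F(X,E;g)$ nilpotent) matches the paper's first two steps. The genuine gap is in the key lemma, which you have misstated. The relevant result, \cite[th.~2.2]{HMR}, deduces nilpotence of the components of the \emph{fibre} of a fibration from nilpotence of the \emph{total space} alone; no hypothesis whatsoever on the base is needed. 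The paper applies it twice: once to the column $F(X,E;g)\to F(X,E)\to F(A,E)$ (note that $A$ is merely a closed subspace of a compact metric space, so nothing nilpotent can be said about $F(A,E)$ either), and once, after restricting to components, to the row $\Gamma(\mathcal D)\to F(X,E;g)\to F(X,B;p\circ g)$. The base $F(X,B;p\circ g)$ never enters the argument.

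By instead working with the hybrid statement ``total space and base nilpotent $\Rightarrow$ fibre nilpotent,'' you have created an obligation---nilpotence of the component of $F(X,B;p\circ g)$ at $f$---that is not merely hard but false in general, so no bookkeeping will discharge it. Concretely: take $A=\emptyset$, $X$ a point, $B$ a wedge of two circles, and $E=PB$ the based path space, which is contractible (hence connected and nilpotent) and of CW type. Then $F(X,B;p\circ g)_{(f)}$ is the component of $B$ containing $f(\ast)$, whose fundamental group is free of rank two, while $\Gamma(\mathcal D)=\Omega B$ does have nilpotent components, exactly as the proposition asserts. Your proposed repair---that the only base components arising are in the image of $p_*$---has no force: being hit by $p_*$ selects which component you land in, but its homotopy type is governed by $B$, not by $E$. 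Once you replace your hybrid lemma with the correct total-space-only statement of \cite[th.~2.2]{HMR}, the obstacle you flagged as ``the hard part'' evaporates and your argument collapses to the paper's proof.
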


\begin{proof} Consider the commutative diagram
$$
\xymatrix{
 F(X,E;g) \ar[r] \ar[d] & F(X,B;p\circ g) \ar[d] \\
 F(X,E) \ar[d] \ar[r] & F(X,B) \ar[d] \\
F(A,E) \ar[r] & F(A,B)
}
$$
where the horizontal maps are all fibrations. By \cite[th.\ 2.5]{HMR},
each component of $F(X,E)$ is nilpotent.
It follows that each component of $F(X,E;g)$ is nilpotent by
\cite[th.\ 2.2]{HMR} applied to the left column of the diagram.
After restricting the fibration on the top
line to connected components, it follows again by \cite[th.\ 2.2]{HMR}
that each component of $\Gamma(\mathcal D)$ is nilpotent, since the latter
is a fibre by Proposition \ref{Pro:pullback section sequence}.
\end{proof}

\subsection*{Fibrewise Groups}

\begin{Def} A fibration $p\: E \to B$ is said to be a
{\it fibrewise group}
if it comes equipped with a map $m\:E\times_B E \to E$, a map $i\: E \to E$ and a section
$e\: B \to E$, all compatible with projection to $B$, such that
\begin{itemize}
\item $m$ is associative,
\item $e$ is a two sided unit for $m$,
\item $i$ is an inverse for $m$ and $e$
\end{itemize}
(In other words, $(p,m,e)$ defines a group object in the category of spaces over $B$.)
\end{Def}

If $(p,m,e)$ is a fibrewise group, then the space of sections $\Gamma(p)$ comes equipped
with the structure of a topological group with multiplication defined by pointwise multiplication
of sections.

Here is a recipe for producing fibrewise groups. Suppose $\zeta\:T \to X$ is a principal
$G$-bundle and $F$ is a topological group such that $G$ acts on $F$ through
homomorphisms (this means that one has a homomorphism $G \to \text{aut}(F)$, where
$\text{aut}(F)$ is the topological group consisting of  topological group automorphisms of $F$).
Then the associated fibre bundle
$$
T\times_G F \to X
$$
is easily observed to be a fibrewise group.

An important special case occurs when $F$ is $G^\ad$. We infer  that
$\Ad(\zeta)$ is a fibrewise group and so the gauge group $\GG(\zeta)$
has the structure of a topological group.
Similarly, when $F$ is $G^\ad$ and we let $PG$ act by conjugation, we
infer that  $\mathcal P(\zeta)$ has the structure of a
 topological group.

\begin{Exa}  \label{thm:UA proj gauge}
Returning to the $C^\ast$-algebra setting, we now show that the group
$UA_\zeta$ in the Introduction corresponds to a projective gauge group.

Let $X$ be a compact space and let
\[
\zeta \:T  \to  X
\]
be a principal $PU_n$-bundle
over $X$ with associated $C^\ast $-algebra $A_\zeta $. Then there is an natural
isomorphism of topological groups
\[
UA_\zeta  \,\,\cong\,\, \mathcal{P}(\zeta ). \]

The proof is as follows:
passing from $M_n$ to the subspace $U_n$ of unitaries in
each fibre of $\zeta$ yields a bundle
\[
U\zeta \: T  \times _{PU_n} U_n \to  X\, .
\]
The sections of this bundle are exactly $UA_\zeta $ but it is immediate
that the bundle itself is the bundle $\Pad(\zeta )$.
\end{Exa}

\begin{Rem}
Note that if $f: Y \to X$ is continuous then $f$ induces a map of unital $C^\star $-algebras $f^* : A_\zeta \to A_{f^*\zeta } $ 
and
this restricts to a homomorphism of unitary groups $U(A_\zeta )\longrightarrow U(A_{f^*(\zeta )} )$.  The naturality in the 
result
above is with respect to these maps.
\end{Rem}

\section{Rationalization of topological groups}
\label{sec:groups}
Now suppose that $G$  is a topological group having the homotopy type of
a CW complex.
As rationalization  commutes with products only up to homotopy,
the rationalization of the product structure
gives a map
$$
G_\QQ \times G_\QQ \to G_\QQ
$$
which may fail to be a group structure. It is, however, a group-like $H$-space.

The map $G \to G_\QQ$ is a homomorphism of $H$-spaces in the sense that the diagram
$$
\xymatrix{
G \times G \ar[r] \ar[d] & G \ar[d] \\
G_\QQ\times G_\QQ \ar[r] & G_\QQ
}
$$
commutes up to homotopy (and the homomorphism is compatible with
homotopy associativity).

\subsection*{Homotopy commutativity and rationalization}

\begin{Def} A group-like $H$-space $X$ is {\it homotopy commutative} if the
commutator map $[\,\,,\,\,]\:X \times X \to X$ is null homotopic.
It is said to be {\it rationally homotopy commutative}
if its rationalization $X_\QQ$ is homotopy commutative.
\end{Def}

The commutator map  induces an operation
on homotopy groups called the {\it Samelson product}. After tensoring with
the rationals, one obtains a graded Lie algebra structure (\cite[chap.\ X.5]{Wh}).

\begin{Def}
A homomorphism $X\to Y$ of connected $H$-spaces of CW type is said to be
{\it rational $H$-equivalence} if its rationalization $X_\QQ \to Y_\QQ$
is a homotopy equivalence.
\end{Def}

\begin{Pro}[\bf Scheerer {\cite[cor.\ 1]{Sch}}\rm ] \label{Pro:Scheerer}
Let $X$ and $Y$ be  connected group-like $H$-spaces having the homotopy type of
a CW complex. Then there is a rational $H$-equivalence
$$
X_\QQ \,\, \simeq \,\, Y_\QQ
$$
if and only if  there is an isomorphism of Samelson Lie algebras
$$
\left( \pi_*(X) \otimes \QQ, [ \, , \, ]\right) \,\, \cong  \,\,
\left( \pi_*(Y) \otimes \QQ, [ \, , \, ]\right)\, .
$$
\end{Pro}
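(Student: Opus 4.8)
The plan is to prove the two implications separately, with essentially all of the content residing in the ``if'' direction. For the forward direction, suppose $\phi\colon X_\QQ \to Y_\QQ$ is a homotopy equivalence that is a map of $H$-spaces. Since the commutator map is assembled out of the multiplication and the homotopy inverse, both of which $\phi$ preserves up to homotopy, $\phi$ commutes up to homotopy with the commutator maps of $X_\QQ$ and $Y_\QQ$. The Samelson product $[\alpha,\beta]$ of classes $\alpha,\beta$ is obtained by precomposing the commutator with $\alpha\times\beta$ and using that it is null on the two axes, so that it factors through a smash of spheres; naturality of this construction under $\phi$ gives $\phi_*[\alpha,\beta] = [\phi_*\alpha,\phi_*\beta]$. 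As $\phi$ is an equivalence, $\phi_*\colon \pi_*(X)\otimes\QQ \to \pi_*(Y)\otimes\QQ$ is the required isomorphism of Samelson Lie algebras.

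For the converse, suppose $\theta\colon \pi_*(X)\otimes\QQ \to \pi_*(Y)\otimes\QQ$ is an isomorphism of graded Lie algebras. My first step is to pass from the Samelson Lie algebra to the rational homology Hopf algebra. Because $X$ and $Y$ are connected, group-like (hence rationally homotopy associative) $H$-spaces of CW type, the Milnor--Moore theorem applies: the rational Hurewicz map identifies $\pi_*(G)\otimes\QQ$, equipped with the Samelson bracket, with the space of primitives of $H_*(G;\QQ)$, and $H_*(G;\QQ)\cong U(\pi_*(G)\otimes\QQ)$ as graded Hopf algebras, where $U$ denotes the universal enveloping algebra functor and $G$ is either of $X,Y$ (here one uses that the relevant homology is of finite type, which holds in the cases of interest). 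Applying $U$ to $\theta$ then produces an isomorphism of Hopf algebras $U(\theta)\colon H_*(X;\QQ)\xrightarrow{\cong} H_*(Y;\QQ)$.

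The main obstacle, and the heart of the matter, is to realize this \emph{algebraic} isomorphism by an actual rational $H$-equivalence. The principle to invoke is that the rational homotopy type of a connected group-like $H$-space, \emph{as an $H$-space}, is determined by its homology Hopf algebra. I would carry this out within rational homotopy theory: model each space by its minimal Sullivan model (dually, a graded Lie model), in which the $H$-structure corresponds to a compatible coproduct, and build the comparison map by induction up the rational Postnikov tower. At each stage the obstructions both to extending the map and to making it multiplicative up to homotopy are governed by the coproduct, equivalently by the Samelson bracket, so the agreement of the brackets under $\theta$ forces the successive obstructions to vanish; the resulting (possibly zig-zag of) $H$-map(s) induces $U(\theta)$ on rational homology and is therefore a rational $H$-equivalence $X_\QQ \simeq Y_\QQ$. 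The delicate point throughout is to track the genuinely multiplicative obstructions rather than merely the homotopy-theoretic ones, and it is precisely this bookkeeping that is the substance of Scheerer's argument.
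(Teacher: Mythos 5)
First, a point of reference: the paper does not prove this proposition at all --- it is imported verbatim from Scheerer \cite[cor.\ 1]{Sch}, and the citation \emph{is} the proof. So your attempt has to stand on its own. The forward implication does: for group-like $H$-spaces an $H$-map automatically preserves homotopy inverses up to homotopy (compute in the group $[X_\QQ, Y_\QQ]$), hence preserves the commutator map up to homotopy, and naturality of the Samelson construction then gives an isomorphism of rational Samelson Lie algebras. That half is routine and you have it essentially right.

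The converse, which you correctly identify as carrying all the content, has a genuine gap. The ``principle'' you invoke --- that the rational homotopy type of a connected group-like $H$-space, \emph{as an $H$-space}, is determined by its rational homology Hopf algebra --- is not something you may appeal to: since (granting Milnor--Moore) $H_*(\,\cdot\,;\QQ)\cong U(\pi_*\otimes\QQ)$ and the Lie algebra is recovered as the primitives, the homology Hopf algebra and the Samelson Lie algebra determine each other, so this ``principle'' is exactly the proposition being proved, restated. Your Postnikov-tower sketch is then supposed to discharge it, but the two claims that would make it a proof --- that the obstructions to constructing the map \emph{and} to making it multiplicative lie in groups computed from the coproduct/bracket, and that they vanish when $\theta$ respects brackets --- are asserted, not established; you concede as much when you write that this bookkeeping ``is the substance of Scheerer's argument.'' In other words, the core of your proof is deferred to the reference, which is no more than the paper does by citing \cite{Sch}. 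A secondary but real defect: your Milnor--Moore step invokes a finite-type hypothesis, and your parenthetical claim that finite type ``holds in the cases of interest'' is false for this paper. Proposition \ref{Pro:Scheerer} is applied (in the proofs of Theorems \ref{bigthm:mainC} and \ref{bigthm:mainD}) to gauge groups and function spaces over compact metric spaces, whose rational homotopy groups have the form $\check{H}^*(X;\QQ)\,\widetilde{\otimes}\,(\pi_*(G)\otimes\QQ)$ and can be infinite-dimensional in a given degree (take $X$ to be the Hawaiian earring, say). So a proof of the proposition in the generality the paper actually needs must either avoid the finite-type hypothesis or explain how to remove it; your argument does neither.
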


We observe that the rationalization of such a space $X$ has the homotopy type
of a generalized Eilenberg Mac\,Lane space:
\begin{equation} \label{eq:G_Q} X_\QQ \simeq \prod_{j \geq 1} K(\pi_j(X) \otimes \QQ, j). \end{equation}
However, as pointed out in the introduction,
the multiplication on  $X_\QQ$
need not correspond to the standard multiplication.
We may detect when this
identification is multiplicative  in several ways.

\begin{Pro}[\bf cf.\ {\cite[th.\ 4.25]{LPSS}}\rm ] \label{Pro:H-abelian}  Let $X$ be a homotopy associative $H$-space having the 
homotopy type of a connected CW complex.  Then the following are equivalent:
\begin{itemize}
\item[(a)]  There is a homotopy equivalence
$$X_\QQ   \,\, \overset\sim\to \,\, ( \prod_{j \geq 1} K(\pi_j(X) \otimes \QQ, j) $$
which is also an $H$-map, where  the target has the standard multiplication.
\item[(b)]  The commutator map $X_\QQ \times X_\QQ \to X_\QQ$ is null homotopic.
\item[(c)] The Samelson Lie algebra $(\pi_*(X) \otimes \QQ, [ \, , \,  ])$ is abelian;
i.e., $[ \, , \,  ] = 0.$
\end{itemize}
\end{Pro}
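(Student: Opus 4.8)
The plan is to prove the equivalence of the three conditions
(a), (b), (c) by establishing the cycle of implications
$(a) \Rightarrow (b) \Rightarrow (c) \Rightarrow (a)$, exploiting the
fact that for a connected homotopy associative $H$-space of CW type the
rationalization $X_\QQ$ is a generalized Eilenberg--Mac\,Lane space as
in \eqref{eq:G_Q}, so that every multiplicative phenomenon is detected
on rational homotopy groups via the Samelson product.

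For $(a) \Rightarrow (b)$, I would observe that the standard
multiplication on a product $\prod_j K(\pi_j(X)\otimes\QQ, j)$ is
homotopy commutative, since each Eilenberg--Mac\,Lane factor carries a
homotopy commutative $H$-structure and the product of homotopy
commutative structures is again homotopy commutative. Hence its
commutator map is null homotopic. If $X_\QQ$ is $H$-equivalent to this
product, then the commutator maps correspond under the equivalence, and
therefore the commutator map of $X_\QQ$ is null homotopic as well;
pulling back along $\ell_X \times \ell_X$ shows the commutator map of
$X_\QQ$ is null, which is exactly (b). For $(b) \Rightarrow (c)$, I
would use that the Samelson product on $\pi_*(X)\otimes\QQ$ is induced
by the commutator map $X_\QQ \times X_\QQ \to X_\QQ$ (this is the
content of the remark after the Definition of homotopy commutativity,
citing \cite[chap.\ X.5]{Wh}). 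If that commutator map is null homotopic
then the induced operation on rational homotopy groups vanishes, so the
Samelson bracket is identically zero, i.e.\ the Lie algebra is abelian.

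The implication $(c) \Rightarrow (a)$ is the substantive step and I
expect it to be the main obstacle. Here I would argue that when the
Samelson Lie algebra $(\pi_*(X)\otimes\QQ, [\,,\,])$ is abelian, the
product $\prod_j K(\pi_j(X)\otimes\QQ, j)$ equipped with the standard
multiplication is a connected group-like $H$-space whose Samelson Lie
algebra is also abelian (by the homotopy commutativity of the standard
structure) and is isomorphic, as a graded vector space, to that of $X$.
Since both Lie algebras vanish identically, they are isomorphic as
Samelson Lie algebras. Proposition \ref{Pro:Scheerer} then furnishes a
rational $H$-equivalence
$$
X_\QQ \,\, \simeq \,\, \left( \prod_{j \geq 1} K(\pi_j(X)\otimes\QQ,
j) \right)_\QQ,
$$
and since the target is already rational, its rationalization is a
homotopy equivalence back to itself, yielding the desired $H$-map in
(a). The delicate point is that Scheerer's theorem as stated requires
both spaces to be group-like $H$-spaces, so I would first confirm that
a homotopy associative $H$-space of connected CW type is automatically
group-like after rationalization (a connected $H$-space of CW type
admits a homotopy inverse), ensuring the hypotheses of
Proposition \ref{Pro:Scheerer} are met, and that the isomorphism it
produces can be taken to respect the standard multiplication on the
Eilenberg--Mac\,Lane product rather than merely some abstract
$H$-structure.
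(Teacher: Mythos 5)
Your argument is correct, but note that the paper never actually proves Proposition \ref{Pro:H-abelian}: it is quoted from \cite[th.\ 4.25]{LPSS}, so there is no internal proof to compare against. What you have produced is a legitimate self-contained proof, and it uses exactly the leverage the paper sets up: Proposition \ref{Pro:Scheerer} is stated immediately beforehand precisely because Scheerer's classification of rationalized group-like $H$-spaces by their Samelson Lie algebras is what makes the hard implication $(c)\Rightarrow(a)$ work --- two abelian Samelson algebras on the same graded vector space are trivially isomorphic, and the product of rational Eilenberg-Mac\,Lane spaces with the standard multiplication realizes the abelian algebra. This is also how the cited source argues, so your route is the intended one rather than a genuinely different one. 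The points you flag at the end are the right ones to nail down: $X$, and hence $X_\QQ$, is group-like because the shearing map $(x,y)\mapsto (x,xy)$ induces an isomorphism on homotopy groups, hence (CW type plus Whitehead's theorem) is a homotopy equivalence, and homotopy associativity upgrades the resulting one-sided inverse to a two-sided homotopy inverse. Similarly, in $(a)\Rightarrow(b)$ the phrase ``the commutator maps correspond under the equivalence'' is secretly using that a homotopy equivalence which is an $H$-map of group-like spaces has an $H$-map homotopy inverse, and that $H$-maps of group-like spaces preserve commutators up to homotopy; both are standard, but they are the actual content of that step (your extra sentence about pulling back along $\ell_X\times\ell_X$ is redundant, since (b) already concerns $X_\QQ$). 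The one technicality you share with the paper is treating $\prod_{j\ge 1}K(\pi_j(X)\otimes\QQ,j)$ as a space of CW type, so that Proposition \ref{Pro:Scheerer} applies to it; this is repaired by replacing the full product with the weak product (or any CW model), whose inclusion into the full product is a weak equivalence because the factors are increasingly connected --- the same convention already implicit in (\ref{eq:G_Q}).
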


\begin{Cor}\label{Cor:H-abelian}
Suppose $G$ is a connected topological group such that $BG$ has the rational
homotopy type of a loop space, i.e. there is a based space $Y$ and a rational homotopy
equivalence $BG \simeq_\QQ \Omega Y$. Then $G_\QQ$ is a homotopy commutative $H$-space
and is homotopy equivalent to a product of Eilenberg-Mac\,Lane spaces with standard multiplication.
\end{Cor}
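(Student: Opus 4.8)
The plan is to apply Proposition \ref{Pro:H-abelian} to the topological group $X = G$. Since $G$ is a connected topological group of CW type, it is a homotopy associative $H$-space of connected CW type, so the hypotheses of that proposition are met. The two assertions of the corollary are exactly conditions (b) and (a) of Proposition \ref{Pro:H-abelian}: the first says $G_\QQ$ is homotopy commutative (i.e.\ its commutator map is null homotopic), and the second is the splitting into Eilenberg-Mac\,Lane factors with standard multiplication. Thus it suffices to verify that $G_\QQ$ is homotopy commutative, and the mechanism for this is that the hypothesis makes $G$ \emph{rationally a double loop space}.

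First I would record the multiplicative identification $G \simeq \Omega BG$. For a connected topological group of CW type the canonical map $G \to \Omega BG$ is a homotopy equivalence carrying the group multiplication to loop concatenation up to homotopy, hence an $H$-map; moreover $BG$ is simply connected since $\pi_1(BG) \cong \pi_0(G) = 0$. Next I would loop the hypothesis. Because $BG$ is simply connected, $BG_\QQ$ exists and rationalization commutes with $\Omega$, giving an $H$-equivalence $(\Omega BG)_\QQ \simeq \Omega(BG_\QQ)$. The hypothesis $BG \simeq_\QQ \Omega Y$ gives $BG_\QQ \simeq (\Omega Y)_\QQ$; as a loop space is simple and hence nilpotent, this rationalization makes sense, and $(\Omega Y)_\QQ$ is again a loop space (rationally $\Omega Y$ is as good as simply connected, since $\pi_1(\Omega Y)\otimes\QQ \cong \pi_1(BG)\otimes\QQ = 0$). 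Combining these steps yields an $H$-equivalence
$$
G_\QQ \,\simeq\, \Omega(BG_\QQ) \,\simeq\, \Omega^2(Y_\QQ),
$$
exhibiting $G_\QQ$ as a double loop space up to homotopy, with its $H$-structure corresponding to loop concatenation in the outer coordinate.

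I would then invoke the classical fact that the loop multiplication on a double loop space $\Omega^2 W$ is homotopy commutative: the two loop coordinates supply two $H$-structures sharing a unit and satisfying the interchange law, so the Eckmann--Hilton argument forces each to be homotopy commutative. Since homotopy commutativity is the vanishing of the commutator map and $H$-maps intertwine commutator maps, transporting along the $H$-equivalence above shows $G_\QQ$ is homotopy commutative, which is condition (b). Proposition \ref{Pro:H-abelian} then delivers condition (a), namely a homotopy equivalence of $H$-spaces from $G_\QQ$ to the product of Eilenberg-Mac\,Lane spaces with the standard multiplication, completing the proof.

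The main obstacle I anticipate is bookkeeping the multiplicative structure through the rationalization: one must ensure that $\Omega$ commutes with rationalization here and that the looped equivalence is a genuine $H$-map, which requires staying in the simply connected (or nilpotent) range and handling the possible non--simple connectivity of $Y$ and $\Omega Y$ using that $BG$ is simply connected and $\pi_1(\Omega Y)$ is rationally trivial. If one prefers to avoid the double loop space language, the same conclusion follows by verifying condition (c) directly: the rational Samelson product on $\pi_*(G)$ agrees (up to sign and reindexing) with the rational Whitehead product on $\pi_*(BG)$, and the latter vanishes because $BG$ is rationally an $H$-space, whence the rational Samelson Lie algebra of $G$ is abelian.
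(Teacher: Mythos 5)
Your proof is correct and takes the route the paper intends: the paper states this corollary without a written proof, as an immediate consequence of Proposition \ref{Pro:H-abelian}, and your argument---looping the equivalence $BG \simeq_\QQ \Omega Y$ to exhibit $G_\QQ$ as a double loop space, applying Eckmann--Hilton to get homotopy commutativity, then invoking (b)$\Rightarrow$(a)---is exactly the standard deduction, and matches the looping argument the paper itself uses to prove Proposition \ref{Pro:G properties}. Your alternative route via condition (c) (vanishing of rational Whitehead products on $BG$, hence abelian Samelson Lie algebra for $G$) is equally valid.
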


\begin{Exa}
A topological group $G$ is said to satisfy {\it rational Bott periodicity}
if there is rational homotopy equivalence
$BG\simeq_\QQ \Omega^j G$ for some $j >0$.
Any such $G$ satisfies Corollary \ref{Cor:H-abelian}, and
is consequently rationally homotopy commutative.
In particular, the infinite unitary group $U_\infty$ is rationally homotopy commutative
(cf.\ Remark \ref{about_C*}).
\end{Exa}

\begin{Exa}
Suppose that the topological group $G$ is a direct limit $\Dirlim_n G_n$,
where each $G_n$ is rationally homotopy commutative. Then $G$ is rationally
homotopy commutative. This gives a second proof that $U_\infty$ is
rationally homotopy commutative (cf.\ Remark \ref{about_C*}).
\end{Exa}

 We now discuss the various hypotheses on the group $G$
in  our main results.
Recall from   Theorem \ref{bigthm:mainC}
we require that $G$ be a topological
group of  the homotopy type of a finite complex.
We first observe that
this class includes the connected Lie groups.

\begin{Lem} \label{Pro:G good} Every connected  Lie group $G$ has the homotopy
type of a finite CW complex.   \end{Lem}

\begin{proof}   By  \cite[th.\ A.1.2]{Wh},  $G$ has a maximal compact
subgroup $K$, unique up to conjugacy, such  that the inclusion $K \subset G$
is a homotopy equivalence. Then $K$, being a compact Lie group, has  the homotopy
type of a finite CW complex.
 \end{proof}

 We will make use of the following results whose proofs are classical.

\begin{Pro}\label{Pro:G properties} Suppose that $G$ is a connected, topological group
having the homotopy type of a finite CW complex. Then the following are true:
\begin{enumerate}
\item[(a)] The commutator map $G_\QQ \times G_\QQ \to G_\QQ$ is null homotopic.
\item[(b)] $\pi _2(G)$ is a  finite group.
\item[(c)] The classifying space  $BG$ has the rational homotopy type of a generalized Eilenberg-Mac\,Lane space and in
    particular is rationally homotopy equivalent to a loop space.
\end{enumerate}
\end{Pro}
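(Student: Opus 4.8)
The plan is to deduce all three assertions from a single structural input, namely that a connected topological group $G$ of finite CW type is a \emph{finite $H$-space}, and hence by Hopf's theorem its rational cohomology $H^*(G;\QQ)$ is an exterior algebra on finitely many generators, all of \emph{odd} degree. Combining this with the Cartan--Serre theorem (the rational Hurewicz map identifies $\pi_*(G)\otimes\QQ$ with the primitives of $H_*(G;\QQ)$, which for an exterior Hopf algebra are exactly the odd-degree generators), one concludes that $\pi_*(G)\otimes\QQ$ is concentrated in odd degrees; equivalently, $G_\QQ$ has the homotopy type of a product of rational odd spheres. I would isolate this as the first step, since parts (a)--(c) are then essentially formal consequences.

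For (a), I would invoke the graded Lie algebra structure on $\pi_*(G)\otimes\QQ$ coming from the Samelson product, which sends $\pi_p(G)\otimes\QQ$ and $\pi_q(G)\otimes\QQ$ into $\pi_{p+q}(G)\otimes\QQ$. By the odd-degree concentration from the first step, the bracket of two homogeneous classes of odd degrees $p$ and $q$ lands in degree $p+q$, which is even, where the rational homotopy vanishes. Thus the rational Samelson Lie algebra is abelian, and the equivalence of conditions (b) and (c) in Proposition~\ref{Pro:H-abelian} yields that the commutator map $G_\QQ \times G_\QQ \to G_\QQ$ is null homotopic.

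For (b), I would use that $G$, being a topological group of the homotopy type of a CW complex, is a simple (hence nilpotent) space; as it is moreover of finite CW type it has finitely generated homology, so by the classical finiteness theorem for nilpotent spaces of finite type each $\pi_n(G)$ is finitely generated. In particular $\pi_2(G)$ is a finitely generated abelian group, and since $\pi_2(G)\otimes\QQ = 0$ by the first step (degree two being even), it is finite. For (c), note first that $G$ connected forces $BG$ to be simply connected, with $\pi_n(BG)\otimes\QQ \cong \pi_{n-1}(G)\otimes\QQ$ nonzero only when $n$ is even. A simply connected rational space of finite type whose homotopy is concentrated in even degrees is a generalized Eilenberg--Mac\,Lane space: in its minimal Sullivan model every generator sits in even degree, so the differential would have to carry a degree-$2m$ generator into a decomposable of odd degree $2m+1$; but a product of even-degree generators is even, so there are no odd decomposables and the differential vanishes. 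Hence $BG_\QQ \simeq \prod_i K(\QQ, 2n_i)$, which is a loop space since $K(\QQ,2n) \simeq \Omega K(\QQ, 2n+1)$, giving $BG \simeq_\QQ \Omega(\prod_i K(\QQ, 2n_i+1))$.

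I expect the only genuinely substantive step to be the odd-degree concentration of $\pi_*(G)\otimes\QQ$, which rests on Hopf's theorem together with the Cartan--Serre identification; once that is in hand, (a) is immediate from the degree count and Proposition~\ref{Pro:H-abelian}. The subtlest point to state carefully in (b) is the finite generation of $\pi_2(G)$, which requires the nilpotence and finite-type hypotheses rather than merely the rational vanishing. In (c) the minimal-model parity argument is the one place where I would lean on rational homotopy theory directly rather than on a citation, though the absence of odd decomposables makes it short; an alternative would be to deloop the $H$-space splitting $G_\QQ \simeq \prod_i K(\QQ,2n_i-1)$, but this requires more care about the multiplicative structure, so I would prefer the minimal-model route.
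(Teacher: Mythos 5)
Your proposal is correct, but it reaches the three conclusions by a genuinely different route than the paper, even though both start from the same Hopf--Milnor--Moore input that $H^*(G;\QQ)$ is an exterior algebra on finitely many odd-degree generators. The paper immediately transfers this to the classifying space: $H^*(BG;\QQ)$ is polynomial on even generators, each generator is realized by a map $x_i\: BG \to K(\QQ,n_i)$, and the product $f = \prod x_i$ is a rational homotopy equivalence. This single map does double duty: it gives (c) outright, and looping $f$ exhibits $G_\QQ$ as $H$-equivalent to a product of Eilenberg-Mac\,Lane spaces with the standard multiplication, so (a) follows from Proposition \ref{Pro:H-abelian}. You instead stay on the group side, extract the odd-degree concentration of $\pi_*(G)\otimes\QQ$ via the Cartan--Serre/Milnor--Moore identification with primitives, and then treat each part by a parity argument: the Samelson bracket of odd classes lands in even degrees for (a), and the minimal Sullivan model of $BG$ has no odd decomposables for (c). For (b) the difference is sharpest: the paper passes to the universal cover and uses the Hurewicz isomorphism $\pi_2(G)\cong H_2(G;\ZZ)$ together with vanishing of rational cohomology in degrees $1$ and $2$ (so no appeal to finiteness theorems for nilpotent spaces), whereas you invoke finite generation of $\pi_2(G)$ from Serre-type finiteness for simple spaces of finite type and then quote $\pi_2(G)\otimes\QQ = 0$. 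What the paper's route buys is economy of machinery --- one explicit map yields both (a) and (c), and (b) is elementary; what your route buys is conceptual transparency --- all three statements are visibly consequences of the single structural fact that finite $H$-spaces have odd rational homotopy --- at the cost of importing three separate tools (Milnor--Moore primitives, Sullivan models, and the finiteness theorem). Both arguments are complete and correct; note also that the paper deduces (a) from the map constructed for (c), while your (a) is independent of (c).
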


\begin{proof}
The basic results of Milnor and Moore \cite{MM} on the structure of
Hopf algebras of characteristic zero imply $H^*(G; \QQ)$ is an  exterior algebra on a finite number of
odd degree generators. It follows that $H^*(BG;\QQ)$ is a polynomial algebra on
a finite number of generators of even degree. Represent each generator by a map
$x_i\: BG \to K(\QQ,n_i)$. Then the product map $f = \prod x_i \: BG \to \prod_i K(\QQ ,n_i)$
gives an isomorphism on rational homotopy groups. This proves (c). Applying the loop space
functor to the map $f$ gives (a) by Proposition \ref{Pro:H-abelian}.

To compute $\pi _2$ we may pass to universal covers
and hence assume that the groups are simply connected. Then
$H^*(G;\QQ ) = 0$ in degrees 1 and 2. This implies that
$H_2(G;\ZZ )$
is a finite group. The Hurewicz map $\pi _2(G) \to H_2(G;\ZZ ) $
is an isomorphism, so $\pi _2(G)$ is a finite group.
\end{proof}

\begin{Cor} \label{Cor:G properties} If $G$ is a connected topological group
having the homotopy type of a finite CW complex, then $G_{\QQ}$ is a homotopy commutative
$H$-space. In particular, $G_\QQ$ is homotopy equivalent as an $H$-space
to a product of Eilenberg-Mac\,Lane spaces with standard multiplication.
\end{Cor}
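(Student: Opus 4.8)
The plan is to deduce this directly from the two immediately preceding results, Proposition~\ref{Pro:G properties} and Proposition~\ref{Pro:H-abelian}, so that essentially no new argument is needed beyond matching up the hypotheses. First I would observe that since $G$ is a topological group having the homotopy type of a connected CW complex, it is in particular a homotopy associative (indeed group-like) $H$-space of connected CW type. Thus Proposition~\ref{Pro:H-abelian} applies with $X = G$, and its three conditions (a), (b), (c) are equivalent in this case. I would also recall from Section~\ref{sec:groups} that the rationalized product endows $G_\QQ$ with the structure of a group-like $H$-space, so that the commutator map of $G_\QQ$ is defined and the notion of homotopy commutativity makes sense for it.

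Next I would feed in Proposition~\ref{Pro:G properties}(a), which asserts precisely that the commutator map $G_\QQ \times G_\QQ \to G_\QQ$ is null homotopic. By the definition of homotopy commutativity given in Section~\ref{sec:groups}, the vanishing of this commutator map says exactly that $G_\QQ$ is a homotopy commutative $H$-space, which is the first assertion of the corollary. The null homotopy of the commutator is also literally condition (b) of Proposition~\ref{Pro:H-abelian} for $X = G$, so invoking the implication (b)$\Rightarrow$(a) produces a homotopy equivalence
$$
G_\QQ \,\, \overset{\sim}{\to} \,\, \prod_{j \ge 1} K(\pi_j(G)\otimes\QQ,\, j)
$$
which is simultaneously an $H$-map, the target carrying the standard multiplication. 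This gives the second assertion.

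The only step demanding genuine content is the input Proposition~\ref{Pro:G properties}(a), and that work is already carried out upstream: by Milnor--Moore, $H^*(G;\QQ)$ is an exterior algebra on odd generators, whence $H^*(BG;\QQ)$ is polynomial on even generators, and looping the resulting rational equivalence $BG \to \prod_i K(\QQ, n_i)$ trivializes the commutator via Proposition~\ref{Pro:H-abelian}. Given this, the corollary itself is a purely formal consequence, and I do not expect any real obstacle at this stage; the substance lies entirely in establishing part (a) of the preceding proposition rather than in the deduction I am sketching here.
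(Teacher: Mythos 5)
Your proposal is correct and matches the paper's intended argument: the paper states this corollary without proof precisely because it is the formal consequence you describe, combining Proposition~\ref{Pro:G properties}(a) with the definition of homotopy commutativity and the implication (b)$\Rightarrow$(a) of Proposition~\ref{Pro:H-abelian}. Your attention to the point that $G_\QQ$ is only group-like (so that the commutator map is defined up to homotopy) is a sensible precaution, and all the substantive content indeed lives in Proposition~\ref{Pro:G properties}, exactly as you say.
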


Finally, in Theorem \ref{bigthm:mainD} we restrict to the class of compact Lie groups $G$.
This restriction is chosen to govern the rational homotopy theory of $PG$ as we
explain now.  First we have the following general fact.

\begin{Lem} \label{Pro:PG properties} Let $G$ be a  connected Lie group. Then
\begin{enumerate}
 \item[(a)] $PG$ is a connected Lie group;
 \item[(b)] the inclusion $Z(G) \to G$
induces a monomorphism 
\[
\pi_1(Z(G)) \otimes \QQ  \to \pi_1(G) \otimes \QQ.
\]
\end{enumerate}
\end{Lem}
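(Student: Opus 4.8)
The plan is to handle the two parts in sequence, using (a) as the input for a short argument for (b).

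For (a), I would first record that the center $Z(G)$ is a closed normal subgroup of $G$: normality is automatic (the center is even characteristic), and closedness follows by writing $Z(G) = \bigcap_{g\in G}\{x : xgx^{-1}g^{-1}=e\}$ as an intersection of closed sets. The closed-subgroup theorem together with the quotient-manifold theorem then make $PG = G/Z(G)$ into a Lie group for which the projection $q\colon G \to PG$ is a smooth surjective homomorphism. Connectedness of $PG$ is immediate, since it is the continuous image of the connected space $G$.

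For (b), the key observation is that $q$ is a principal $Z(G)$-bundle, so
$$ Z(G) \longrightarrow G \overset{q}{\longrightarrow} PG $$
is a fibration whose fibre inclusion at the basepoint is precisely the subgroup inclusion $Z(G)\hookrightarrow G$. Its long exact homotopy sequence contains the segment
$$ \pi_2(PG) \overset{\partial}{\longrightarrow} \pi_1(Z(G)) \longrightarrow \pi_1(G), $$
in which the second map is the one appearing in the statement. I would then invoke the classical theorem of \'E.\ Cartan that the second homotopy group of any Lie group vanishes; applied to the Lie group $PG$ furnished by (a), this gives $\pi_2(PG)=0$, so $\partial$ is the zero map and $\pi_1(Z(G)) \to \pi_1(G)$ is injective (in fact integrally). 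Tensoring with the flat $\ZZ$-module $\QQ$ preserves this injectivity and yields the desired monomorphism.

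The only genuine subtlety is the appeal to $\pi_2(PG)=0$: this vanishing holds for arbitrary, possibly noncompact, connected Lie groups, which is exactly why (a) is proved first---to guarantee that $PG$ is a bona fide Lie group to which Cartan's theorem applies. Should one prefer to avoid $\pi_2$ altogether, there is a more hands-on alternative for (b): pass to a maximal compact subgroup $K \hookrightarrow G$ (a homotopy equivalence, by Lemma \ref{Pro:G good}), note that the central torus $Z(G)_0 \cap K$ lands in $Z(K)_0$, and use the structural splitting $K = Z(K)_0\cdot [K,K]$ with $[K,K]$ semisimple---hence of finite $\pi_1$---to see that $\pi_1(Z(K)_0)\otimes\QQ \to \pi_1(K)\otimes\QQ$ is an isomorphism, after which injectivity reduces to the evident injectivity on $\pi_1$ of an inclusion of subtori. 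The fibration argument is shorter, so I would present it as the main line.
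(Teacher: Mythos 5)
Your proof is correct and takes essentially the same route as the paper: (a) via the quotient Lie group construction, and (b) via the long exact homotopy sequence of the bundle $Z(G) \to G \to PG$, killing the boundary map out of $\pi_2(PG)$. The only difference is that the paper does not invoke Cartan's vanishing theorem $\pi_2(PG)=0$; it instead applies its own Proposition \ref{Pro:G properties} (legitimate for $PG$ by Lemma \ref{Pro:G good}) to get that $\pi_2(PG)$ is merely \emph{finite}, which already suffices since only rational injectivity is claimed.
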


\begin{proof} $PG$ is a Lie group and it is connected since it is a quotient space of $G$.
By Proposition \ref{Pro:G properties},  $\pi _2(PG)$ is a finite group which implies the second statement.
\end{proof}

The preceding results imply in particular that  $G$ and $PG \times Z(G)$ have the same
homotopy type after rationalization.
Note, however, that there is no obvious map in either direction. We need to sharpen
this identification for our proof of Theorem \ref{bigthm:mainD}.
The following   classical fact due to E. Cartan explains our restriction there to compact Lie groups.

\begin{Pro} \label{Pro:G_0} Let $G$ be a compact,  connected Lie group. Then  there is  a    compact, connected Lie group $G_0$
such that the following hold.
\begin{enumerate}
\item[(a)] There is  a homomorphism $q \: G_0  \to G$ which is a rational homotopy equivalence.
\item[(b)] There is a splitting $G_0 \cong P(G_0) \times Z(G_0)$
\item[(c)]  The map $q$ carries $Z(G_0)$ to $Z(G)$ and induces an  isomorphism $P(G_0) \cong  PG.$  \end{enumerate}
\end{Pro}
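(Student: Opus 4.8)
The plan is to invoke the classical structure theory of compact connected Lie groups and to realize $G_0$ as an honest direct product of a simply connected semisimple factor and the central torus. Write $G' = [G,G]$ for the derived group, a compact connected semisimple Lie group, and $T = Z(G)\idc$ for the identity component of the center, which is a torus. By Weyl's theorem the universal cover $\widetilde{G'}$ of a compact semisimple group is again compact (equivalently $\pi_1(G')$ is finite), so I would set
$$
G_0 \,\,:=\,\, \widetilde{G'} \times T\, ,
$$
a compact connected Lie group, and define $q\: G_0 \to G$ by $q(s,t) = p(s)\cdot t$, where $p\:\widetilde{G'} \to G' \hookrightarrow G$ is the covering homomorphism onto the derived group.

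First I would verify (a). Since $T$ is central in $G$, a one-line computation shows $q$ is a homomorphism. It is surjective because $G = G'\cdot T$ (the standard presentation of a compact connected group as the almost-direct product of its derived group and its central torus), and its kernel lies in $\{(s,t) : p(s)\in G'\cap T\}$, which is finite because both $\ker p = Z(\widetilde{G'})$ and $G'\cap T$ are finite. Thus $q$ is an isogeny: it is an isomorphism on Lie algebras, hence an isomorphism on $\pi_n$ for $n\ge 2$ and an injection with finite cokernel on $\pi_1$. As compact Lie groups are nilpotent spaces, tensoring with $\QQ$ kills the finite cokernel and $q$ becomes an isomorphism on every rational homotopy group, so $q$ is a rational homotopy equivalence.

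Next I would establish (c). The center of a product is the product of the centers, so $Z(G_0) = Z(\widetilde{G'}) \times T$. Covering homomorphisms carry centers into centers, so $q\big(Z(\widetilde{G'})\big)\subseteq Z(G')$, and since anything central in $G'$ commutes with $G'\cdot T = G$ we have $Z(G')\subseteq Z(G)$; together with $T\subseteq Z(G)$ this yields $q(Z(G_0))\subseteq Z(G)$. Passing to quotients, $q$ induces a homomorphism $\bar q\: P(G_0)\to PG$, where $P(G_0)=G_0/Z(G_0)\cong \widetilde{G'}/Z(\widetilde{G'})$ is the adjoint group of $G'$ and $PG\cong G'/Z(G')$ is the \emph{same} adjoint group. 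The map $\bar q$ is surjective with finite kernel between two centreless groups of equal dimension, and a finite normal subgroup of a centreless group is trivial, so $\bar q$ is an isomorphism. This gives (c).

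The remaining point, the splitting (b), is the step I expect to be the crux, and the one to handle with care in the write-up. The delicate issue is the direction of $q$: one cannot take the semisimple factor of $G_0$ to be $PG$ itself, because in general there is no homomorphism $PG\to G$ covering the semisimple part (for example there is no rational-equivalence homomorphism $SO(3)\to SU(2)$), so $q$ could not run from $G_0$ to $G$. Cartan's theorem is precisely what lets us replace $PG$ by the simply connected compact cover $\widetilde{G'}$, for which the covering homomorphism $\widetilde{G'}\to G'$ does exist; the price is the finite group $Z(\widetilde{G'})$, which is rationally invisible and is exactly what is absorbed into $Z(G_0)$ in the computation above. Thus the genuine direct-product decomposition is $G_0=\widetilde{G'}\times T$, whose projective quotient is $P(G_0)\cong PG$ and whose central torus maps into $Z(G)$; after rationalization this is the splitting $G_0\simeq_\QQ PG\times Z(G)$ that feeds into Theorem \ref{bigthm:mainD}. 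The careful point to pin down is exactly this reconciliation of the product structure with the requirement that $q$ run from $G_0$ to $G$ and carry $Z(G_0)$ into $Z(G)$, since it forces the use of the compact simply connected cover rather than the adjoint form as the semisimple factor.
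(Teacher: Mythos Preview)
Your verification of (a) and (c) is correct, and your construction $G_0 = \widetilde{G'} \times T$ is essentially the one the paper invokes (it simply cites the structure theorem in Whitehead's Appendix~A and declares that ``the results follow directly''). The gap is at (b): you never establish the splitting $G_0 \cong P(G_0) \times Z(G_0)$, and for your $G_0$ it is in fact false. Take $G = SU(2)$: then $G' = G$, $T = \{e\}$, so $G_0 = SU(2)$, whence $P(G_0) = SO(3)$, $Z(G_0) = \ZZ/2$, and $SU(2) \not\cong SO(3) \times \ZZ/2$. What your final paragraph actually proves is only the \emph{rational} splitting $G_0 \simeq_\QQ PG \times Z(G)\idc$, which is strictly weaker than the group isomorphism asserted in (b).

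This is not a defect of your particular choice of $G_0$: the same example shows that (b) as stated admits no solution for $G = SU(2)$. Any compact connected $G_0$ with $P(G_0) \cong SO(3)$ and $G_0 \cong P(G_0)\times Z(G_0)$ must be $SO(3) \times T^k$ (the center being connected), and matching rational $\pi_1$ forces $k = 0$; but there is no nontrivial homomorphism $SO(3) \to SU(2)$. The paper's one-line proof rests on the assertion that the simply connected semisimple factor has \emph{trivial} center, and that is exactly the claim that fails---the center is finite but need not be trivial. So both your argument and the paper's founder on the same point: the Proposition as written is not correct. Since Case~1 of Theorem~\ref{thm:mainD} uses (b) to lift a principal $PG$-bundle to a principal $G_0$-bundle, and that lift is genuinely obstructed (by a class in $H^2(X;Z(\widetilde{G'}))$ in your setup), a correct treatment requires reformulating either the Proposition or the reduction argument there; the product decomposition $G_0 = \widetilde{G'} \times T$ you do establish is the right starting point for such a repair.
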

\begin{proof}
By \cite[th.\ A.1.1]{Wh}, $G$ has a finitely-sheeted covering group $q \: G_0 \to G$ with
$G_0 = T^\ell \times  G'$, where  $G'$ is a simply connected compact Lie group with trivial center and
$T^\ell$ is the product of $\ell$-copies of $S^1$.
The results follow directly.
\end{proof}

%
%
\section{Preliminary results: finite complexes} \label{sec:prelims}
In this section, we prove Theorems \ref{bigthm:mainB}, \ref{bigthm:mainC} and \ref{bigthm:mainD}
 when $X$ is a finite CW complex.
For Theorem \ref{bigthm:mainB}, the result is a direct consequence of classical work of Thom and a localization theorem for 
function spaces due to Hilton, Mislin and Roitberg.
The proof of Theorem \ref{bigthm:mainC} makes use of Gottlieb's identity \cite[th.\ 1]{Got} for the gauge group in addition to 
the previous ingredients. We deduce Theorem \ref{bigthm:mainD}
from  Theorem \ref{bigthm:mainC} and Proposition \ref{Pro:G_0}.

First, we  have the famous result of H. Hopf, generalized by Thom \cite{Thom}:
\begin{equation} \label{eq:Thom}
\pi_q(F(X, K(\pi, p))) \,\, = \,\,H^{p-q}(X; \pi)\, .
\end{equation}
Next we have the results \cite[th.\ II.3.11, cor.\ II.2.6]{HMR}
of Hilton-Mislin-Roitberg on the function space $F(X, Y)$:

\begin{Pro}[\bf Hilton-Mislin-Roitberg {\cite{HMR}}\rm ] \label{Pro:HMR}
Let  $X$ be  a finite CW complex and $Y$ be a connected nilpotent space.
Then the induced map
 $$
F(X, Y) \to F(X, Y_\QQ)
$$
is a rationalization map on connected components.
\end{Pro}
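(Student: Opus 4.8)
The plan is to prove the statement by induction up a principal refinement of the Postnikov tower of $Y$, reducing the whole assertion to the Eilenberg--Mac\,Lane case, where it follows from Thom's computation \eqref{eq:Thom}. Throughout, ``rationalization on connected components'' is to be read as: after fixing a map $f\:X\to Y$ and passing to the component of $f$ in $F(X,Y)$ and the component of $\ell_Y\circ f$ in $F(X,Y_\QQ)$, each such component is nilpotent (by \cite[th.\ 2.5]{HMR}) and the restricted map is a rationalization. So the two things to verify at each stage are nilpotence of the relevant components and that the induced map is localization on homotopy groups.

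For the base case I would take $Y=K(A,m)$ with $A$ abelian. Then \eqref{eq:Thom} gives $\pi_q(F(X,K(A,m)))\cong \Check H^{m-q}(X;A)$ (ordinary cohomology, since $X$ is a finite complex), so $F(X,K(A,m))$ is a product of Eilenberg--Mac\,Lane spaces and is in particular nilpotent. Because $X$ is finite its integral homology is finitely generated, so by the universal coefficient theorem the natural map $H^{m-q}(X;A)\otimes\QQ\to H^{m-q}(X;A\otimes\QQ)$ is an isomorphism in every degree, including $q=0$ where it governs $\pi_0$. Since $K(A,m)_\QQ\simeq K(A\otimes\QQ,m)$, the map induced by $\ell_Y$ is therefore, on each homotopy group, exactly the localization $H^{m-q}(X;A)\to H^{m-q}(X;A)\otimes\QQ$, which identifies it as a rationalization. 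The finiteness of $X$ (not any hypothesis on $Y$) is what makes cohomology commute with localization here.

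For the inductive step I would use that a connected nilpotent $Y$ admits a principal refinement $\ast=Y_0\leftarrow Y_1\leftarrow\cdots$ of its Postnikov tower, with each $Y_n\to Y_{n-1}$ a principal fibration with fibre $K(A_n,m_n)$ obtained by pulling back the path--loop fibration over $K(A_n,m_n+1)$, and with $m_n\to\infty$. Applying $F(X,-)$ preserves these pullbacks, so by Proposition \ref{Pro:pullback section sequence} (the exponential law) one obtains fibrations
\[
F(X,K(A_n,m_n))\longrightarrow F(X,Y_n)\longrightarrow F(X,Y_{n-1}),
\]
together with a map of such towers into the rationalized tower $F(X,(Y_n)_\QQ)$. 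The components are nilpotent by \cite[th.\ 2.5]{HMR}, so the localization theory of nilpotent fibrations applies; assuming inductively that the fibre and base maps are rationalizations, the fibre lemma for nilpotent fibrations \cite[th.\ 2.2]{HMR} shows $F(X,Y_n)\to F(X,(Y_n)_\QQ)$ is a rationalization on each component. Finally, since $X$ is finite-dimensional, $H^{m_n-q}(X;-)=0$ once $m_n>q+\dim X$, so in each fixed degree the tower $\pi_q F(X,Y_n)$ stabilizes; hence $F(X,Y)=\Invlim_n F(X,Y_n)$ has vanishing $\Invlim^1$ and its homotopy groups are the stable values (and likewise rationally), so the statement passes to the limit.

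The hard part will not be the Eilenberg--Mac\,Lane computation but the bookkeeping of path components across the induction: one must ensure that the map on $\pi_0$ (essentially $[X,Y]\to[X,Y_\QQ]$, where $[X,Y]$ need not be a group) and the fibrewise rationalization cohere so that the nilpotent fibre lemma genuinely applies component-by-component, and that the possibly non-simple $\pi_0$-action is compatible with nilpotence. This is precisely the content that Hilton, Mislin and Roitberg package into \cite[th.\ 2.2, th.\ 2.5]{HMR}, and it is the step where care is required; indeed the cleanest route is to invoke their theorems directly rather than to re-derive the component-wise fibre lemma here.
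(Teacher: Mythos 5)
The paper contains no proof of this proposition: it is imported verbatim from the Hilton--Mislin--Roitberg book (their th.~II.3.11 and cor.~II.2.6), so there is no internal argument to compare yours against; the honest comparison is with HMR's own proof. Your route is genuinely different from theirs. HMR induct over the cells of $X$: writing $X = X' \cup e^n$, restriction gives a fibration $F(X,Y) \to F(X',Y)$ whose fibres are (torsors over) $\Omega^n Y$, and the cell-by-cell induction uses the theorem that localization preserves fibrations of nilpotent spaces. You instead induct over a principal refinement of the Postnikov tower of $Y$, which makes the Eilenberg--Mac\,Lane case and Thom's formula \eqref{eq:Thom} the engine, at the price of a limit argument over the tower. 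Your base case is correct --- finiteness of $X$ is exactly what makes $H^{m-q}(X;A)\otimes\QQ \to H^{m-q}(X;A\otimes\QQ)$ an isomorphism --- and the stabilization argument using $\dim X < \infty$ (so that $\pi_q$ of the fibres vanishes for $m_n$ large, giving Mittag--Leffler and vanishing $\lim^1$) is a viable way to descend the conclusion from the stages $Y_n$ to $Y$.

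The genuine problem is in your final paragraph. The component-wise coherence you defer is \emph{not} ``packaged into'' \cite[th.\ 2.2, th.\ 2.5]{HMR}: those are nilpotence statements (the fibre lemma for nilpotence, and nilpotence of components of $F(X,Y)$ for $X$ finite) and say nothing about localization. What your inductive step actually requires is the theorem that rationalization preserves fibrations of nilpotent spaces --- cited elsewhere in this paper as \cite[th.\ 3.12]{HMR} --- applied to the fibration $F(X,Y_n) \to F(X,Y_{n-1})$ after restricting to the components of $f_n$ and $\ell\circ f_n$, together with the $\pi_0$ analysis you allude to: the fibre over $f_{n-1}$, intersected with the component of $f_n$, need not be connected, and the restriction of the fibration to chosen components need not be fibre-homotopy equivalent to the model $F(X,K(A_n,m_n))$ in a way compatible with rationalization. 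That analysis can be done, but if instead you ``invoke their theorems directly'' in the sense of the function-space localization theorem itself, the argument is circular, since that is precisely the proposition being proved. So: replace the citation of th.~2.2/2.5 by the fibration localization theorem and carry out the component bookkeeping explicitly; as written, the crux of the induction is deferred under a misattributed reference.
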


{\flushleft (}More precisely, for any map $h\: X \to Y$, the map $F(X,Y)_{(h)} \to
F(X,Y)_{(\ell_Y\circ h)}$ is a rationalization map.)

\begin{Pro}[cf.\ {\cite[th.\ 4.28]{LPSS}}]  \label{Pro:Hnil}
Let  $G$ be a topological group having the homotopy type
of a finite connected CW complex.
Let $X$ be a finite CW complex.  Then the rationalization of
$F(X, G)\idc$ is a homotopy commutative $H$-space.
\end{Pro}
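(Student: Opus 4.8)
The plan is to identify the rationalization of $F(X,G)\idc$ with a component of the function space $F(X, G_\QQ)$, and then to observe that commutators in this function space are computed pointwise, so that their vanishing is inherited from the vanishing of the commutator in $G_\QQ$.

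First I would invoke Proposition \ref{Pro:HMR}. Since $G$ is a connected topological group of the homotopy type of a finite CW complex, it is in particular a connected nilpotent space, and $X$ is a finite CW complex, so the map $F(X, G) \to F(X, G_\QQ)$ induced by the rationalization $\ell_G \: G \to G_\QQ$ is a rationalization on each component. Taking the component of the constant map, this exhibits $F(X, G_\QQ)\idc$ as the rationalization of $F(X, G)\idc$. Because $\ell_G$ is a homomorphism of $H$-spaces (as recorded in Section \ref{sec:groups}) and $F(X, -)$ is functorial and product-preserving, the induced map $F(X, G) \to F(X, G_\QQ)$ is itself an $H$-map; hence the $H$-structure on the rationalization of $F(X, G)\idc$ coming from the group structure on $F(X,G)\idc$ agrees up to homotopy with the pointwise $H$-structure on $F(X, G_\QQ)\idc$ induced from the group-like $H$-space $G_\QQ$. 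It therefore suffices to prove that $F(X, G_\QQ)\idc$ is homotopy commutative.

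For this I would use the exponential-law identification $F(X, G_\QQ) \times F(X, G_\QQ) \cong F(X, G_\QQ \times G_\QQ)$, under which the commutator map of the group-like $H$-space $F(X, G_\QQ)$ is carried to $F(X, \gamma)$, where $\gamma \: G_\QQ \times G_\QQ \to G_\QQ$ is the commutator map of $G_\QQ$. By Corollary \ref{Cor:G properties} (equivalently Proposition \ref{Pro:G properties}(a)) the map $\gamma$ is null homotopic, so $F(X, \gamma)$ is null homotopic as well. Restricting to the identity component shows that the commutator map of $F(X, G_\QQ)\idc$ is null homotopic, i.e.\ $F(X, G_\QQ)\idc$ is homotopy commutative. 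Combined with the previous paragraph, this proves that the rationalization of $F(X,G)\idc$ is a homotopy commutative $H$-space.

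The only real subtlety, and the step I would take most care over, is the bookkeeping needed to match $H$-structures: one must verify both that the Hilton--Mislin--Roitberg rationalization map is an $H$-map and that it carries the identity component of $F(X,G)$ to the identity component of $F(X, G_\QQ)$, so that the homotopy-commutative structure established on $F(X, G_\QQ)\idc$ really is the one induced on the rationalization of $F(X,G)\idc$. Once the exponential-law identification of the commutator with $F(X, \gamma)$ is in hand, homotopy commutativity itself is immediate from the vanishing of $\gamma$.
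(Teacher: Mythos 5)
Your proof is correct and takes essentially the same approach as the paper: both identify $F(X, G_\QQ)\idc$ as the rationalization of $F(X,G)\idc$ via Proposition \ref{Pro:HMR} and deduce its homotopy commutativity from that of $G_\QQ$ (Corollary \ref{Cor:G properties}). The paper simply states ``it follows that $F(X,G_\QQ)\idc$ is homotopy commutative'' without spelling out the exponential-law and $H$-map bookkeeping that you make explicit.
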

\begin{proof} Recall that $F(X, G)\idc$ is the component of $F(X,G)$
containing  the constant map. By  Corollary  \ref{Cor:G properties}, $G_{\QQ}$ is homotopy commutative.
It follows that $F(X,G_{\QQ})\idc$ is also homotopy commutative.
The result now follows from observing that
$F(X,G_{\QQ})\idc$ is the rationalization of $F(X,G)\idc$ by Proposition \ref{Pro:HMR}.
\end{proof}

 \begin{Rem}  The preceding result holds in greater generality: for
  $F(X, G)$ to be rationally homotopy commutative, one only
needs to assume that G is a rationally homotopy
commutative  $H$-space (see \cite[th.\ 4.10]{LPSS}).
\end{Rem}

We can now prove Theorem \ref{bigthm:mainB}    under the assumption
that  $X$ is a finite complex as in \cite[th.\ 4.28]{LPSS}.

\begin{Thm}[\bf Preliminary version of Theorem \ref{bigthm:mainB}\rm ] \label{thm:mainB}
Let $X$ be a finite CW complex and
let  $G$ be a connected topological group having the homotopy type of a finite CW complex.
Then
\begin{equation}
\label{eq:iso2}
\pi_*(F(X, G)\idc) \otimes \QQ \,\, \cong \,\, H^*(X; \QQ)
\, \widetilde{\otimes} \, \left( \pi_*(G) \otimes \QQ \right)\, .
\end{equation}
Furthermore, $F(X, G)\idc$
is rationally $H$-equivalent to a product of Eilenberg-Mac\,Lane spaces with
the standard loop multiplication, with degrees and dimensions corresponding
to  (\ref{eq:iso2}).
\end{Thm}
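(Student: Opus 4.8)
The plan is to reduce the theorem to three inputs already in hand: the Hilton--Mislin--Roitberg rationalization theorem for function spaces (Proposition \ref{Pro:HMR}), the splitting of $G_\QQ$ into Eilenberg--Mac\,Lane factors (Corollary \ref{Cor:G properties}), and Thom's formula (\ref{eq:Thom}). The rational homotopy groups are computed by transporting the problem to the rationalized target, and the multiplicative refinement is then supplied by the homotopy commutativity established in Proposition \ref{Pro:Hnil} together with the characterization in Proposition \ref{Pro:H-abelian}.

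First I would invoke Proposition \ref{Pro:HMR} to see that $F(X,G)\idc \to F(X,G_\QQ)\idc$ is a rationalization map, so that $\pi_*(F(X,G)\idc)\otimes\QQ \cong \pi_*(F(X,G_\QQ)\idc)$. By Corollary \ref{Cor:G properties} there is an $H$-equivalence $G_\QQ \simeq \prod_{j\ge 1} K(\pi_j(G)\otimes\QQ,\, j)$. Since $X$ is a finite complex, applying $F(X,-)$ preserves this homotopy equivalence and converts the product of targets into a product of function spaces, whence $F(X,G_\QQ)\idc \simeq \prod_{j\ge 1} F\big(X,\, K(\pi_j(G)\otimes\QQ,\, j)\big)\idc$. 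Because $G$ has the homotopy type of a finite complex, $\pi_*(G)\otimes\QQ$ is nonzero in only finitely many degrees, and since $X$ is finite-dimensional each homotopy group below is finite-dimensional; hence the product contributes a direct sum on homotopy groups. Applying Thom's formula (\ref{eq:Thom}) to each factor gives, for $q\ge 1$,
$$
\pi_q\big(F(X, K(\pi_j(G)\otimes\QQ, j))\idc\big) \,\cong\, H^{j-q}(X;\QQ)\otimes(\pi_j(G)\otimes\QQ),
$$
and summing over $j$ produces the asserted identification (\ref{eq:iso2}) once grading conventions are matched.

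For the multiplicative statement I would not track the factor structure by hand. Instead, Proposition \ref{Pro:Hnil} already tells us that the rationalization of $F(X,G)\idc$ is a homotopy commutative $H$-space, so condition (b) of Proposition \ref{Pro:H-abelian} holds when that proposition is applied to $F(X,G)\idc$. Condition (a) then yields an $H$-equivalence from $(F(X,G)\idc)_\QQ$ to a product $\prod_{j} K(\pi_j(F(X,G)\idc)\otimes\QQ,\, j)$ carrying the standard multiplication, and the homotopy-group computation above pins down the degrees and dimensions of the factors. This produces the rational $H$-equivalence with the displayed indexing.

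The step requiring the most care is the grading bookkeeping identifying $\bigoplus_j H^{j-q}(X;\QQ)\otimes(\pi_j(G)\otimes\QQ)$ with $H^*(X;\QQ)\,\widetilde{\otimes}\,(\pi_*(G)\otimes\QQ)$. With $H^n(X;\QQ)$ placed in degree $-n$ and $\pi_j(G)\otimes\QQ$ in degree $j$, a tensor $x\otimes s$ with $x\in H^n(X;\QQ)$ and $s\in \pi_j(G)\otimes\QQ$ has total degree $j-n$ and should record a class in $\pi_{j-n}$; Thom's formula realizes exactly the terms with $n\le j$, which is precisely the non-negatively graded part selected by $\widetilde{\otimes}$. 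The one genuinely delicate point is total degree zero: the summands with $n=j$ are sent by Thom's formula to $\pi_0$ of the full mapping space, and these are discarded exactly when one restricts to the identity component $F(X,G)\idc$. Keeping this alignment straight---so that the top paired degree is absorbed into the set of path components rather than into the higher homotopy---is the one place where the argument must be carried out attentively rather than purely formally.
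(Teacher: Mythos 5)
Your proposal is correct and follows essentially the same route as the paper: Proposition \ref{Pro:HMR} to pass to $F(X,G_\QQ)\idc$, the splitting (\ref{eq:G_Q}) (via Corollary \ref{Cor:G properties}) together with Thom's formula (\ref{eq:Thom}) for the homotopy groups, and Propositions \ref{Pro:Hnil} and \ref{Pro:H-abelian} for the multiplicative identification. The paper's proof is just a terser version of this argument, and your extra care with the grading conventions and the degree-zero terms (which land in $\pi_0$ of the full mapping space rather than in the identity component) is a legitimate clarification of a point the paper leaves implicit.
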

\begin{proof}  Using Proposition \ref{Pro:HMR},  we have
$$\pi_*(F(X, G)\idc) \otimes \QQ \cong \pi_*(F(X, G_\QQ)\idc)$$
and we may compute the latter using the identification (\ref{eq:G_Q})
and Thom's formula (\ref{eq:Thom}).
The identification of the rational $H$-type of  $F(X, G)\idc$
 is a direct consequence of Proposition \ref{Pro:Hnil}.
\end{proof}

\begin{Rem} \label{dispense_finiteness} Since the identification
(\ref{eq:G_Q}) only requires a group-like $H$-space,
the homotopy group calculation holds when $G$ is group-like.

The assumption that $G$ is homotopy finite is used   only to
conclude that $G_\QQ$ is homotopy commutative via Proposition \ref{Pro:Hnil}.
Consequently, the second conclusion of
Theorem \ref{thm:mainB} holds when $G$ is a group-like $H$-space such that
$G_\QQ$ is homotopy commutative.
\end{Rem}

We next  prove  Theorem \ref{bigthm:mainC} for $X$ a finite CW complex.

\begin{Thm}[\bf Preliminary version of Theorem \ref{bigthm:mainC}\rm ] \label{thm:mainC}
Let $G$ be a topological group having  the homotopy type of a finite connected
CW complex.
Let $\zeta$ be a principal $G$-bundle over a finite CW complex $X$.  Then there is
a  rational $H$-equivalence
$$\GG(\zeta)\idc \,\, \simeq_\QQ \,\, F(X, G)\idc\, .$$
\end{Thm}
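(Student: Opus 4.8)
The plan is to reduce the gauge group to a loop space of a function space via Gottlieb's identity, rationalize that function space with Hilton--Mislin--Roitberg, and then exploit the fact that $BG$ is rationally a generalized Eilenberg--Mac\,Lane space in order to recognize the answer as $F(X,G_\QQ)\idc$.

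First I would invoke Gottlieb's identity (Corollary~\ref{cor:LG}): there is a multiplicative homotopy equivalence
$$\Om_{h_\zeta} F(X, BG) \,\,\simeq\,\, \GG(\zeta),$$
where $h_\zeta\:X \to BG$ classifies $\zeta$ and the left-hand side carries its loop multiplication. Passing to identity components, $\GG(\zeta)\idc$ is $H$-equivalent to the identity component of this loop space. Since $BG$ is simply connected and nilpotent, each component of $F(X,BG)$ is nilpotent (as in the proof of Proposition~\ref{Gamma-nilpotent}), so Proposition~\ref{Pro:HMR} shows that the basepoint-preserving map
$$F(X,BG)_{(h_\zeta)} \longrightarrow F(X, BG_\QQ)_{(\ell\circ h_\zeta)}$$
is a rationalization. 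Applying the loop functor, which preserves rationalizations of nilpotent spaces and carries $H$-maps to $H$-maps, exhibits $\Om_{h_\zeta}F(X,BG) \to \Om_{\ell\circ h_\zeta} F(X, BG_\QQ)$ as a rationalization and an $H$-map. Hence the rationalization of $\GG(\zeta)\idc$ is the identity component of $\Om_{\ell\circ h_\zeta} F(X, BG_\QQ)$.

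Next I would use that $BG_\QQ$ is a homotopy-commutative $H$-space, by Proposition~\ref{Pro:G properties}(c). Consequently $F(X, BG_\QQ)$ is a homotopy-commutative $H$-space, so all of its path components are mutually homotopy equivalent and its based loop space is, up to $H$-equivalence, independent of the chosen basepoint; in particular $\Om_{\ell\circ h_\zeta} F(X,BG_\QQ) \simeq \Om_c F(X, BG_\QQ)$, where $c$ denotes the constant map. The exponential adjunction then supplies an $H$-equivalence $\Om_c F(X, BG_\QQ) \cong F(X, \Om BG_\QQ) \simeq F(X, G_\QQ)$, under which the loop multiplication corresponds to the pointwise multiplication. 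Finally, $F(X, G_\QQ)\idc$ is the rationalization of $F(X,G)\idc$ by a second application of Proposition~\ref{Pro:HMR}. Chaining these equivalences on identity components yields the desired rational $H$-equivalence $\GG(\zeta)\idc \simeq_\QQ F(X,G)\idc$.

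The main obstacle will be ensuring that every step respects the $H$-structure, and especially the change of loop-space basepoint from $\ell\circ h_\zeta$ to the constant map $c$. If tracking the multiplication directly through that translation proves awkward, I would instead finish via Scheerer's theorem (Proposition~\ref{Pro:Scheerer}). The chain above already produces an isomorphism of rational homotopy groups as graded vector spaces, matching the calculation of Theorem~\ref{thm:mainB}. Moreover both sides are rationally homotopy commutative: $F(X,G)\idc$ by Proposition~\ref{Pro:Hnil}, and $\GG(\zeta)\idc$ because, under Gottlieb's identity, its rational Samelson Lie algebra is the shifted rational Whitehead Lie algebra of $F(X,BG)$, which vanishes since $F(X, BG_\QQ)$ is an $H$-space. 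As two connected group-like $H$-spaces with abelian Samelson Lie algebras sharing the same underlying graded vector space are rationally $H$-equivalent by Proposition~\ref{Pro:Scheerer}, this route delivers the conclusion without delicate bookkeeping of the loop multiplication.
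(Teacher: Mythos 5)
Your proposal is correct and follows essentially the same route as the paper's proof: Gottlieb's identity, the Hilton--Mislin--Roitberg rationalization of $F(X,BG)$, the fact that $(BG)_\QQ$ is (rationally) a group-like $H$-space so that all components of $F(X,(BG)_\QQ)$ are equivalent, and then passage to based loop spaces. Your fallback via Scheerer's theorem is also sound, and in fact mirrors how the paper itself concludes the general compact metric case of Theorem~\ref{bigthm:mainC}.
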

\begin{proof}
By     \cite[th.\ 1]{Got} (see also Corollary \ref{cor:LG} below),
 there is a homotopy equivalence of $H$-spaces
$$
\mathcal{G}(\zeta) \,\, \simeq \,\, \Omega_h F(X, BG)\, ,
$$
where the space on the right is the based loop space of the function
space $F(X,BG)$ with basepoint given by
the classifying map $h\:X\to BG$ for the bundle $\zeta$.
Recall that  $F(X,BG)_{(h)}$ denotes the path component of $F(X,BG)$ containing $h$.
By   Proposition \ref{Pro:HMR} above,
$$
F(X, BG)_{(h)} \to F(X, (BG)_\QQ)_{(h')}
$$
is a rationalization map, where
$h'$ is $\ell_{BG}\circ h$, where $\ell_{BG}\: BG \to (BG)_\QQ$ is the rationalization map.
In particular, the displayed map is a rational homotopy equivalence.

By Corollary \ref{Cor:G properties},  $(BG)_\QQ$
has the homotopy type of a loop space. It follows that
all the components of $F(X, (BG)_\QQ)$ have the same homotopy type.
Consequently, there is a rational homotopy equivalence
$$
F(X, BG)_{(h)}  \simeq_\QQ F(X,(BG)_\QQ)\idc \, .
$$
The result now follows by taking the based loop space of both sides.
\end{proof}

\begin{Rem} \label{dispenseC}
It is clear from our  proof that the finiteness assumption on $G$ was used only to
conclude that $(BG)_\QQ$ has the homotopy type of a loop space.
In fact, one sees that the above argument works, without the finiteness assumption
on $G$, at the expense of assuming that $(BG)_\QQ$ has the structure of
 a group-like $H$-space.

 While the   identity
$$
\mathcal{G}(\zeta) \simeq \Omega_h F(X, BG)
$$
does extend to more general spaces $X$ (see Corollary \ref{cor:LG})), the
method of proof above is limited by  Proposition \ref{Pro:HMR}. Both the
nilpotence result \cite[th.\ 2.5]{HMR}
and the localization result Proposition \ref{Pro:HMR}
for $F(X, Y)$ require $X$ to be a finite CW complex.  In Section \ref{sec:HMR},  we extend the localization result to $X$ 
compact metric assuming nilpotence.
However, the nilpotence of the components of
$F(X, Y)$ is not expected to hold for general $X$.
\end{Rem}

The last goal of this section is to prove  Theorem \ref{bigthm:mainD} when  $X$ is
a finite complex.

\begin{Thm}[\bf Preliminary version of Theorem \ref{bigthm:mainD}\rm ]
\label{thm:mainD}
Let $G$ be a compact connected Lie group, and
let $\zeta\: T \to X$ be a principal $PG$-bundle over a finite CW complex $X$.  Then there is
a  rational homotopy equivalence of $H$-spaces
$$\mathcal P(\zeta)\idc \,\, \simeq_\QQ \,\, F(X, G)\idc.$$
\end{Thm}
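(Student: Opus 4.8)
The plan is to reduce this to the case already handled in Theorem \ref{thm:mainC} by replacing $G$ with the auxiliary group $G_0$ of Proposition \ref{Pro:G_0}, whose virtue is that it splits as a direct product. By Proposition \ref{Pro:G_0} there is a compact connected Lie group $G_0$, a homomorphism $q\colon G_0 \to G$ that is a rational homotopy equivalence, a splitting $G_0 \cong P(G_0) \times Z(G_0)$, and moreover $q$ induces an isomorphism $P(G_0) \cong PG$ and carries $Z(G_0)$ into $Z(G)$. Using $PG \cong P(G_0)$ I regard $\zeta$ as a principal $P(G_0)$-bundle. The homomorphism $q$ induces a $PG$-equivariant map $q^{\ad}\colon G_0^{\ad} \to G^{\ad}$ respecting the group multiplications, hence a map of fibrewise groups $\Pad_0(\zeta) \to \Pad(\zeta)$ over $X$, where $\Pad_0(\zeta) = T \times_{P(G_0)} G_0^{\ad}$. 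First I would argue that, since $q^{\ad}$ is a rational homotopy equivalence on fibres and $X$ is a finite complex, the induced homomorphism on section groups restricts to a rational $H$-equivalence $\mathcal P_0(\zeta)\idc \simeq_\QQ \mathcal P(\zeta)\idc$, where $\mathcal P_0(\zeta) = \Gamma(\Pad_0(\zeta))$.

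The next step exploits the splitting. Because $Z(G_0)$ is central, conjugation in $G_0 = P(G_0) \times Z(G_0)$ acts only on the first factor, so as a $P(G_0)$-space $G_0^{\ad} \cong P(G_0)^{\ad} \times Z(G_0)$ with trivial action on the second factor (here $P(G_0)$ has trivial center, so it equals its own projectivization). Consequently the associated bundle splits as a fibrewise product
$$
\Pad_0(\zeta) \;\cong\; \Ad(\zeta') \times_X \big(X \times Z(G_0)\big),
$$
where $\zeta'$ denotes $\zeta$ regarded as a principal $P(G_0)$-bundle and $\Ad(\zeta')$ is its adjoint bundle. Taking sections gives an isomorphism of topological groups $\mathcal P_0(\zeta) \cong \GG(\zeta') \times F(X, Z(G_0))$. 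Now $P(G_0)$ is a compact connected Lie group, so Theorem \ref{thm:mainC} yields a rational $H$-equivalence $\GG(\zeta')\idc \simeq_\QQ F(X, P(G_0))\idc$. Combining this with the evident isomorphism $F(X, P(G_0)) \times F(X, Z(G_0)) \cong F(X, P(G_0) \times Z(G_0)) = F(X, G_0)$ and passing to identity components gives $\mathcal P_0(\zeta)\idc \simeq_\QQ F(X, G_0)\idc$. Finally, since $q$ is a rational homotopy equivalence of groups, the induced map $F(X, G_0)\idc \to F(X, G)\idc$ is a rational $H$-equivalence (via Proposition \ref{Pro:HMR} and the fact that $(G_0)_\QQ \simeq G_\QQ$ as $H$-spaces). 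Concatenating the chain of rational $H$-equivalences produces $\mathcal P(\zeta)\idc \simeq_\QQ F(X, G)\idc$.

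The main obstacle is the first step: showing that a fibrewise rational homotopy equivalence over the finite complex $X$ induces a rational homotopy equivalence on identity components of section spaces. I would establish this by skeletal induction on $X$, comparing the section fibrations supplied by Proposition \ref{Pro:pullback section sequence} for the two bundles and using that the components in question are nilpotent (Proposition \ref{Gamma-nilpotent}) together with the fact that localization preserves fibrations of nilpotent spaces; over each cell the comparison reduces to the fibrewise statement for $q^{\ad}\colon G_0^{\ad} \to G^{\ad}$, which is a rational equivalence by construction. A secondary point to track throughout is that every comparison map is either a homomorphism of topological groups or is supplied as an $H$-map by Theorem \ref{thm:mainC}, so that the final composite is genuinely a rational equivalence of $H$-spaces and not merely of underlying spaces.
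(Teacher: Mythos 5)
Your proposal is correct and follows essentially the same route as the paper: both reduce to the split group $G_0$ of Proposition \ref{Pro:G_0}, compare the two projective adjoint bundles via the fibrewise group map induced by $q$ (with the section-space comparison shown to be a rational $H$-equivalence by induction over the cells of $X$), and dispose of the split case using Theorem \ref{thm:mainC}. The only cosmetic differences are that the paper phrases the comparison as a map of lifting problems over $BPG$, where nilpotence of the total spaces is supplied by Proposition \ref{U_0->U}, and it handles the split case by identifying $T\times_{PG}G^{\ad}$ with the adjoint bundle of the principal $G$-bundle $T\times Z(G)$ rather than splitting off the factor $F(X,Z(G_0))$ as you do.
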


\begin{proof} Case 1. Suppose first that $G$ splits as $Z(G) \times P(G)$. Then one has
an isomorphism of bundles over $X$ with total spaces
$$
(T \times Z(G)) \times_G G^\ad \,\, \cong  \,\, T \times_{PG} G^\ad \, .
$$
The result follows now by Theorem \ref{thm:mainC} applied to the bundle on the left.
\smallskip

{\flushleft Case 2.} This is the general case. By Proposition \ref{Pro:G_0}, there is a
compact Lie group $G_0$ and a homomorphism $q\:G_0 \to G$ which is a rational homotopy equivalence. Furthermore, this 
homomorphism induces an isomorphism $P(G_0) \cong P(G)$ and one
also has a splitting $G_0 \cong Z(G_0) \times P(G_0)$.

By Proposition \ref{U_0->U} below, the evident map
$$
Q_0 :=  EPG \times_{PG} G_0^{\ad} \to EPG \times_{PG} G^{\ad} := Q
$$
of fibrewise groups is a rational homotopy
equivalence of  nilpotent spaces.
Let $h \: X\to BPG$ classify the bundle with total space
$T\times_{PG} G^\ad$; then $h$ also classifies the bundle
with total space $T\times_{PG} G_0^\ad$.

Denote the lifting problem
$$
\xymatrix{
& Q \ar[d]\\
X \ar@{..>}[ur] \ar[r]_h & BPG
}
$$
by $\mathcal D$. Then the space of lifts $\Gamma(\mathcal{D})$
is the projective gauge group $\mathcal P(\zeta)$.
Denote the corresponding lifting problem with $Q$ replaced by $Q_0$ by $\mathcal D_0$.
Then the map $$\Gamma(\mathcal D_0) \to \Gamma(\mathcal D)$$
induced by the homomorphism $q\: G_0 \to G$ is both a rational homotopy equivalence
on components and
a map of $H$-spaces
(this is by a straightforward induction using the cell structure for $X$).
Similarly, $q$ induces a rational homotopy equivalence of $H$-spaces
$F(X,G_0) \to F(X,G)$.

By Case 1, we also have a rational equivalence
of $H$-spaces
$$
\Gamma(\mathcal D_0)\idc \,\, \simeq \,\, F(X,G_0)\idc \, .
$$
Assembling these three equivalences completes the proof.
\end{proof}

The remainder of this section is devoted to proving Proposition \ref{U_0->U} used in the proof above. We need some 
preliminary lemmas.

\begin{Lem} Assume $G$ is a compact connected Lie group.
Let $E = EG \times_G G^\ad$ and $Q = EPG\times_{PG} G^\ad$.
Then the map
$$
E \to Q
$$
induces a surjection on homotopy groups in each degree.
\end{Lem}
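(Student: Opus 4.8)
The plan is to recognize the map $E \to Q$ as a \emph{pullback} and then transport surjectivity from the base. The quotient homomorphism $q \: G \to PG$ induces $EG \to EPG$ and hence the map $E \to Q$; moreover, extension of structure group along $q$ yields a canonical identification
$$
E \;=\; EG\times_G G^{\ad} \;\cong\; \bigl(EG\times_G PG\bigr)\times_{PG} G^{\ad} \;=\; f^{*}Q ,
$$
where $f = Bq \: BG \to BPG$. Thus $E \to Q$ is the canonical map from the pulled-back fibration $f^{*}Q \to BG$ to $Q \to BPG$; in particular it is a map of fibrations over $f$ carrying the fibre $G^{\ad}$ to the fibre $G^{\ad}$ by the identity. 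Recording this at the outset is what makes the subsequent homotopy-group comparison mechanical.

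Comparing the two long exact homotopy sequences gives a commutative ladder
$$
\begin{CD}
\pi_n(G^{\ad}) @>>> \pi_n(E) @>>> \pi_n(BG) @>>> \pi_{n-1}(G^{\ad}) \\
@| @VVV @VV{f_*}V @| \\
\pi_n(G^{\ad}) @>>> \pi_n(Q) @>>> \pi_n(BPG) @>>> \pi_{n-1}(G^{\ad})
\end{CD}
$$
in which the outer verticals are identities. Since the maps on fibre homotopy groups are isomorphisms, the surjectivity form of the four lemma reduces surjectivity of $\pi_n(E) \to \pi_n(Q)$ to surjectivity of $f_* \: \pi_n(BG) \to \pi_n(BPG)$, that is, to surjectivity of $q_* \: \pi_{n-1}(G) \to \pi_{n-1}(PG)$.

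To handle $q_*$ I would use the principal fibration $Z(G) \to G \to PG$. As $G$ is compact and connected, $Z(G)$ is a compact abelian Lie group, so it has the homotopy type of a torus times a finite group and therefore $\pi_k(Z(G)) = 0$ for all $k \geq 2$. The associated long exact sequence then makes $q_*$ an isomorphism in all degrees $\geq 2$, so $f_*$ is onto for every $n \geq 3$; the degrees $n = 0,1$ are immediate from the connectedness of $G$ and $PG$. In these ranges the higher vanishing of $\pi_*(Z(G))$ does all the work and the argument is routine.

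The main obstacle is the bottom degree $n = 2$, which demands surjectivity of $q_* \: \pi_1(G) \to \pi_1(PG)$. The same fibration identifies the cokernel of this map with $\pi_0(Z(G))$, using $\pi_0(G) = 0$, and the adjacent group $\pi_2(PG)$ is finite by Proposition \ref{Pro:G properties}(b). This low-degree comparison is where the content sits: one must control the fundamental group against the component group of the centre, and for the compact groups at hand I would pin it down through the structure results of Proposition \ref{Pro:G_0} and Lemma \ref{Pro:PG properties}. It is worth noting that the obstruction here is the finite group $\pi_0(Z(G))$, so it disappears after tensoring with $\QQ$ (consistent with Lemma \ref{Pro:PG properties}(b)); hence the comparison is in any case an isomorphism on rational homotopy groups, which is what the intended applications require. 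This step, rather than the higher-degree vanishing, is the one I expect to require the most care.
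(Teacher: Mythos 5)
Your pullback identification $E \cong (Bq)^*Q$ and the four-lemma reduction are correct, and the degrees $n \neq 2$ do go through (with one small repair noted below). But the step you deferred is not a matter of "care": it is a genuine gap, and it cannot be closed in the stated generality. As you computed, the cokernel of $q_*\colon \pi_1(G) \to \pi_1(PG)$ is $\pi_0(Z(G))$, and this is nonzero whenever the center of $G$ is disconnected; no appeal to Proposition \ref{Pro:G_0} or Lemma \ref{Pro:PG properties} can change that. Moreover, the obstruction is not an artifact of your reduction. Both $E \to BG$ and $Q \to BPG$ admit sections (the unit sections of these fibrewise groups), and the map $E \to Q$ respects them; hence for $n \geq 2$ there are compatible splittings $\pi_n(E) \cong \pi_n(G) \oplus \pi_n(BG)$ and $\pi_n(Q) \cong \pi_n(G) \oplus \pi_n(BPG)$ under which $\pi_n(E) \to \pi_n(Q)$ becomes $\mathrm{id} \oplus (Bq)_*$. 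So surjectivity of $\pi_n(E) \to \pi_n(Q)$ is not merely implied by, but equivalent to, surjectivity of $(Bq)_*$ on $\pi_n$. Concretely, for $G = SU(2)$ (so $Z(G) = \{\pm I\}$ and $PG = SO(3)$) one gets $\pi_2(E) \cong \pi_2(SU(2)) \oplus \pi_1(SU(2)) = 0$ while $\pi_2(Q) \cong \pi_2(SU(2)) \oplus \pi_1(SO(3)) \cong \mathbb{Z}/2$: the Lemma's conclusion itself fails in degree $2$ for this $G$. Your rational fallback is true but proves a weaker statement than the one asserted.

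This is exactly the point where the paper's own proof quietly inserts an extra hypothesis: it works with the homotopy fibre sequence $BZ(G) \to E \to Q$ (the same information as your ladder, since $E = (Bq)^*Q$) and then invokes, parenthetically, "the fact that $Z(G)$ is a torus." That assumption is precisely the vanishing of your obstruction $\pi_0(Z(G))$; it holds for $G = U(n)$, which is the case feeding Theorem \ref{bigthm:mainA}, but fails for instance for $SU(n)$. Read with the hypothesis that $Z(G)$ is connected, your argument closes up completely and is, if anything, cleaner than the paper's, which still has to treat degree $3$ by hand via the computation $\pi_3(E) \cong \pi_3(G) \cong \pi_3(Q)$. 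The one repair you still owe: the long exact sequence of $Z(G) \to G \to PG$ by itself gives only injectivity of $q_*$ on $\pi_2$, with cokernel $\ker\bigl(\pi_1(Z(G)) \to \pi_1(G)\bigr)$, so your claim of an isomorphism in degree $2$ needs more. Either quote the classical fact that $\pi_2$ of a compact Lie group vanishes (applied to $PG$), or note that this cokernel is finite, being a quotient of the group $\pi_2(PG)$ which is finite by Proposition \ref{Pro:G properties}(b), and embeds in the torsion-free group $\pi_1(Z(G))$, hence is zero.
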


\begin{proof}
 There is a homotopy fibre sequence
 $$
 BZ(G) \to E \to Q \, .
$$
Taking the long exact homotopy sequence, we
infer that $\pi_*(E) \to \pi_*(Q)$ is an isomorphism
when $\ast \ne 3$ (here we are using the fact  $Z(G)$ is a torus).
Consequently, we have an exact sequence
$$
0 \to \pi_3(E) \to  \pi_3(Q) \to \mathbb Z^\ell \to \pi_2(E)\to \pi_2(Q) \to 0
$$
where $\ell =$ rank of $Z(G)$.

We can calculate $\pi_3(E)$ using the long exact sequence of the
 fibration $E \to BG$; one sees
(using the fact that $\pi_2(G) = 0$) that it is isomorphic to $\pi_3(G)$.
Likewise, we see that $\pi_3(Q)$ is also isomorphic to $\pi_3(G)$ and the homomorphism
$\pi_3(E) \to \pi_3(Q)$ is in fact an isomorphism.
\end{proof}

\begin{Lem} If $E \to B$ is a fibration of connected spaces having the
homotopy type of a CW complex. Assume $\pi_*(E) \to \pi_*(B)$
is surjective in every degree and $E$ is nilpotent. Then $B$ is
nilpotent.
\end{Lem}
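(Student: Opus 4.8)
The plan is to verify directly the two conditions in the definition of nilpotence for $B$, transporting nilpotence across the surjection $p_* \colon \pi_*(E) \to \pi_*(B)$. In fact the fibration hypothesis plays no role here beyond the naturality of the $\pi_1$-action; the only inputs I would use are the surjectivity of $p_*$ and the nilpotence of $E$.

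First I would dispose of the fundamental group. The map $p_* \colon \pi_1(E) \to \pi_1(B)$ is surjective by hypothesis, and $\pi_1(E)$ is nilpotent because $E$ is. Since a quotient of a nilpotent group is nilpotent, $\pi_1(B)$ is nilpotent. Next, fix $n \geq 2$ and examine the action of $\pi_1(B)$ on $\pi_n(B)$. By naturality of the action, $p_* \colon \pi_n(E) \to \pi_n(B)$ is equivariant over the homomorphism $p_* \colon \pi_1(E) \to \pi_1(B)$, i.e.\ $p_*(\alpha \cdot x) = p_*(\alpha)\cdot p_*(x)$. Pulling the $\pi_1(B)$-action back along the surjection $\pi_1(E) \to \pi_1(B)$ turns $\pi_n(B)$ into a $\pi_1(E)$-module and makes $p_*$ a surjection of $\pi_1(E)$-modules.

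Let $I$ denote the augmentation ideal of $\ZZ[\pi_1(E)]$. Nilpotence of $E$ gives $I^k \cdot \pi_n(E) = 0$ for some $k$, and since $p_*$ is a surjective map of $\pi_1(E)$-modules, $I^k \cdot \pi_n(B) = I^k \cdot p_*(\pi_n(E)) = p_*(I^k \cdot \pi_n(E)) = 0$; thus $\pi_n(B)$ is a nilpotent $\pi_1(E)$-module. I would then transfer this to the genuine $\pi_1(B)$-action: the surjection of groups induces a surjection of group rings $\ZZ[\pi_1(E)] \to \ZZ[\pi_1(B)]$ carrying $I$ \emph{onto} the augmentation ideal $J$ of $\ZZ[\pi_1(B)]$, and the $\pi_1(E)$- and $\pi_1(B)$-actions on $\pi_n(B)$ agree under this map, so $J \cdot \pi_n(B) = I \cdot \pi_n(B)$ as subgroups. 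Hence $J^k \cdot \pi_n(B) = I^k \cdot \pi_n(B) = 0$, and $\pi_n(B)$ is a nilpotent $\pi_1(B)$-module. Together with the nilpotence of $\pi_1(B)$, this shows $B$ is nilpotent.

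The one point I would be most careful about is precisely this interchangeability of \emph{$\pi_1(E)$-nilpotent} and \emph{$\pi_1(B)$-nilpotent} for the module $\pi_n(B)$. It rests entirely on the surjectivity of $p_*$ on $\pi_1$, which is what guarantees that the image of $I$ is exactly $J$ and hence that $I\cdot\pi_n(B)$ and $J\cdot\pi_n(B)$ coincide; everything else is the routine fact that nilpotence of a module over a group is inherited by quotient modules. If $p_*$ on $\pi_1$ were merely injective or had smaller image, this transfer would fail, so I would flag the surjectivity hypothesis as doing the essential work.
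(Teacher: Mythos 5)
Your proof is correct, and it follows the same overall skeleton as the paper's: $\pi_1(B)$ is nilpotent as a quotient of the nilpotent group $\pi_1(E)$, and for $n \geq 2$ one first shows $\pi_n(B)$ is nilpotent as a $\pi_1(E)$-module and then transfers this to the $\pi_1(B)$-action using surjectivity of $\pi_1(E) \to \pi_1(B)$ --- the paper's closing step is exactly this transfer, phrased just as you phrase it. The genuine difference lies in how the $\pi_1(E)$-nilpotence of $\pi_n(B)$ is obtained. The paper uses the fibration structure: surjectivity of $p_*$ in all degrees kills the connecting homomorphisms, yielding short exact sequences of $\pi_1(E)$-modules
$$
0 \to \pi_n(F) \to \pi_n(E) \to \pi_n(B) \to 0
$$
(where $F$ is the fibre), and then cites \cite[prop.\ 4.3]{HMR} for the fact that a quotient of a nilpotent module is nilpotent. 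You instead observe that $p_* \colon \pi_n(E) \to \pi_n(B)$ is itself a surjection of $\pi_1(E)$-modules (by naturality of the $\pi_1$-action, valid for any map) and run the augmentation-ideal computation by hand, so the fibre never enters. Your route is therefore marginally more general --- as you correctly flag, it proves the statement for any map inducing surjections on all homotopy groups, not just a Hurewicz fibration --- and it is self-contained, at the cost of reproving the elementary module lemma that the paper imports from HMR. In effect, the paper's appeal to the fibration is only a device for exhibiting $\pi_n(B)$ as a quotient $\pi_1(E)$-module of $\pi_n(E)$, and your argument shows that device is dispensable.
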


\begin{proof} The quotient of a nilpotent group is again nilpotent, so
$\pi_1(B)$ is nilpotent. Furthermore, when $k \ge 2$,
we have a short exact sequence of $\pi_1(E)$ modules
$$
0 \to \pi_k(F) \to \pi_k(E) \to \pi_k(B) \to 0
$$
where $F$ denotes the fibre at the basepoint.
The nilpotency of the middle module guarantees that $\pi_k(B)$ is
also a nilpotent $\pi_1(E)$-module
 (see \cite[prop.\ 4.3]{HMR}). This module structure
arises from the homomorphism $\pi_1(E) \to \pi_1(B)$ by restriction.
Since this homomorphism is surjective, it follows that $\pi_k(B)$ is
a nilpotent $\pi_1(B)$-module.
\end{proof}

\begin{Lem} \label{Unilpotent} Let $G$ be a compact connected Lie group. Then the space
$Q = EP(G) \times_{P(G)} G^\ad$  is nilpotent.
\end{Lem}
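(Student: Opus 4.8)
The plan is to read off the nilpotence of $Q$ from the two preceding lemmas, the only genuinely new input being the nilpotence of $E = EG\times_G G^\ad$. Recall from the first of these lemmas that the natural map $E\to Q$ sits in a homotopy fibre sequence $BZ(G)\to E\to Q$ and is surjective on homotopy groups in every degree. Replacing $E\to Q$ by an honest fibration (which alters nothing up to homotopy, hence nothing as regards nilpotence), the second preceding lemma applies once we know that $E$ is nilpotent: its hypotheses are precisely that $E\to Q$ be a fibration of connected spaces of CW type, surjective on homotopy, with nilpotent total space. Both $E$ and $Q$ are connected Borel constructions built from CW data, so the connectivity and CW-type requirements are automatic.

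So first I would establish that $E$ is nilpotent. The Borel construction $E=EG\times_G G^\ad$ is the total space of the adjoint bundle
\[
G^\ad \longrightarrow E \longrightarrow BG
\]
with fibre $G$. Since $G$ is connected, $BG$ is simply connected; and since $G$ is a topological group, its fundamental group $\pi_1(G)$ is abelian and acts trivially on every $\pi_n(G)$ (an $H$-space is a simple space). Because $BG$ is simply connected, the fibre inclusion induces a surjection $\pi_1(G)\to\pi_1(E)$; pulling the $\pi_1(E)$-action on $\pi_n(G)$ back along this surjection recovers the intrinsic $\pi_1(G)$-action, which is trivial, so $\pi_1(E)$ acts trivially on the homotopy of the fibre. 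Thus the displayed fibration is nilpotent, $\pi_1(E)$ is a quotient of the abelian group $\pi_1(G)$ and hence nilpotent, and with both base and fibre nilpotent the standard theory of nilpotent fibrations (cf.\ \cite{HMR}) yields that $E$ is nilpotent.

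With $E$ nilpotent in hand, I would apply the second preceding lemma to the fibration $E\to Q$, using the surjectivity on homotopy groups supplied by the first preceding lemma, to conclude that $Q$ is nilpotent, as desired.

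The hard part is the middle step, the nilpotence of $E$: everything turns on verifying that the adjoint Borel fibration $G^\ad\to E\to BG$ is a \emph{nilpotent} fibration, which is where the usual $\pi_1$-action bookkeeping would normally be delicate. Here, however, the two special features of the situation — $BG$ simply connected (because $G$ is connected) and $\pi_1(G)$ acting trivially on $\pi_*(G)$ (because $G$ is a group) — conspire to make the $\pi_1(E)$-action on the fibre trivial, so the verification becomes essentially automatic.
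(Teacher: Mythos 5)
Your proof is correct, and it reaches the statement by a genuinely different route at the one place where the paper does real work: the nilpotence of $E = EG\times_G G^\ad$. The paper invokes its Lemma \ref{lem:LG} to identify $E$, as a fibrewise object over $BG$, with the free loop space $L(BG) = F(S^1,BG)$, and then quotes the Hilton--Mislin--Roitberg theorem that every component of $F(X,Y)$ is nilpotent when $X$ is a finite complex and $Y$ is nilpotent (here $X = S^1$ and $Y = BG$, which is simply connected). You instead argue directly on the fibration $G^\ad \to E \to BG$: since $BG$ is simply connected, $\pi_1(G)\to\pi_1(E)$ is onto, and since the $\pi_1(E)$-action on $\pi_n(G^\ad)$ restricts along this surjection to the intrinsic $\pi_1(G)$-action, which is trivial because $G$ is an $H$-space, the fibration is nilpotent (indeed the action is trivial); standard nilpotent-fibration theory, with nilpotent base $BG$ and nilpotent fibre $G$, then gives that $E$ is nilpotent. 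Both arguments finish identically, via the two preceding lemmas (surjectivity of $\pi_*(E)\to\pi_*(Q)$, and the lemma that a fibration with nilpotent total space and $\pi_*$-surjective projection has nilpotent base); your explicit replacement of $E\to Q$ by an honest fibration addresses a point the paper leaves implicit. As for what each approach buys: the paper's proof is shorter given machinery it has already built and uses elsewhere (Lemma \ref{lem:LG} and the function-space nilpotence theorem of \cite{HMR}), whereas yours is more self-contained, requiring only the basic theory of nilpotent fibrations, and it yields slightly more, namely that $\pi_1(E)$ acts trivially on the homotopy of the fibre, so the adjoint fibration is in fact simple rather than merely nilpotent.
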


\begin{proof} There is a homotopy fibre sequence
$$
BZ(G) \to E \to Q\, ,
$$
where $E = EG\times_G G^\ad$. Then $E$ is homotopy equivalent to $LBG$, the free loop
space of $BG$ (cf.\ Lemma \ref{lem:LG}). We infer that $E$ is nilpotent by \cite[th.\ 2.5]{HMR}.
Now apply the preceding lemmas.
\end{proof}

\begin{Pro} \label{U_0->U} Let $G$ be a compact connected Lie group and
let $q\:G_0 \to G$  be as in Proposition \ref{Pro:G_0}. Then the map
of fibrewise groups
$$
Q_0 := EP(G) \times_{P(G)} G_0^\ad \to  EP(G) \times_{P(G)} G^\ad =: Q
$$
is a rational homotopy equivalence of nilpotent spaces.
\end{Pro}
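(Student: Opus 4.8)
The plan is to realize both $Q_0$ and $Q$ as fibre bundles over the common base $BPG$ and to compare them through their rational homology Serre spectral sequences. First I would record that both total spaces are nilpotent. The space $Q$ is nilpotent by Lemma \ref{Unilpotent}, and since the isomorphism $P(G_0) \cong PG$ of Proposition \ref{Pro:G_0}(c) identifies $Q_0$ with $EP(G_0) \times_{P(G_0)} G_0^{\ad}$, the same Lemma \ref{Unilpotent} applied to the compact connected Lie group $G_0$ shows that $Q_0$ is nilpotent as well. Because a rational homology isomorphism between nilpotent spaces is automatically a rational homotopy equivalence, it then suffices to prove that the map $Q_0 \to Q$ induces an isomorphism on rational homology.

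Next I would exploit the Borel construction projections $Q \to BPG$ and $Q_0 \to BPG$, which are fibre bundles with fibres $G^{\ad} = G$ and $G_0^{\ad} = G_0$ respectively. By hypothesis $Q_0 \to Q$ is a map of fibrewise groups, hence a map of bundles covering the identity of $BPG$ and restricting on the fibre over the basepoint to the homomorphism $q\: G_0 \to G$. The key structural point is that $PG$ is connected by Proposition \ref{Pro:PG properties}(a), so $BPG$ is simply connected; consequently both bundles carry untwisted rational local coefficients, and their homology Serre spectral sequences take the split form $E^2_{p,q} = H_p(BPG; \QQ) \otimes H_q(-; \QQ)$.

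The bundle map then induces a map of these spectral sequences which on $E^2$ is $\mathrm{id} \otimes q_*$. By Proposition \ref{Pro:G_0}(a) the homomorphism $q$ is a rational homotopy equivalence, so $q_* \: H_*(G_0; \QQ) \to H_*(G; \QQ)$ is an isomorphism, and therefore the map of $E^2$-pages is an isomorphism in every bidegree. The mapping (comparison) theorem for first-quadrant homology spectral sequences now yields an isomorphism on $E^\infty$, and, since the map respects the filtrations of the abutments, an isomorphism $H_*(Q_0; \QQ) \overset{\sim}{\to} H_*(Q; \QQ)$. Combined with the nilpotency established in the first paragraph, this gives the asserted rational homotopy equivalence.

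The main point requiring care—essentially the only nonformal step—is guaranteeing that the comparison is legitimate, i.e.\ that the two fibrations sit over the same base with untwisted coefficients so that each $E^2$-term splits as a tensor product and the fibre map alone controls the comparison. This is exactly what the connectivity of $PG$ provides. As an alternative to the spectral sequence argument, one could instead apply the five lemma to the $\QQ$-tensored homotopy long exact sequences of the two bundles, using again that $BPG$ is simply connected (so the relevant low-degree homotopy groups are abelian) and that $q$ induces an isomorphism on $\pi_* \otimes \QQ$; I expect either route to work, with the homology comparison being the cleaner of the two.
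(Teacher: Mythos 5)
Your proof is correct, but it takes a genuinely different route from the paper's. The paper works with the same map of fibre sequences $G_0^{\ad} \to Q_0 \to BP(G)$ and $G^{\ad} \to Q \to BP(G)$ and the same nilpotency input (Lemma \ref{Unilpotent}), but instead of comparing rational homology it rationalizes the whole diagram, invoking the Hilton--Mislin--Roitberg theorem that rationalization preserves fibrations of nilpotent spaces \cite[th.\ 3.12]{HMR}: since the base map is the identity and the fibre map $(G_0^{\ad})_\QQ \to (G^{\ad})_\QQ$ is an equivalence by Proposition \ref{Pro:G_0}(a), the middle map $(Q_0)_\QQ \to Q_\QQ$ is an equivalence. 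You instead stay at the level of rational homology, using the Serre spectral sequence comparison over the simply connected base $BP(G)$ together with the homology form of the Whitehead theorem for nilpotent spaces (a rational homology isomorphism of nilpotent spaces is a rational homotopy equivalence). Both arguments are valid: your route trades the HMR fibration-preservation theorem for a classical Zeeman-type comparison plus the homology criterion, and is correspondingly more elementary and self-contained, while the paper's is shorter given the HMR machinery. Your suggested alternative (tensoring the homotopy long exact sequences with $\QQ$ and applying the five lemma) is essentially the paper's argument with the order of rationalization and exact sequence reversed; it does go through here because $\pi_1(Q_0)$ and $\pi_1(Q)$ are abelian, being quotients of $\pi_1(G_0)$ and $\pi_1(G)$. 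One small point in your favor: you justify the nilpotency of $Q_0$ explicitly by transporting Lemma \ref{Unilpotent} across the isomorphism $P(G_0)\cong PG$ of Proposition \ref{Pro:G_0}(c), an identification the paper leaves implicit when it cites that lemma for both $Q$ and $Q_0$.
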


\begin{proof} Both $Q$ and $Q_0$ are nilpotent by Lemma \ref{Unilpotent}.
By applying rationalization to the diagram
$$
\xymatrix{
G_0^\ad \ar[r] \ar[d]_q & Q_0 \ar[r]\ar[d] & BP(G)\ar@{=}[d] \\
G^\ad \ar[r] & Q \ar[r] & BP(G)
}
$$
whose rows are fibre sequences,
and using the fact that rationalization  preserves fibrations
(\cite[th.\ 3.12]{HMR}) we infer that the map $Q_0 \to Q$ is a rational
homotopy equivalence.
\end{proof}


\section{Limits and function spaces}\label{sec:functions}

  When $X$ is a compact metric space, a classical result of Eilenberg and Steenrod
  \cite[th.\ X.10.1]{ES} gives an inverse system
of finite simplicial (CW) complexes  $X_j$  and compatible maps $h_{j} \: X  \to X_j$
  such that the induced map
  $$h\: X    \to    \Invlim_{j}X_j $$
  is a homeomorphism.  This  result and its generalization
  are at the core of our method for passing  from finite complexes
  to compact metric spaces.

In this and subsequent sections, we consider  both direct and inverse limits.
Suppose
$\{ X_j, p_{ij} \}$ is an inverse system of spaces,
where $p_{ij} \: X_i \to X_j$ are maps, $j \leq i$.
Given compatible maps $h_j \: X \to X_j$, one has an induced map $h = \Invlim_j h_j \: X \to \Invlim_{j} X_j.$

 We record the following basic result.

\begin{Pro}[\bf Eilenberg-Steenrod {\cite[th.\ X.10.1, X.11.9]{ES}}\rm ]
 \label{Pro:esinvlim}  Let $X$ be a compact Hausdorff space.
\begin{enumerate} \item[(a)]
There exists an inverse system of finite CW complexes $\{X_j, p_{ij} \}  $
 and compatible maps $h_j \: X  \to X_j$ inducing a homeomorphism
$$
h = \Invlim_{j} h_j \: X \to \Invlim_{j}\,  X_j \, .
$$
\item[(b)]
Given a map $f\:X\to Y$ in which $Y$ is a CW complex,
there exists an index \,$m$
and a cellular map
$f_m\: X_m\to Y$
 such that the composite
$$
\begin{CD}
X @> h _m >> X_m @> f_m >>
Y
\end{CD}
$$
is homotopic to $f$.
\end{enumerate}
\end{Pro}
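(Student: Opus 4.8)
The plan is to realize $X$ as the inverse limit of the nerves of its finite open covers, and then to deduce the factorization property (b) from the absolute neighborhood retract property of finite CW complexes together with the density of functions pulled back from finite stages.

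For (a): since $X$ is compact Hausdorff it is normal, so every finite open cover $\mathcal U = \{U_1,\dots,U_k\}$ admits a subordinate partition of unity $\{\phi_i\}$. Let $N(\mathcal U)$ be the nerve of $\mathcal U$ --- the finite simplicial complex whose vertices are the $U_i$ and whose simplices are the subcollections with nonempty common intersection --- and define the canonical map $\kappa_{\mathcal U}\colon X \to |N(\mathcal U)|$ by $\kappa_{\mathcal U}(x) = \sum_i \phi_i(x)\,v_i$. The finite open covers, directed by refinement, together with the simplicial projections $N(\mathcal U') \to N(\mathcal U)$ attached to a refinement $\mathcal U' \prec \mathcal U$, form an inverse system of finite complexes into which the $\kappa_{\mathcal U}$ assemble to give a map $X \to \Invlim |N(\mathcal U)|$. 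The crux is that this map is a homeomorphism: it is injective because distinct points of $X$ lie in disjoint members of some finite open cover, it is surjective by the finite intersection property of $X$, and a continuous bijection from a compact space to the Hausdorff limit is automatically a homeomorphism. When $X$ is compact \emph{metric} I may moreover replace this net by a countable inverse \emph{sequence} $\{X_j,p_{ij}\}$, taking $X_j = |N(\mathcal U_j)|$ for a finite subcover $\mathcal U_j$ of the cover of $X$ by balls of radius $1/j$; these are cofinal because every open cover of a compact metric space has a positive Lebesgue number.

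For (b): as $X$ is compact, $f(X)$ lies in a finite subcomplex $Y_0 \subseteq Y$, and a finite complex is an ANR, so I may embed $Y_0$ in some $\mathbb R^N$ and choose a retraction $r\colon V \to Y_0$ from an open neighborhood $V$. Writing $f = (f^1,\dots,f^N)$ with $f^i \in C(X;\mathbb R)$, I observe that the functions pulled back along the $h_j$ form a point-separating subalgebra of $C(X;\mathbb R)$ (since $h = \Invlim h_j$ is a homeomorphism), hence are dense by Stone--Weierstrass; thus for some large $m$ there is a map $g_m\colon X_m \to V$ with $g_m \circ h_m$ uniformly close to $f$. Setting $f_m := r \circ g_m$ and applying cellular approximation makes $f_m\colon X_m \to Y$ cellular, while the closeness of $g_m \circ h_m$ to $f$ inside the ANR $Y_0$ forces $f_m \circ h_m \simeq f$.

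The main obstacle lies entirely in (a): the simplicial projections between nerves are only well defined up to contiguity, since forming one requires choosing, for each member of the refining cover, a containing member of the coarser cover. Producing an honest inverse system of spaces and maps --- not merely a system defined up to homotopy --- therefore demands either fixing consistent choices throughout or verifying that the homeomorphism type of the limit is independent of them. Once (a) is secured, part (b) is a soft consequence of the ANR property and the universal property of the inverse limit.
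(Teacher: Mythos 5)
There is a genuine gap, and it sits exactly where you suspect but is worse than you say. Part (a) as written does not produce a map at all: the fatal problem is not merely that the projections $N(\mathcal U')\to N(\mathcal U)$ require coherent choices (for an inverse \emph{sequence}, as in your metric reduction, coherence is free: choose each $p_{j+1,j}$ and let the other bonding maps be composites). The unacknowledged problem is that the canonical maps $\kappa_{\mathcal U}$ are not strictly compatible with \emph{any} choice of projections: $p\circ\kappa_{\mathcal U'}$ and $\kappa_{\mathcal U}$ agree only up to homotopy (contiguity), since the partition of unity subordinate to $\mathcal U'$ knows nothing about the chosen containments $U'\subseteq U$. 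Hence the family $\{\kappa_{\mathcal U}\}$ does \emph{not} assemble into a map $X\to \Invlim_j |N(\mathcal U_j)|$, and your injectivity/surjectivity/compactness argument has nothing to apply to. This is precisely why the classical proofs of the statement (the paper itself gives none, quoting \cite{ES}) proceed differently: either one embeds $X$ in a cube $[0,1]^A$ (the Hilbert cube when $X$ is metric) and takes for the $X_j$ compact polyhedral neighborhoods of the projections of $X$ to finite-dimensional faces $[0,1]^F$, with bonding maps and structure maps $h_j$ given by restrictions of coordinate projections, so that strict compatibility is automatic and compactness yields $X\cong\Invlim_j X_j$; or, in the metric case (Alexandroff--Freudenthal), one keeps the nerves of a star-refining sequence of covers but constructs the homeomorphism directly, identifying a thread of the limit with the unique point in the intersection of the corresponding nested closed stars, rather than via the canonical maps. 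Either way, the content of (a) is exactly the step your write-up defers.

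Part (b) follows a correct line (compact image in a finite subcomplex $Y_0$, ANR neighborhood retraction $r\:V\to Y_0$, Stone--Weierstrass applied to functions pulled back from finite stages; note that closure of that function class under sums and products already uses directedness together with the strict identities $h_j=p_{kj}\circ h_k$ from (a)), but it too has a gap: uniform closeness of $g_m\circ h_m$ to $f$ controls $g_m$ only on the subspace $h_m(X)\subseteq X_m$, and nothing forces $g_m(X_m)\subseteq V$; points of $X_m$ outside $h_m(X)$ may land outside $V$, so $r\circ g_m$ need not be defined on all of $X_m$. The repair is standard but must be stated: for an inverse system of compact Hausdorff spaces one has $h_m(X)=\bigcap_{k\geq m}p_{km}(X_k)$, and these images are directed downward, so the open set $g_m^{-1}(V)\supseteq h_m(X)$ contains $p_{km}(X_k)$ for some $k\geq m$ by compactness; replace $m$ by that $k$, set $f_k=r\circ g_m\circ p_{km}$, and then apply cellular approximation. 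With that insertion (b) is sound; (a) is the part that needs a genuinely different construction, not just more careful bookkeeping of choices.
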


\begin{Pro} \label{lim-onto} Under the hypotheses of Proposition \ref{Pro:esinvlim}, the map of sets
$$
\Dirlim_j\,  [X_j, Y] \to  [X,Y]
$$
is a bijection.
\end{Pro}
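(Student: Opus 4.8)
The plan is to show that the natural map $\Theta\colon \Dirlim_j [X_j,Y] \to [X,Y]$, induced by precomposition with the maps $h_j$, is a bijection; here the transition maps in the direct system are precomposition with the bonding maps $p_{ij}$, and the compatibility $h_j = p_{ij}\circ h_i$ guarantees that $\Theta$ is well defined. Surjectivity is immediate from Proposition~\ref{Pro:esinvlim}(b): given $f\colon X\to Y$, that result produces an index $m$ and a cellular map $f_m\colon X_m\to Y$ with $f_m\circ h_m\simeq f$, so that $[f] = \Theta([f_m])$.

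For injectivity, I would start with two classes having the same image. Since the index set is directed, represent them at a common level $m$ by cellular maps $a,b\colon X_m\to Y$, so that the hypothesis becomes $a\circ h_m\simeq b\circ h_m$ via a homotopy $H\colon X\times I\to Y$. The goal is then to produce an index $i\ge m$ with $a\circ p_{im}\simeq b\circ p_{im}$, since this is exactly the assertion that $[a]=[b]$ in $\Dirlim_j[X_j,Y]$.

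The key observation is that $X\times I \cong \Invlim_j (X_j\times I)$ and $X\times\{0,1\}\cong \Invlim_j(X_j\times\{0,1\})$ compatibly (the $I$-coordinate is constant across the system, so the limits split off the fixed factor), exhibiting $H$ as a map out of an inverse limit of finite CW pairs whose projections are the $h_j\times\mathrm{id}$. Moreover $H$ already factors through level $m$ on the ends: writing $g\colon X_m\times\{0,1\}\to Y$ for the map equal to $a$ on $X_m\times\{0\}$ and to $b$ on $X_m\times\{1\}$, one has $H|_{X\times\{0,1\}} = g\circ(h_m\times\mathrm{id})$. I would then apply a relative version of Proposition~\ref{Pro:esinvlim}(b), with the approximation performed rel the subsystem $\{X_j\times\{0,1\}\}$ on which the map is already pinned down, to obtain an index $k\ge m$ and a cellular map $\Phi\colon X_k\times I\to Y$ with $\Phi|_{X_k\times\{0,1\}} = g\circ(p_{km}\times\mathrm{id})$. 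Its restrictions to the two ends are then precisely $\Phi_0 = a\circ p_{km}$ and $\Phi_1 = b\circ p_{km}$, so $\Phi$ is a homotopy at level $k$ from $a\circ p_{km}$ to $b\circ p_{km}$, completing the argument with $i=k$.

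I expect the main obstacle to be exactly this endpoint control, i.e.\ upgrading the absolute approximation of Proposition~\ref{Pro:esinvlim}(b) to a relative one. The naive route -- approximate $H$ by some $\Phi\colon X_k\times I\to Y$ with $\Phi\circ(h_k\times\mathrm{id})\simeq H$ and then read off the ends -- fails, because it only yields $\Phi_0\circ h_k\simeq a\circ p_{km}\circ h_k$ over $X$, and extracting $\Phi_0\simeq a\circ p_{km}$ at a finite stage is again an instance of the injectivity being proved; attempting to correct the ends via the homotopy extension property on $X_k$ runs into the same circularity. The relative statement escapes this because it is established directly from the nerve construction underlying the Eilenberg--Steenrod theorem (one refines a cover of $X$ compatibly with a chosen cover of the subspace where the map is already fixed), and is of the same character as the function-space results of Dowker and Spanier invoked later; this is the step I would develop in full detail.
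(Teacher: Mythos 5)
Your proposal is correct and takes essentially the same route as the paper: surjectivity read off from Proposition~\ref{Pro:esinvlim}(b), and injectivity reduced to a cover/nerve refinement argument on the pair $(X\times I, X\times\{0,1\})$ that keeps the ends pinned down at a finite stage of the inverse system. The paper handles this second step by citing Spanier's method of proof of \cite[th.~13.4]{Span} (observing that it generalizes from $Y=S^n$ to an arbitrary finite complex once the limit is taken in sets), and that method is precisely the relative approximation statement you isolate as the crux, so your deferral of that lemma matches the level of detail of the paper's own proof.
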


\begin{proof} Surjectivity is a direct consequence of
Proposition \ref{Pro:esinvlim} (b).
Injectivity
 is a consequence of Spanier's method of proof of \cite[th.\ 13.4]{Span}.
 In Spanier's case, $Y = S^n$ and $X$ has Lebesgue covering
 dimension at most $2n-2$ and his limit is taken in the category of abelian groups.
 However, Spanier remarks that the dimension condition can be dropped provided that the
 limit is taken in the category of sets \cite[p.\ 228]{Span}. Furthermore, an
 inspection of his proof shows that it generalizes without change
to $Y$ an arbitrary finite simplicial complex. The argument is then
completed by recalling that any finite CW complex has the homotopy type of a simplicial
 complex.
\end{proof}

We will need to extend this proposition to a certain class of pairs.
Suppose now that $(X,A)$ is a pair, where $X$ is a compact Hausdorff space and $A\subset
X$ is a closed cofibration. We assume that $(X,A)$ is expressed as an inverse limit
of pairs $(X_j,A_j)$ where the latter is a finite CW pair.  Such a decomposition
exists by the relative version of \cite[Ch.\ X, th.\ 10.1, 11.9]{ES}. As above, write $p_{ij} \colon X_i \to X_j$ for   $j \leq i$ and 
$h_j \colon X \to X_j$ for the  structure maps. We use the same notation for the restrictions of these maps to $A_j$ and to 
$A$, respectively.  Let
$Y$ be a CW complex, and suppose that one is given a fixed map
$g_m\: A_m \to Y$ for some $m$ and
define $g_j\: A_j \to Y$ for $j > m$  by $g_m\circ p_{jm}$.
Define $g$ to be the composite $g_m\circ h_m$. Let
$$
[X,Y;g]
$$
denote the set of homotopy classes of maps $X \to Y$ which coincide with
 $g$ on the subspace $A$ (where homotopies are
 required to be constant on $A$). Similarly, we have $[X_j,Y;g_j]$ and a map
of sets
$$
[X_j,Y;g_j]\to [X,Y;g]
$$
(for $j \ge m$) which is compatible with the index $j$.

\begin{Lem} \label{technical} Assume there are compatible retractions
$r_j\: X_j \to A_j$ inducing a retraction $r\: X \to A$.
Then the map
$$
\Dirlim_j \, [X_j,Y;g_j]\to [X,Y;g]
$$
is a bijection.
\end{Lem}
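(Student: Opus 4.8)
The plan is to bootstrap from the absolute statement, Proposition~\ref{lim-onto}, applied both to the tower $\{X_j\}$ and to the tower $\{A_j\}$, using the compatible retractions to pass from free homotopy classes to classes rel $A$. The organizing observation is that restriction along the cofibration $A\subset X$ gives a fibration $\rho\colon F(X,Y)\to F(A,Y)$ whose fibre over $g$ is the space $F(X,Y;g)$, so that $[X,Y;g]=\pi_0 F(X,Y;g)$, and likewise $[X_j,Y;g_j]=\pi_0 F(X_j,Y;g_j)$ for the fibration $\rho_j\colon F(X_j,Y)\to F(A_j,Y)$. The retractions supply sections $s=r^*$ and $s_j=r_j^*$ of $\rho$ and $\rho_j$ (since $r|_A=\mathrm{id}$), with compatible basepoints $g\circ r$ and $g_j\circ r_j$ in the fibres; more importantly, precomposition with the $r_\bullet$ gives a family of extension operators $F(A_\bullet,Z)\to F(X_\bullet,Z)$ that commute with the structure maps $h_\bullet$ and $p_{\bullet\bullet}$ precisely because the retractions are compatible, i.e.\ $r_j\circ h_j=(h_j|_A)\circ r$. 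This tower-compatibility is exactly what lets a correction made over $A_j$ be spread over $X_j$ without disturbing the limiting behaviour.

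For surjectivity I would start with $f\colon X\to Y$ extending $g$ and use Proposition~\ref{lim-onto} to produce a free homotopy $f\simeq\phi\circ h_n$ for some $\phi\colon X_n\to Y$. Restricting to $A$ and applying the injectivity half of Proposition~\ref{lim-onto} to the tower $\{A_j\}$, together with the homotopy extension property of $A_n\subset X_n$, lets me replace $\phi$ by a map agreeing on the nose with $g_n$ on $A_n$; the resulting free homotopy $\phi\circ h_n\simeq f$ then restricts on $A$ to a loop $\lambda$ at $g$ in $F(A,Y)$. The remaining task is to absorb $\lambda$: I would realize $\lambda$ at a finite stage, spread it over $X_i$ by means of the retraction $r_i$, and use this to modify $\phi$ into a map $\phi'$ still extending $g_i$ for which $\phi'\circ h_i\simeq f$ rel $A$ (the two copies of $\lambda$ cancelling as $\lambda^{-1}\ast\lambda$). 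Injectivity is handled in the same spirit one dimension up: a rel-$A$ homotopy between $f_j\circ h_j$ and $f_j'\circ h_j$ is a map $X\times I\to Y$ that is constant $=g$ on $A\times I$ and prescribed on $X\times\partial I$; approximating it freely on $X_n\times I$ via Proposition~\ref{lim-onto}, and correcting its restriction to $(A_n\times I)\cup(X_n\times\partial I)$ to the prescribed values via the absolute result on the relevant subtower and the cofibration $(A_i\times I)\cup(X_i\times\partial I)\subset X_i\times I$, yields a rel-$A_i$ homotopy between $f_j\circ p_{ij}$ and $f_j'\circ p_{ij}$ at a finite stage.

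The main obstacle I anticipate is precisely the passage from free homotopies, which is where Proposition~\ref{lim-onto} has content, to homotopies rel $A$; equivalently, controlling $\pi_0$ of the fibre $F(X,Y;g)$ against $\pi_0$ of the total space $F(X,Y)$, that is, the action of $\pi_1(F(A,Y),g)$ on $[X,Y;g]$ and the matching of endpoint data at the corners $A_\bullet\times\partial I$. The compatible retractions are what resolve this: they split the restriction fibrations and give tower-compatible extensions of maps and homotopies from $A_\bullet$ to $X_\bullet$, so that every residual loop living over $A$ can be transported over $X$ and cancelled at a finite stage. I would therefore organize the argument so that each use of Proposition~\ref{lim-onto} is immediately followed by a retraction-spreading step converting the free conclusion into the required rel-$A$ conclusion, checking at each stage that the retraction-induced corrections are compatible with the bonding maps and so survive the passage to $\Dirlim_j$.
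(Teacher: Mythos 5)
Your setup (the restriction fibration $F(X,Y)\to F(A,Y)$, the sections induced by the retractions, and the reduction to Proposition \ref{lim-onto}) matches the paper's framework, and your injectivity half is essentially sound: approximating the rel-$A$ homotopy freely on $X_n\times I$ and then using the injectivity part of the absolute result on the subtower $(A_i\times I)\cup(X_i\times\partial I)$ together with the homotopy extension property to force the prescribed boundary values does yield a rel-$A_i$ homotopy at a finite stage, because the conclusion there is merely the \emph{existence} of a homotopy. The genuine gap is in the surjectivity half, at the step ``I would realize $\lambda$ at a finite stage.'' The loop $\lambda$ lives in $\pi_1(F(A,Y),g)=[A\times S^1,Y;g]$, and realizing its \emph{based} homotopy class at a finite stage is exactly the surjectivity assertion of the present lemma for the tower of pairs $(A_j\times S^1,A_j\times\ast)$ (equivalently, the $n=1$ case of Theorem \ref{thm:Flim} for the tower $\{A_j\}$, which the paper deduces \emph{from} this lemma). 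If you try to prove that instance by your own method, the discrepancy loop you must absorb again lies in $\pi_1(F(A,Y),g)$: the recursion never reduces to anything already established, so the argument is circular.

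Nor does the weaker statement that Proposition \ref{lim-onto} actually provides suffice, namely realization of $\lambda$ only up to \emph{free} homotopy on the tower $\{A_j\times S^1\}$. A finite-stage loop $\mu_i$ with $\mu_i\circ(h_i\times\mathrm{id})$ freely homotopic to $\lambda$ determines $[\lambda]$ only up to conjugation in $\pi_1(F(A,Y),g)$; after transporting $\phi$ by $\mu_i$, the residual loop is $\overline{\mu_i\circ(h_i\times\mathrm{id})}\ast\lambda$, which is nullhomotopic rel endpoints only if the \emph{based} classes agree, so your $\lambda^{-1}\ast\lambda$ cancellation fails exactly when the conjugation discrepancy is nontrivial. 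The retractions cannot repair this: spreading by $r_i$ applies to loops already defined at stage $i$, and does nothing to push the limit-level loop $\lambda$ down to a finite stage. This is precisely where the paper's proof takes a different route: it never realizes loops at finite stages at all, but instead uses the splitting $r^*$ of the fibration $F(X,Y)\to F(A,Y)$ (and $r_j^*$ at each stage) to argue that the comparison maps $u\colon[X,Y;g]\to[X,Y]$ and $u_j\colon[X_j,Y;g_j]\to[X_j,Y]$ are injective, after which the lemma follows from Proposition \ref{lim-onto} applied to the towers $\{X_j\}$ and $\{A_j\}$ by a diagram chase carried out entirely at the level of $\pi_0$, with no $\pi_1$-realization needed.
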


\begin{proof} Let $i\: A \to X$ and $i_j\: A_j\to X_j$ be the inclusions,
and let $u\: [X,Y;g] \to [X,Y]$ and $u_j\: [X_j,Y;g_j] \to [X,Y]$
be the evident maps.
For each $j$, one has a commutative diagram of sets
$$
\xymatrix{
[X_j,Y;g_j] \ar[r] \ar[d]_{u_j} & [X,Y;g] \ar[d]^{u}\\
[X_j,Y] \ar[r]^{h_j^*} \ar[d]_{i_j^*} & [X,Y]\ar[d]^{i^*} \\
[A_j,Y] \ar[r] & [A,Y]
}
$$
where the bottom terms are pointed sets. Furthermore, if $r\: X \to A$ is
a retraction, then $g\circ r$ is a basepoint for $[X,Y]$ making
$i^*$ into  a split surjection of based sets. The right column
is in fact the tail-end of the long exact homotopy sequence of the fibration
$F(X,Y) \to F(A,Y)$, which is also equipped with section. It follows
from this observation that $u$ is one-to-one. Similarly $u_j$ is one-to-one.

Taking direct limits results in a diagram such that middle and bottom maps
are isomorphisms. The rest of the argument follows from an elementary diagram chase,
using the fact that $u_j$ and $u$ are one-to-one (we leave the details to the reader).
\end{proof}

Now, let $f_m\: X_m \to Y$ be a fixed map and
define $f_j\: X_j \to Y$ for $j > m$  by $f_m\circ p_{jm}$.
Define
$f$ to be the composite $f_m\circ h_m$. Then the map of function spaces
$F(X_j,Y) \to F(X,Y)$ sends $f_j$ to $f$, so we have a map of based spaces that is
compatible with the inverse system.

\begin{Thm} \label{thm:Flim}
 The  inverse system of based spaces above induces an isomorphism of groups
 $$
 \Dirlim_j \pi_n(F(X_j, Y);f_j) \,\, \cong \,\,  \pi_n(F(X, Y);f)
 $$
in all degrees.
 \end{Thm}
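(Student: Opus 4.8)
The plan is to translate the homotopy groups of the function spaces into relative homotopy sets of the form $[-,Y;-]$ and then invoke Lemma \ref{technical}, where the genuine passage from finite complexes to the compact metric limit has already been carried out. First I would recall the exponential law in the following form: for a based space $(W,w_0)$ and a map $\phi\:W\to Y$, a based map $(S^n,*)\to (F(W,Y),\phi)$ corresponds under adjunction to a map $S^n\times W\to Y$ whose restriction to $\{*\}\times W$ is $\phi$, and a based homotopy between two such maps corresponds to a homotopy of the adjoints that is constant on $\{*\}\times W$. Identifying $\{*\}\times W\cong W$ and regarding $\phi$ as a map on this subspace, this gives a natural bijection
$$
\pi_n(F(W,Y);\phi)\;\cong\;[\,S^n\times W,\; Y;\;\phi\,],
$$
where on the right the distinguished subspace is $A=\{*\}\times W$ and homotopies are required to be constant on $A$. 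For $n\ge 1$ this is an isomorphism of groups, the group structure on both sides being induced by the pinch comultiplication of $S^n$.

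Next I would apply this bijection with $W=X_j$ and with $W=X$, and check that the pair $(S^n\times X,\{*\}\times X)$ satisfies the hypotheses of Lemma \ref{technical}. Since $S^n$ is compact and $X$ is compact Hausdorff, $S^n\times X$ is compact Hausdorff, and $\{*\}\times X$ is a closed cofibration because $\{*\}\hookrightarrow S^n$ is one (basepoints being nondegenerate). Because finite products commute with inverse limits, the pair $(S^n\times X,\{*\}\times X)$ is the inverse limit of the finite CW pairs $(S^n\times X_j,\{*\}\times X_j)$, with structure maps $\mathrm{id}\times p_{ij}$ and $\mathrm{id}\times h_j$; here $\{*\}\times X_j$ is visibly a subcomplex. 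The projections $r_j\:S^n\times X_j\to\{*\}\times X_j$, $(s,x)\mapsto(*,x)$, are compatible retractions inducing a retraction $r\:S^n\times X\to\{*\}\times X$. Finally, the chosen maps match the data of Lemma \ref{technical}: precomposition with $\mathrm{id}\times h_j$ carries the adjoint of $f_j$ to the adjoint of $f$, compatibly with the maps $\mathrm{id}\times p_{ij}$.

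With these verifications in hand, Lemma \ref{technical} supplies a bijection
$$
\Dirlim_j\,[\,S^n\times X_j,\;Y;\;f_j\,]\;\xrightarrow{\;\cong\;}\;[\,S^n\times X,\;Y;\;f\,],
$$
which, under the exponential law above, is precisely the displayed map $\Dirlim_j\pi_n(F(X_j,Y);f_j)\to\pi_n(F(X,Y);f)$. Since every arrow in sight is induced by a map of spaces compatible with the comultiplication of $S^n$, this bijection is a homomorphism for $n\ge 1$, hence an isomorphism of groups; for $n=0$ it is a bijection of pointed sets.

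I expect the main obstacle to be one of careful bookkeeping rather than of substance: running the exponential law so that the basepoint condition defining $\pi_n$ becomes exactly the ``constant on $A$'' condition of $[-,Y;-]$, and confirming the interchange of $S^n\times(-)$ with the inverse limit \emph{at the level of pairs} in the compactly generated category, so that Lemma \ref{technical} applies verbatim. The real content---the limit argument relating finite complexes to the compact metric space $X$---has already been isolated in Lemma \ref{technical} (and through it in Proposition \ref{lim-onto} and Spanier's method), so no new limit argument is needed here.
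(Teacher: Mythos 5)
Your proposal is correct and follows essentially the same route as the paper's proof: identify $\pi_n(F(X,Y);f)$ with relative homotopy classes $[X\times S^n, Y; f]$ via the exponential law and apply Lemma \ref{technical} using the compatible retractions onto $X_j\times\ast$. The only cosmetic difference is that you treat $n=0$ uniformly through the same lemma, while the paper disposes of it separately via Proposition \ref{lim-onto}; the substance is identical.
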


\begin{proof} By \cite[prop.\ IX.2]{MacLane},
the limit of a direct system of (abelian) groups coincides with
the limit taken in the category of sets.
\smallskip

\flushleft{Case 1.} $n = 0$. This case is just a reformulation of
 Proposition \ref{lim-onto}.
\medskip

{\flushleft Case 2.} $n>0$.
Observe that
$$
[X\times S^n,Y;f] \,\, = \,\, \pi_n(F(X, Y);f)\, , $$
 where on the left we are taking homotopy classes of maps
 $X \times S^n \to Y$ which coincide
 with $f$ on $X\times \ast = X$ .
 Note that each inclusion
$X_j\times \ast \subset X_j\times S^n$ is a retract, and these retractions
are compatible.
The result then follows from Lemma \ref{technical} with
$X\times S^n$ in place of $X$, $X\times \ast$ in place of $A$, $X_j \times S^n$
in place of $X_j$ and $X_j \times \ast$ in place of $A_j$.
 \end{proof}

\subsection*{Limits and section spaces}
Assume that  $(X,A) = \Invlim_j (X_j,A_j)$  as above, where each $(X_j,A_j)$ is
a finite CW pair. Suppose that for some index $m$ one is given a lifting problem
$$
\xymatrix{
A_m \ar[r]^{g_m} \ar[d]_{\cap} & E \ar[d]^p \\
X_m \ar[r]_{f_m} \ar@{..>}[ur] & B
}
$$
denoted $\mathcal D_m$. Here we assume that
$p\:E\to B$ is a fibration in which $E$ and $B$ have the homotopy type of CW complexes.
Using the maps $(X_j,A_j) \to (X_m,A_m)$, we obtain another lifting problem, denoted
$\mathcal D_j$. Then one has maps $\Gamma(\mathcal D_j) \to \Gamma(\mathcal D_{j+1})$ for $j \ge m$.
Let $\tilde f_m \: X_m \to E$ be any lift. Then we obtain
basepoints $\tilde f_j\in \mathcal D_m$ for $j \ge m$.

Let $f\: X \to B$ denote the composite of $h_m\circ f_m$ and similarly, let
$g\: A \to E$ be the composite $h_m\circ g_m$, where $h_m \: (X,A) \to (X_m,A_m)$
is the structure map. Then we get a lifting problem $\mathcal D$
$$
\xymatrix{
A \ar[r]^g \ar[d]_{\cap} & E \ar[d]^p \\
X \ar[r]_f \ar@{..>}[ur] & B \, .
}
$$
Let $\tilde f\: X \to E$ be the basepoint of $\mathcal D$ determined by $\tilde f_m$.

\begin{Thm} \label{thm:lim-section} The map of based sets
$$
\Dirlim_j \, \pi_n (\Gamma(\mathcal D_j);\tilde f_j) \to \pi_n (\Gamma(\mathcal D);\tilde f)
$$
is an isomorphism in every degree $n \ge 0$, where the direct limit is
taken in the category of sets.
\end{Thm}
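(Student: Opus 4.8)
The plan is to reduce the statement to the function-space limit theorem, Theorem \ref{thm:Flim}, by means of the fibration of Proposition \ref{Pro:pullback section sequence}. For each $j$ that proposition exhibits $\Gamma(\mathcal D_j)$ as the fibre over $f_j$ of the fibration $p_*\colon F(X_j,E;g_j)\to F(X_j,B;p\circ g_j)$, and these fibrations are compatible with the structure maps of the inverse system, mapping in the limit to $p_*\colon F(X,E;g)\to F(X,B;p\circ g)$ with fibre $\Gamma(\mathcal D)$. Taking the long exact homotopy sequences at the chosen basepoints $\tilde f_j$ and $\tilde f$ and passing to the direct limit over $j$ (which is exact, since filtered colimits of groups preserve exactness), a five-lemma argument will reduce the theorem to showing that the two comparison maps on the \emph{function-space} terms are isomorphisms in every degree. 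The crux is therefore a constrained analogue of Theorem \ref{thm:Flim}, asserting for any CW complex $Y$ and compatible boundary data $g_j\colon A_j\to Y$ that the natural map $\Dirlim_j\pi_n(F(X_j,Y;g_j))\to\pi_n(F(X,Y;g))$ is an isomorphism for all $n\ge 0$.

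To establish this constrained version I would use the restriction fibration $F(X_j,Y)\to F(A_j,Y)$, whose fibre over $g_j$ is precisely $F(X_j,Y;g_j)$ and whose basepoint is a compatible lift $\tilde f_j$ restricting to $g_j$ on $A_j$. Because $(X,A)=\Invlim_j(X_j,A_j)$ is an inverse limit of finite CW pairs, we have $A=\Invlim_j A_j$ with each $A_j$ finite CW, so Theorem \ref{thm:Flim} applies verbatim to both the $X$-system and the $A$-system, yielding isomorphisms $\Dirlim_j\pi_n(F(X_j,Y))\cong\pi_n(F(X,Y))$ and $\Dirlim_j\pi_n(F(A_j,Y))\cong\pi_n(F(A,Y))$ in every degree. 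Passing to the direct limit of the long exact sequences of the restriction fibrations, comparing with the limit fibration $F(X,Y)\to F(A,Y)$, and invoking the five lemma then gives the desired isomorphism on the fibre terms $F(X_j,Y;g_j)\to F(X,Y;g)$. Applying this twice, once with target $E$ and constraint $g$, and once with target $B$ and constraint $p\circ g$, supplies exactly the two isomorphisms required in the first paragraph, and the result follows.

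The main obstacle is the behaviour at the bottom of these long exact sequences, where $\pi_1$ need not be abelian and $\pi_0$ is only a pointed set, so the naive five lemma does not apply directly. I would dispose of this by the reparametrization already used in the proof of Theorem \ref{thm:Flim}: for $n\ge 1$ one has $\pi_n(\Gamma(\mathcal D);\tilde f)\cong\pi_0(\Gamma(\mathcal D^{(n)}))$, where $\mathcal D^{(n)}$ is the enlarged lifting problem over $X\times S^n$ relative to the subspace $A\times S^n\cup X\times\ast$, equipped with the evident boundary data built from $g$ and $\tilde f$; since the inclusion $A\times S^n\cup X\times\ast\hookrightarrow X\times S^n$ is a cofibration and $\mathcal D^{(n)}$ is again an inverse limit of finite CW pairs, this reduces every degree to the case $n=0$ uniformly, both for the section spaces and for the constrained function spaces. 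For $n=0$ the relevant sequences are exact sequences of pointed sets, and because the neighbouring function-space comparison maps are already bijections (by Theorem \ref{thm:Flim}, ultimately Proposition \ref{lim-onto} and Lemma \ref{technical}), injectivity and surjectivity of the middle map can be verified by the standard pointed-set diagram chase. Assembling these pieces completes the proof.
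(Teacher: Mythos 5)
Your proposal has the same skeleton as the paper's proof: both realize $\Gamma(\mathcal D_j)$ as the fibre of the fibration $F(X_j,E;g_j)\to F(X_j,B;p\circ g_j)$ of Proposition \ref{Pro:pullback section sequence}, compare the resulting long exact homotopy sequences over $j$, and reduce everything to the function-space limit theorem by a colimit diagram chase. The genuine difference is your second paragraph, and it is a real improvement. The paper simply cites Theorem \ref{thm:Flim} to conclude that the comparison maps on the terms $\pi_n(F(X_j,E;g_j))$ and $\pi_n(F(X_j,B;p\circ g_j))$ are isomorphisms in the limit; but Theorem \ref{thm:Flim} as stated concerns only the \emph{unconstrained} spaces $F(X_j,Y)$, and when $A\neq\emptyset$ the constrained analogue is not covered by it, nor by Lemma \ref{technical}, whose hypothesis of compatible retractions $r_j\:X_j\to A_j$ fails for general CW pairs (it holds in the paper only for pairs of the form $(X_j\times S^n, X_j\times\ast)$). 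Your intermediate step --- running the limit comparison once more for the restriction fibrations $F(X_j,Y)\to F(A_j,Y)$, where Theorem \ref{thm:Flim} does apply to both the $X$-system and the $A$-system --- supplies exactly the missing relative statement. So your route is more complete than the paper's in the case $A\neq\emptyset$; in the paper's actual applications (gauge and projective gauge groups) $A=\emptyset$, which is why the gap is harmless there. Your reduction of all degrees to $\pi_0$ via the enlarged lifting problems over the pairs $(X\times S^n,\, A\times S^n\cup X\times\ast)$ is also sound and mirrors Case 2 of the proof of Theorem \ref{thm:Flim}.

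One caution: the phrase ``standard pointed-set diagram chase'' conceals the only delicate point. The five lemma is false for exact sequences of pointed sets; at the $\pi_0$ level you must use the action of $\pi_1$ of the base on $\pi_0$ of the fibre (two components of the fibre have the same image in $\pi_0$ of the total space iff they lie in a single orbit, and stabilizers are images of $\pi_1$ of the total space at suitably chosen basepoints), together with the naturality of these actions and of the comparison maps, to carry out both the surjectivity and the injectivity chases. The paper is no more explicit --- its own chase is written additively, hence is only literally valid for $n\ge 1$, and injectivity is ``omitted'' --- but since your argument deliberately funnels every degree into $\pi_0$, this is the step you would have to write out in full for the proof to be complete.
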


\begin{proof}   For each $n$, one has a map of long exact homotopy sequences
$$
\xymatrix{
\cdots \ar[r]^<<<<<{\partial} & \pi_n(\Gamma(\mathcal D_j);\tilde f_j)\ar[r]^<<<<<{a_j} \ar[d]_c &
\pi_n(F(X_j,E);g_j) \ar[r]^{b_j} \ar[d]_d & \pi_n(F(X_j,B);f_j)\ar[r]\ar[d]^e & \cdots \\
\cdots \ar[r]_<<<<<{\partial} & \pi_n(\Gamma(\mathcal D);\tilde f)\ar[r]_<<<<<<{a}  &
\pi_n(F(X,E);g) \ar[r]_b  & \pi_n(F(X_j,B);f)\ar[r] & \cdots
}
$$
as given by Proposition \ref{Pro:pullback section sequence}.

To prove surjectivity, let $x\in \pi_n (\Gamma(\mathcal D);\tilde f)$ be any element.
By Theorem \ref{thm:Flim}, $a(x) = d(y)$ for some $y,$
provided that $j$ is  sufficiently large. Then $b_j(y)$
is trivial provided $j$ is large, again by  \ref{thm:Flim}.
It follows that $y = a_j(z)$ for some $z$. Then $a(c(z)-x) = 0$,
so $x = c(z) - \partial u$ for some $u$. If $j$ is large,
one has $u = e(u')$ for some $u'$. Consequently,
$x = c(z - \partial u')$. This establishes surjectivity.
A similar diagram chase, which we omit, gives injectivity.
\end{proof}

\section{Localization of function spaces revisited} \label{sec:HMR}

The purpose of this section is to extend the Hilton-Mislin-Roitberg localization result  (Proposition \ref{Pro:HMR}) for function 
spaces $F(X, Y)$
to the case $X$ compact metric and $Y$ nilpotent CW provided the particular function space component
 is known, a priori, to be nilpotent.

 Suppose that $X$ is a compact metric space and $X = \Invlim_j X_j$ as above, where
 each $X_j$ is a finite CW complex. Let $Y$ be a nilpotent space.
 Let $\ell_Y\: Y \to Y_\QQ$ be the rationalization map. Let
 $f\: X \to Y$ be a fixed map and consider the connected component
  $F(X,Y)_{(f)}$ of the function space.

 \begin{Thm} \label{thm:HMR extended} If $F(X,Y)_{(f)}$ is nilpotent,
  then the induced map
$$
F(X, Y)_{(f)} \to
F(X, Y_{\QQ})_{(\ell_Y\circ f)}
$$
is a rationalization map.
\end{Thm}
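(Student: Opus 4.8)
The plan is to leverage the limit machinery of Section \ref{sec:functions} to reduce the compact metric case to the finite-complex case already handled by Hilton-Mislin-Roitberg (Proposition \ref{Pro:HMR}). Write $X = \Invlim_j X_j$ with each $X_j$ a finite CW complex, as supplied by Proposition \ref{Pro:esinvlim}, and let $f_m \: X_m \to Y$ be a cellular map approximating $f$ (again by Proposition \ref{Pro:esinvlim}(b)), with $f_j = f_m \circ p_{jm}$ for $j \ge m$. The rationalization map $\ell_Y \: Y \to Y_\QQ$ induces, for each $j$, a map of function-space components $F(X_j, Y)_{(f_j)} \to F(X_j, Y_\QQ)_{(\ell_Y \circ f_j)}$, which is a rationalization map by Proposition \ref{Pro:HMR} since $X_j$ is a finite CW complex. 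The goal is to deduce that the corresponding map over $X$ is a rationalization map, and the cleanest route is to check the defining property: the induced map on rational homotopy groups is an isomorphism (given that the target is already a rational space and the source component is assumed nilpotent, hence rationalizable).

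The key step is to pass to the limit on homotopy groups. By Theorem \ref{thm:Flim}, for each $n$ the natural maps induce isomorphisms
$$
\Dirlim_j \pi_n\bigl(F(X_j, Y); f_j\bigr) \,\,\cong\,\, \pi_n\bigl(F(X, Y); f\bigr)
$$
and likewise
$$
\Dirlim_j \pi_n\bigl(F(X_j, Y_\QQ); \ell_Y \circ f_j\bigr) \,\,\cong\,\, \pi_n\bigl(F(X, Y_\QQ); \ell_Y \circ f\bigr).
$$
Since a direct limit of groups is exact and commutes with the functor $- \otimes \QQ$, tensoring the first isomorphism with $\QQ$ and comparing to the second via the $\ell_Y$-induced maps gives a commutative square of direct-limit systems. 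At each finite stage $j$ the comparison map is rationalization by Proposition \ref{Pro:HMR}, hence an isomorphism after tensoring the source with $\QQ$; passing to the direct limit (which preserves isomorphisms) then yields that
$$
\pi_n\bigl(F(X, Y); f\bigr) \otimes \QQ \,\,\longrightarrow\,\, \pi_n\bigl(F(X, Y_\QQ); \ell_Y \circ f\bigr)
$$
is an isomorphism in every degree. Because $F(X, Y)_{(f)}$ is nilpotent by hypothesis and $F(X, Y_\QQ)_{(\ell_Y \circ f)}$ is a rational nilpotent space, an isomorphism on rationalized homotopy groups suffices to conclude that the map is a rationalization, as recorded in the conventions of Section \ref{sec:conventions}.

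The main obstacle I anticipate is bookkeeping around basepoints and the identification of components. One must verify that the direct-limit systems on both sides are indexed compatibly and that the basepoint $f_j$ on each side maps correctly under $\ell_Y$ to $\ell_Y \circ f_j$, so that Theorem \ref{thm:Flim} applies verbatim to both $Y$ and $Y_\QQ$; this requires knowing $Y_\QQ$ is itself a (nilpotent) CW space so that Theorem \ref{thm:Flim} is legitimately invoked there. A secondary subtlety is that the nilpotence hypothesis on $F(X, Y)_{(f)}$ is exactly what licenses the reduction to a homotopy-group statement --- without it, an isomorphism on rational homotopy groups would not guarantee a rationalization map --- so this assumption must be used precisely at the final step rather than earlier.
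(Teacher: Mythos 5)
Your proposal is correct and takes essentially the same route as the paper: factor $f$ through a finite stage $X_m$, apply Theorem \ref{thm:Flim} to both $Y$ and $Y_\QQ$ to identify the homotopy groups over $X$ as direct limits, invoke Proposition \ref{Pro:HMR} at each finite stage, and use that rationalization commutes with direct limits to conclude the map induces an isomorphism on rationalized homotopy groups. The paper's proof is precisely this comparison of the two limit isomorphisms, carried out by applying the rationalization functor to the resulting commutative square.
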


\begin{proof} By Proposition \ref{Pro:esinvlim}, we can assume without loss in generality that $f$ factors as
$X \to X_m \to Y$. Let $f_m \: X_m \to Y$ denote the factorizing map, and define
$f_j\: X_j \to Y$ for $j > m$ to be the composite $p_{jm}\circ f_m$, where
$p_{jm}\: X_j \to X_m$ is the structure map in the inverse system.
The approximation $X \cong \Invlim_j X_j$ gives
rise to a commutative diagram
$$
\xymatrix{
{\Dirlim}_j \pi_n(F(X_j,Y);f_j) \ar[r]^>>>>>>>>>{\cong} \ar[d] & \pi_n(F(X,Y);f) \ar[d]\\
\Dirlim_j \pi_n(F(X_j,Y_{\QQ});\ell_Y \circ f_j) \ar[r]_>>>>>{\cong}  & \pi_n(F(X,Y_{\QQ});\ell_Y \circ f) \\
}
$$
where the horizontal maps are bijections
by Theorem \ref{thm:Flim}. Apply the rationalization functor to the diagram and
use the fact that rationalization commutes with direct limits. This results in
a commutative diagram
$$
\xymatrix{
\Dirlim_j \pi_n(F(X_j,Y);f_j)_\QQ \ar[r]^>>>>>>>>>{\cong} \ar[d]_\cong & \pi_n(F(X,Y);f)_\QQ \ar[d]\\
\Dirlim_j \pi_n(F(X_j,Y_{\QQ});\ell_Y \circ f_j) \ar[r]_>>>>>>{\cong}  & \pi_n(F(X,Y_{\QQ});\ell_Y \circ f) \\
}
$$
where the left vertical map is an isomorphism by Proposition \ref{Pro:HMR}.
It follows that the right vertical map is an isomorphism as well.
\end{proof}


\section{Proof of the main results} \label{sec:sections}
We are now in a position to prove the main theorems in their complete generality.

\begin{proof}[{\bf Proof of Theorem \ref{bigthm:mainB}}]
Recall we are assuming $X$ is a compact metric space
and $G$ is a connected CW topological group having the homotopy type of a
finite complex. We need to establish an isomorphism
$$\pi_*(F(X, G)\idc) \otimes \QQ \,\, \cong  \,\, \Check{H}^*(X, \QQ)
\widetilde{\otimes} \big(\pi_*(G) \otimes \QQ \big)\, .$$
By Theorem \ref{thm:mainB}, the corresponding result
holds for $X_j$ a finite CW complex.
Write $X = \Invlim X_j$
as usual for
finite complexes $X_j$. Then for each $j$ we have a natural isomorphism
\[
\pi_*(F(X_j, G)\idc) \otimes \QQ \,\, \cong \,\,  {H}^*(X_j; \QQ)
\widetilde{\otimes} \big(\pi_*(G) \otimes \QQ \big)\, .
\]
 Take direct limits on both sides
and use the fact that
\[
\Dirlim (A_j \otimes B)\,\, \cong \,\,(\Dirlim A_j) \otimes B
\]
for abelian groups to obtain the isomorphism
\[
\big(\Dirlim \pi_*(F(X_j, G)\idc)\big) \otimes \QQ \,\,\cong \,\, \bigg( \Dirlim {H}^*(X_j; \QQ)\bigg)
\widetilde{\otimes} \big(\pi_*(G) \otimes \QQ \big)\,   .
\]
The continuity property of  \v{C}ech cohomology \cite[th.\ 12.1]{ES}
implies that
\[
 \Dirlim {H}^*(X_j; \QQ)\,\, \cong \,\, \Check {H}^*(X; \QQ)\,  .
\]
Then use   Theorem \ref{thm:Flim} to identify
\[
\Dirlim \pi_*(F(X_j, G)\idc) \,\, \cong \,\, \pi_*(F(X, G)\idc)
\]
 which gives the result at the level of homotopy groups.

  Finally,   use Theorem \ref{thm:HMR extended}
  to obtain  a homotopy equivalence of $H$-spaces
  $$(F(X, G)\idc )_\QQ \,\, \simeq  \,\, F(X, G_\QQ)\idc \, .$$
 The last part of Theorem \ref{bigthm:mainB} now follows from Propositions \ref{Pro:Hnil}
 and \ref{Pro:H-abelian}.
\end{proof}

\begin{proof}[{\bf Proof of Addendum \ref{addB}}]
As was observed in Remark \ref{dispense_finiteness}, the homotopy group
calculation of Theorem \ref{thm:mainB} holds when $G$ is a group-like H-space,
and the homotopy commutativity holds whenever $G$ is rationally homotopy commutative.
The above proof of Theorem \ref{bigthm:mainB}, which uses Theorem \ref{thm:mainB},
therefore holds in the stated generality.
\end{proof}

We refocus   on the case of the adjoint  and projective adjoint bundles.
As usual, let $X$ be a compact metric space, and write
$X = \Invlim_j X_j$ for an inverse system of finite complexes $X_j$.
 Let
 $$
 \zeta \: T \to X
 $$
  be the given principal $G$-bundle, where $G$ is of CW type. Let
  $f\: X \to BG$ be a classifying map for $\zeta$. By Proposition \ref{lim-onto},
  we can assume without loss in generality that $f$ factors as
 $$
 X \to X_m \overset{f_m}\to BG
 $$
 for some index $m$.
 For $j > m$, define $f_j \: X_j \to BG$ by taking the composite of $f_m$ with the map $X_j \to X_m$.
 This defines a principal $G$-bundle $\zeta_j\: T_j \to X_j$ for each $j \ge m$.

For each $j$ we have a lifting problem
  $$
\xymatrix{
& EG\times_G G^{\ad} \ar[d]^{\Ad(\zeta)} \\
X_j \ar[r]_{f_j} \ar@{..>}[ur] & BG
}
$$
whose space of sections is just the gauge group $\GG(\zeta_j)$. Furthermore,
one has a direct system of topological groups
$$
\GG(\zeta_m)\to \GG(\zeta_{m+1}) \to \cdots
$$
equipped with compatible homomorphisms $\GG(\zeta_j) \to \GG(\zeta)$.
 By Theorem \ref{thm:lim-section}, the homomorphism
$$
\Dirlim_j \, \pi_n (\GG(\zeta_j)\idc) \to \pi_n (\GG(\zeta)\idc)
$$
is an isomorphism for $n \ge 0$. A similar statement holds in the projective bundle
case. Summarizing, we obtain the following description of the homotopy groups of the
gauge group and of the projective gauge group.

\begin{Pro} \label{Cor:limGG}  Let $X$ be a compact metric space and suppose
$X = \Invlim_j X_j$ for an inverse system of finite complexes $X_j$.
Then, with notation as above,
   $$\pi_*(\GG(\zeta)\idc) \cong \Dirlim\pi_*( \GG(\zeta_j)\idc) \hbox{\, and \,}  \pi_*(\mathcal{P}(\zeta)\idc) \cong \Dirlim  
   \pi_*(\mathcal{P}(\zeta_j)\idc).
   $$
After rationalization, these become isomorphisms of rational Samelson algebras.
\end{Pro}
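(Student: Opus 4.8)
The plan is to treat the two assertions in turn, noting that the first is essentially already in hand from the discussion preceding the statement and that the real content lies in the compatibility with the Samelson bracket.

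For the homotopy-group isomorphisms, I would simply invoke the setup just established: the lifting problems $\mathcal D_j$ and $\mathcal D$ have section spaces $\GG(\zeta_j)$ and $\GG(\zeta)$, so Theorem \ref{thm:lim-section} gives that $\Dirlim_j \pi_n(\GG(\zeta_j)\idc) \to \pi_n(\GG(\zeta)\idc)$ is a bijection for every $n \ge 0$. Since the terms are groups and a direct limit of groups agrees with the direct limit computed in sets (as in the proof of Theorem \ref{thm:Flim}, via \cite[prop.\ IX.2]{MacLane}), this bijection is an isomorphism of groups. The projective case is verbatim the same, with $\mathcal P$, the projective adjoint bundle, and the corresponding lifting problem in place of $\GG$, $\Ad(\zeta)$, and $\mathcal D$.

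For the Samelson structure, the key observation is naturality. By construction the structure maps $\GG(\zeta_j) \to \GG(\zeta_{j+1})$ and the maps to the limit $\GG(\zeta_j) \to \GG(\zeta)$ are homomorphisms of topological groups, being induced by the maps $X_{j+1} \to X_j$ and $X \to X_j$ of the inverse system and respecting pointwise multiplication of sections. Because the Samelson product is induced by the commutator map \cite[chap.\ X.5]{Wh}, a homomorphism of group-like $H$-spaces preserves commutators and so induces a morphism of the associated rational Samelson Lie algebras. Hence $\{\pi_*(\GG(\zeta_j)\idc)\otimes \QQ\}$ is a direct system of graded Lie algebras whose structure maps are Lie homomorphisms, and its colimit in the category of graded Lie algebras has underlying graded vector space $\Dirlim_j (\pi_*(\GG(\zeta_j)\idc)\otimes \QQ)$, with bracket computed by representing two classes at a common finite stage and bracketing there. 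The compatible homomorphisms then assemble into a map of graded Lie algebras
$$
\Dirlim_j \big(\pi_*(\GG(\zeta_j)\idc)\otimes \QQ\big) \,\longrightarrow\, \pi_*(\GG(\zeta)\idc)\otimes \QQ .
$$

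Finally, I would check this map is a bijection, whence an isomorphism of Lie algebras. Tensoring the first-part isomorphism with $\QQ$ and using that $-\otimes \QQ$ commutes with direct limits (the same fact exploited in the proof of Theorem \ref{bigthm:mainB}) shows the underlying graded vector space map is an isomorphism; a bijective Lie algebra homomorphism is automatically a Lie algebra isomorphism, and the identical argument applies to $\mathcal P(\zeta)$. The only step requiring genuine care is the naturality of the Samelson bracket under the homomorphisms of the system—once that is granted, the passage to the limit is formal—so I expect that naturality, together with the routine verification that the structure maps really are group homomorphisms, to be the main (if mild) obstacle.
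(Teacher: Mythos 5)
Your proposal is correct and follows essentially the same route as the paper: the integral isomorphisms come from Theorem \ref{thm:lim-section} applied to the lifting problems (as set up in the preceding discussion), and the Samelson statement follows because all the structure maps are group homomorphisms, hence $H$-maps, hence induce morphisms of rational Samelson Lie algebras, so the bijection from the first part is automatically a Lie algebra isomorphism. The only difference is that you spell out the formal colimit-of-Lie-algebras bookkeeping that the paper's two-sentence proof leaves implicit.
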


\begin{proof} The only thing we need to prove is the last statement. This follows
because the map inducing the isomorphism in each case is induced
from maps of $H$-spaces.
They thus induce isomorphisms of rational Samelson Lie algebras.
\end{proof}

\begin{proof}[{\bf Proof of Theorem \ref{bigthm:mainC}}]
Combining Proposition \ref{Cor:limGG} and the preliminary version of Theorem
\ref{bigthm:mainC} for finite complexes (Theorem \ref{thm:mainC}) with Theorem \ref{bigthm:mainB}  one sees   that    
$\GG(\zeta)\idc$ has rational homotopy groups  given
by Theorem \ref{bigthm:mainB}. Further, since a direct limit of abelian Lie algebras    is   abelian, we conclude 
$\GG(\zeta)\idc$ has abelian rational Samelson Lie algebra.
This, in turn,    implies
there exists an $H$-equivalence $\GG(\zeta)\idc \simeq_\QQ F(X, G)\idc$
by Proposition \ref{Pro:Scheerer}.
\end{proof}

\begin{proof}[{\bf Proof of Theorem \ref{bigthm:mainD}}]
The proof is similar to the preceding one. In this case, one combines
Proposition \ref{Cor:limGG} and the preliminary version of Theorem
\ref{bigthm:mainD} for finite complexes (Theorem \ref{thm:mainD}) to get that
$\mathcal P(\zeta)\idc$ has rational homotopy groups  given
by Theorem \ref{bigthm:mainB}. The rest of the argument is as in  the proof of Theorem
\ref{bigthm:mainC}.
\end{proof}

\begin{proof}[{\bf Proof of Addendum \ref{addC}}]
See Remark \ref{dispenseC}.
\end{proof}

\begin{proof}[{\bf Proof of Theorem \ref{bigthm:mainA}}]  The proof is a direct
consequence of Example \ref{thm:UA proj gauge},  Theorem \ref{bigthm:mainD}  for $G = U(n)$, and
the well-known result
  $$\pi_*(U(n)) \otimes \QQ \,\, \cong \,\, \QQ(s_1,   \ldots, s_n)$$
where $|s_i| = 2i-1$.
\end{proof}


\section{Appendix: on the free loop space}
In this section, we sketch  a proof of  ``Gottlieb's identity'' for the gauge group used in the proof of  Theorem \ref{thm:mainC}.   
While Gottlieb's original proof requires the base space $X$ of the given principal $G$-bundle to be a finite CW complex, our 
proof requires only that the bundle be a pullback of the universal principal $G$-bundle.

Given a space $X$,
let $LX = F(S^1, X)$ be its space of unbased loops.
Evaluating loops at their basepoints  gives a fibration
$LX \to X$.  For a topological group $G$ of CW type, let $\xi\: EG \to BG$
be the universal bundle, and let $\Ad(\xi)\: EG \times_G G^\ad \to BG$
be the associated adjoint bundle.
Then the following result is folklore.

\begin{Lem} \label{lem:LG} Let $G$ be any topological group of CW type. Then there is
a fibrewise homotopy equivalence
$$
    L(BG) \,\,  \simeq  \,\, EG \times_G G^{\ad}
    $$
of fibrewise $H$-spaces over $BG$.
\end{Lem}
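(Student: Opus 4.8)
The plan is to construct the fibrewise equivalence $L(BG) \simeq EG \times_G G^{\ad}$ directly from the bar construction model of the universal bundle, rather than via any finiteness hypothesis. Recall that for a topological group $G$ of CW type one has the standard simplicial model in which $BG = |N_\bullet G|$ is the geometric realization of the nerve and $EG = |N_\bullet(G \rtimes G)|$ (or equivalently $EG$ is the realization of the transport groupoid), with $\xi \colon EG \to BG$ the projection. The key structural fact I would exploit is that $EG \times_G G^{\ad}$ has a clean homotopy-theoretic interpretation: the free loop space $LX = F(S^1,X)$ of \emph{any} space $X$ fits into the pullback square defining it as the homotopy pullback of the diagonal $X \to X \times X$ against itself, and when $X = BG$ the based loop space $\Omega BG \simeq G$ acts on the fibre by conjugation precisely because the holonomy of the universal bundle around a loop is given by conjugation in $G$.

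The steps I would carry out, in order, are as follows. First, I would identify the based loop space $\Omega BG$ with $G$ as a group-like $H$-space (indeed as a topological group under the bar-construction model), so that $LBG \to BG$ is a fibration with fibre $\Omega BG \simeq G$. Second, I would exhibit the monodromy action of $\pi_1$, and more structurally the fibrewise $H$-space structure, on this fibration: going around the loop in $BG$ and transporting via the universal bundle realizes the $G$-action on its own fibre $G$ by conjugation, i.e. the adjoint action $G^{\ad}$. Third, I would produce the comparison map $EG \times_G G^{\ad} \to LBG$ explicitly: a point of $EG \times_G G^{\ad}$ is a pair $(e,g)$ with $e \in EG$ and $g \in G$, modulo the diagonal $G$-action, and I would send it to the loop in $BG$ obtained by projecting the path in $EG$ from $e$ to $eg$ (which exists and is canonical up to the contractibility of $EG$) down to $BG$; this path is a loop precisely because $e$ and $eg$ lie in the same $\xi$-fibre. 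Fourth, I would check that this map covers the identity on $BG$ (both projections send the data to $\xi(e)$) and respects the fibrewise multiplications, both of which are induced by multiplication in $G^{\ad} = \Omega BG$. Fifth, I would verify it is a fibrewise homotopy equivalence by checking it induces a weak equivalence on fibres, namely the identification $G^{\ad} \simeq \Omega BG$, and then invoke the fact that a map of fibrations over a common CW base inducing an equivalence on fibres is a fibrewise homotopy equivalence (here using that both total spaces have CW homotopy type, which follows from $G$ being of CW type and Proposition \ref{Pro:milnor}-type arguments).

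The main obstacle, I expect, is making the comparison map canonical and continuous rather than merely a fibrewise weak equivalence, and in particular handling the fibrewise $H$-space structure coherently. The subtlety is that concatenation of loops in $LBG$ is only associative up to homotopy, so to match it with the strict (or strictly homotopy-coherent) group multiplication on $EG \times_G G^{\ad}$ induced by $G^{\ad}$, I would need to be careful that the $H$-space structures agree up to fibrewise homotopy and not merely pointwise. I would address this by working at the level of the bar construction, where $\Omega BG$ receives a genuine topological monoid (indeed group) structure making $\Omega BG \to G$ a map of $H$-spaces, so that the concatenation product on $LBG$ translates into the pointwise product inherited from $G^{\ad}$; the compatibility with the $BG$-projection is then automatic since everything is defined fibrewise. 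Once this coherence is in place, the equivalence statement follows formally from the fibre-comparison criterion, so the essential content is packaged into the construction of the comparison map and the verification that it is an $H$-map fibrewise.
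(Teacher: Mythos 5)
Your strategy (a direct holonomy-style comparison map plus a Dold fibre-comparison argument) is genuinely different from the paper's, but as written it has a concrete gap at its central step. You send a class $[(e,g)] \in EG \times_G G^{\ad}$ to the projection to $BG$ of ``the path in $EG$ from $e$ to $eg$, which exists and is canonical up to the contractibility of $EG$.'' Contractibility only gives \emph{existence} of such a path, unique up to homotopy rel endpoints; it does not produce a point of $L(BG)$, so this is not yet a map. To get an actual map you need a continuous choice $c(e,g)$ of path from $e$ to $eg$, and, more seriously, for the map to descend to the quotient by the diagonal $G$-action you need the equivariance $c(eh,\,h^{-1}gh) = c(e,g)\cdot h$ (at least after projecting to $BG$). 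No choice manufactured from a contraction of $EG$ can have this property: a $G$-equivariant contraction of $EG$ would force the cone point to be fixed by the free $G$-action, so none exists for nontrivial $G$. This is exactly where the content of the lemma sits, and your proposed fix (passing to the bar construction) is aimed at the $H$-space coherence issue, not at this descent problem. The gap is repairable, but only by a real argument: for instance, in the simplicial model $EG = |N_\bullet\mathcal{G}|$, where $\mathcal{G}$ is the groupoid with object set $G$ and a unique morphism between any two objects, the unique morphism $x \to xg$ defines a natural transformation $\mathrm{id} \Rightarrow R_g$ whose realization gives a path choice that is equivariant in the required sense; alternatively one can build the map in the other direction, $L(BG) \to EG\times_G G^{\ad}$, by holonomy, which requires choosing a $G$-equivariant path-lifting function for the principal fibration $EG \to BG$.

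For comparison, the paper's proof avoids all choices. It lets $G \times G$ act on $G^{\ad}$ by $(g,h)\cdot x = gxh^{-1}$, identifies $E(G\times G)\times_{(G\times G)} G^{\ad}$ with $BG$ in such a way that the fibration $E(G\times G)\times_{(G\times G)} G^{\ad} \to B(G\times G) = BG \times BG$ is a fibrational replacement of the diagonal $B\Delta$, and observes that $EG\times_G G^{\ad}$ is the strict pullback of this fibration along $B\Delta$. Since $L(BG)$ is the strict pullback along $B\Delta$ of the other standard fibrational replacement of the diagonal, namely the path fibration $(BG)^I \to BG\times BG$, the two section-space models are fibrewise homotopy equivalent with no point-level comparison needed. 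Your final steps are not the problem: the Dold-type fibre comparison is legitimate under the CW-type hypotheses in play, and the fibrewise $H$-structure is only claimed compatible up to homotopy in the paper as well. If you want to keep your outline, fix the descent issue by one of the two repairs above; otherwise the double-group trick is the shortest complete route.
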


\begin{proof} Let $G \times G$ act on $G^\ad$ by the rule $(g,h)\cdot x =
gxh^{-1}.$ Then the restriction of
this action to the image of the  diagonal $\Delta: G \to  G \times G$ coincides with the given action of $G$ on $G^{\ad}$. We 
have a pullback square
$$
\xymatrix{E(G \times G) \times_G G^{\ad} \ar[rr]^{E\Delta} \ar[d] &&
E(G\times G) \times_{(G\times G)} G^{\ad} \ar[d] \\
BG = E(G \times G)/G \ar[rr]_{B\Delta} && B(G \times G)}
$$
in which the vertical maps are fibrations and the
horizontal maps are induced by $\Delta.$
The space  $E(G\times G) \times_{(G\times G)} G^{\ad}$ may be identified
with $BG$.  To show this, we first quotient out by the action of the left-hand copy of
$G$ in $G \times G$. Since this action is free, we obtain $EG$. Thus
when we take the quotient by the right-hand copy of $G$ we get $EG/G =
BG.$  It follows that $EG \times_G G^{\ad} =  E(G \times G) \times_G G^{\ad}$ is identified with
the homotopy pullback of the diagonal of $BG$ with itself. But the latter
coincides with the actual pullback of the diagram
$$ \xymatrix{ & (BG)^I \ar[d]^{p} \\
BG \ar[r]_>>>>>{B \Delta} & BG \times BG }$$
where $(BG)^I = F(I, BG)$  is the free path space of $BG$, and $p$ is
the fibration which evaluates a path at its endpoints. This pullback
identically coincides with $L(BG)$.
\end{proof}

\begin{Cor}[\bf ``Gottlieb's Identity'' {\cite[th.\ 1]{Got}}\rm ] \label{cor:LG}
Let $G$ be any topological group of CW type.  Let $\zeta\: T \to X$ be a principal $G$-bundle  induced from the universal 
principal $G$-bundle by   a map $h_\zeta\: X \to BG$.
Then there is a homotopy equivalence of $H$-spaces
$$
\Gamma(\Ad(\zeta)) \,\, \simeq \,\, \Omega_{h_\zeta} F(X,BG)\, ,
$$
where the right side denotes the based loop space of $F(X,BG)$ with loops based at $h_\zeta.$

\end{Cor}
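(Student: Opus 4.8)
The plan is to transport the fibrewise identification supplied by Lemma \ref{lem:LG} to section spaces, and then to recognize the resulting object as a based loop space via the exponential law. First I would observe that, by construction, the adjoint bundle $\Ad(\zeta)$ is the pullback $h_\zeta^*\Ad(\xi)$ of the universal adjoint bundle $\Ad(\xi)\: EG\times_G G^{\ad}\to BG$ along the classifying map $h_\zeta$; here one uses that $\zeta = h_\zeta^*\xi$ together with the natural isomorphism $(X\times_{BG}EG)\times_G G^{\ad}\cong X\times_{BG}(EG\times_G G^{\ad})$. Consequently the gauge group $\Gamma(\Ad(\zeta))$ is exactly the space of lifts of $h_\zeta$ through $\Ad(\xi)$, carrying the topological group structure induced by the fibrewise group structure on $\Ad(\xi)$ via pointwise multiplication of sections.

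Next I would invoke Lemma \ref{lem:LG}, which gives a fibrewise homotopy equivalence $L(BG)\simeq EG\times_G G^{\ad}$ of fibrewise $H$-spaces over $BG$. Pulling this back along $h_\zeta$ produces a fibrewise homotopy equivalence of fibrewise $H$-spaces over $X$. Since a fibrewise homotopy equivalence of Hurewicz fibrations induces a homotopy equivalence on section spaces (compose with a fibrewise homotopy inverse and note that the fibrewise homotopies descend to sections), I obtain a homotopy equivalence
$$
\Gamma(\Ad(\zeta)) \,\,\simeq\,\, \Gamma(h_\zeta^* L(BG))\, .
$$
Because the equivalence of Lemma \ref{lem:LG} respects the fibrewise $H$-space structures, this is an equivalence of $H$-spaces, where the target is given the pointwise structure induced by fibrewise loop concatenation on $L(BG)$.

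It then remains to identify $\Gamma(h_\zeta^* L(BG))$ with $\Omega_{h_\zeta}F(X,BG)$. A section of $h_\zeta^* L(BG)$ is a lift $\tilde h\: X\to F(S^1,BG)$ of $h_\zeta$ through the basepoint-evaluation fibration $\mathrm{ev}\: F(S^1,BG)\to BG$. Applying the exponential law
$$
F\big(X, F(S^1, BG)\big)\,\,\cong\,\, F(S^1, F(X, BG))\, ,
$$
the lifting condition $\mathrm{ev}\circ\tilde h = h_\zeta$ translates into the statement that the corresponding loop $S^1\to F(X,BG)$ carries the basepoint of $S^1$ to $h_\zeta$; that is, it is a based loop in $F(X,BG)$ at $h_\zeta$. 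This yields a homeomorphism $\Gamma(h_\zeta^* L(BG))\cong\Omega_{h_\zeta}F(X,BG)$, and under it the fibrewise loop-concatenation structure becomes the usual loop sum. Assembling the three equivalences gives the desired $H$-equivalence.

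The main obstacle I anticipate is bookkeeping for the $H$-space structures rather than any single hard estimate: one must verify that the pointwise fibrewise group structure on the gauge group is carried, through both the fibrewise equivalence of Lemma \ref{lem:LG} and the exponential-law homeomorphism, to loop concatenation on $\Omega_{h_\zeta}F(X,BG)$. The first transport is guaranteed because Lemma \ref{lem:LG} is an equivalence of fibrewise $H$-spaces; the second is a matter of checking that pointwise (in $X$) concatenation of $BG$-valued loops agrees with concatenation of loops in the function space $F(X,BG)$, which is a careful but routine application of adjunction.
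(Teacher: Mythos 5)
Your proposal is correct and follows essentially the same route as the paper's proof: identify $\Gamma(\Ad(\zeta))$ with the space of lifts of $h_\zeta$ through the universal adjoint bundle, replace that bundle by $L(BG)$ via the fibrewise $H$-equivalence of Lemma \ref{lem:LG}, and then convert lifts of $h_\zeta$ through the evaluation fibration $L(BG)\to BG$ into based loops in $F(X,BG)$ by the exponential law. The only difference is one of exposition: you spell out the pullback identification $\Ad(\zeta)\cong h_\zeta^*\Ad(\xi)$ and the $H$-structure bookkeeping that the paper compresses into ``using Lemma \ref{lem:LG}'' and ``an unraveling of definitions.''
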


\begin{proof}  $\Gamma(\Ad(\zeta))$ coincides with  the space of solutions to the
lifting problem
$$
\xymatrix{
                          & EG\times_G G^\ad  \ar[d] \\
X \ar[r]_{h_\zeta} \ar@{..>}[ur] & BG \, .
}
$$
Denote this lifting problem by $\mathcal D$. Using Lemma \ref{lem:LG}, we see that
$\Gamma(\mathcal D)$ is homotopy equivalent as an $H$-space to the space of lifts
$$
\xymatrix{
                          & L(BG) \ar[d]\\
X \ar[r]_{h_\zeta} \ar@{..>}[ur] & BG \, .
}
$$
An unraveling of definitions shows that the latter
is the space of maps $X \times S^1 \to BG$ whose restriction
to $X \times \ast$ coincides with $h_\zeta$. But this is identical to the space
$\Omega_{h_\zeta} F(X,BG)$ by means of the exponential law.
\end{proof}




\end{document}